\documentclass[a4paper,12pt]{amsart}
\usepackage[utf8]{inputenc}
\usepackage[english]{babel}
\usepackage{amsfonts}
\usepackage{amsmath}
\usepackage{mathrsfs}
\usepackage{amssymb}
\usepackage{amsthm}
\usepackage{amscd}
\usepackage{latexsym}
\usepackage{amstext}
\usepackage{amsxtra}
\usepackage[alphabetic,backrefs,lite]{amsrefs} 
\usepackage{color}

\definecolor{darkgreen}{rgb}{0.0, 0.5, 0.0}

\usepackage[all]{xy}
\usepackage{wasysym}

\usepackage{tikz}
\usetikzlibrary{intersections}

\usepackage{geometry}
\usepackage{enumerate}
\usepackage{wasysym}
\usepackage{latexsym} 
\usepackage{colonequals}
\usepackage{ulem}

\usepackage{nicematrix}
\usepackage{tikz}

\usepackage[
        draft=false,
        colorlinks, citecolor=darkgreen,
        backref,       
        pdfauthor={D. Bricalli},
        pdftitle={Linear spaces in Hessian loci of cubic hypersurfaces},
        linktocpage        
]{hyperref}
\hypersetup{citecolor=blue,linktocpage}

\usepackage{dsfont}

\newtheorem{theorem}{Theorem}[section]
\newtheorem{lemma}[theorem]{Lemma}

\newtheorem*{theorem*}{Theorem}
\newtheorem*{conjecture*}{Conjecture}
\newtheorem*{THA}{Theorem A}
\newtheorem*{THB}{Theorem B}
\newtheorem*{THC}{Theorem C}
\newtheorem*{THD}{Theorem D}
\newtheorem{corollary}[theorem]{Corollary}
\newtheorem{proposition}[theorem]{Proposition}
\newtheorem{conjecture}[theorem]{Conjecture}

\newtheorem*{configuration}{Configuration for n=4}

\theoremstyle{definition}
\newtheorem{definition}[theorem]{Definition}

\theoremstyle{remark}
\newtheorem{remark}[theorem]{Remark}

\numberwithin{equation}{section}

\newcommand{\nc}{\newcommand} 
\newcommand{\cA}{\mathcal{A}}

\newcommand{\cB}{\mathcal{B}}

\newcommand{\cC}{\mathcal{C}}
\newcommand{\bC}{\mathbb{C}}
\newcommand{\cD}{\mathcal{D}}

\newcommand{\cH}{\mathcal{H}}

\newcommand{\bK}{\mathbb{K}}
\newcommand{\cL}{\mathcal{L}}

\newcommand{\cP}{\mathcal{P}}
\newcommand{\bP}{\mathbb{P}}
\newcommand{\cQ}{\mathcal{Q}}

\newcommand{\cU}{\mathcal{U}}

\newcommand{\cV}{\mathcal{V}}

\newcommand{\cW}{\mathcal{W}}

\nc{\fr}{{\rightarrow}}
\nc{\co}{{\nabla}}

\nc{\cu}{{\barline{\nabla}}}

\usepackage{mathtools}

\DeclareMathOperator{\rk}{rk}

\DeclareMathOperator{\Ann}{Ann}

\DeclareMathOperator{\Rank}{Rank}
\DeclareMathOperator{\Sing}{Sing}

\DeclareMathOperator{\Bs}{Bs}

\DeclareMathOperator{\diag}{diag}

\DeclareMathOperator{\Sym}{Sym}

\usepackage{tcolorbox}
\newtcolorbox{mybox}{colback=blue!5!white,colframe=blue!75!black}

\begin{document}
\title[Linear spaces in Hessian loci of cubic hypersurfaces]{Linear spaces in Hessian loci of cubic hypersurfaces}

\thanks{
\textit{Keywords}: Hessian varieties, hessian map, cubic hypersurfaces, cones. 
\\
\noindent {\bf Acknowledgements}: \\
The author wants to express his gratitude to Gian Pietro Pirola, for helpful discussion on the topics of this paper and for a remarkable help for the proof of the theorem presented in the last section. The author would also like to thank Filippo Favale for several stimulating and helpful discussions on these topics and for a careful preliminary reading of this paper.  The author is supported by INdAM - GNSAGA}

\subjclass[2020]{Primary: 14M12; Secondary: 14J70, 14E05, 13E10, 14J30}

\author{Davide Bricalli}
\address{Dipartimento di Matematica,
	Universit\`a degli Studi di Pavia,
	Via Ferrata, 5
	I-27100 Pavia, Italy}
\email{davide.bricalli@unipv.it \ bricalli@altamatematica.it}

\maketitle

\begin{abstract}
    In this paper we will study the Hessian hypersurface associated with a smooth cubic. We prove that the existence of a Hessian locus, associated with a smooth cubic form f, of dimension bigger then the expected one forces the cubic f to be of Thom-Sebastiani type. Moreover, we will analyze the existence of some projective linear spaces in such Hessian loci and their nature in terms of the Hessian matrix. From this, we show that the only smooth cubic threefold having the same Hessian variety as the one associated with a general cubic form $f$ of Waring Rank $6$ is $f$ itself. Finally, we prove that the hessian associated with a smooth hypersurface of any degree and dimension is not a cone.
\end{abstract}

\section*{Introduction}
\label{SEC:Intro}
Given an algebraically closed field $\bK$ of characteristic $0$ and a homogeneous polynomial $f\in S_n:=\bK[x_0,\dots,x_n]=\bigoplus_{d\geq0}S^d_n$, one can naturally define the Hessian matrix $H_f$ of $f$ and its hessian polynomial $h_f$, i.e. the determinant of the Hessian matrix $\det(H_f)$. The polynomials having a vanishing hessian have been deeply studied, starting with \cite{Hes59} and \cite{GN}, or more recently by \cite{Rus},\cite{Wa20},\cite{GR},\cite{GR09},\cite{BFP} and others. In the case where $h_f$ is the zero polynomial we will say that $f$ lives in the Gordan-Noether locus $\cC_{GN}$ (the name is in honor of Gordan and Noether and their work \cite{GN}). In this paper, we will focus on the case of smooth cubic hypersurfaces: it is well known that if $V(f)\subset\bP^n$ is a smooth hypersurface, then $f\not\in\cC_{GN}$ and $h_f$ is a homogeneous polynomial of degree $(n+1)(d-2)$.\\
In this paper, we will deal with the case where $d=3$, and in particular with hessian loci associated with a smooth cubic hypersurface $V(f)\subset\bP^n$. Together with the Hessian hypersurface $\cH_f:=V(h_f)\subset\bP^n$, defined as the zero locus of the hessian polynomial $h_f$, one can define other hessian loci
$$\cD_k(f):=\{[x]\in\bP^n \ | \ \rk(H_f(x))\leq k\}\subseteq\bP^n.$$
After the classical studies dealing with plane elliptic curves and their hessians (which are plane cubic curves and generically smooth), a lot of work has been done for higher dimensional cases. For example the Hessian quartic surface associated with a general smooth cubic surface has been studied by \cite{Seg}, \cite{Hut} or more recently by \cite{DVG}, for example. Adler, in \cite{AR}, dealt with the case of cubic threefolds and their associated hessian varieties, while the hessian hypersurface associated with a general smooth cubic fourfold has been analyzed and described (especially in terms of its singularities, as Adler did in the case of threefolds) in \cite{BFP2}, where the authors proved, among other things, that for {\it{any}} homogeneous polynomial $f$ of degree $3$, defining a smooth hypersurface $V(f)\subset\bP^n$, one has
    $$\Sing(\cH_f)=\cD_{n-1}(f), \quad \dim(\Sing(\cH_f))\geq n-3, \quad \mbox{and} \quad \dim(\cD_k(f))\leq k-1.$$
It then follows that for a smooth cubic hypersurface $V(f)$ the singular locus of $\cH_f$ has either dimension $n-3$ or $n-2$, where the former case is realized for $f$ general. It was then natural to ask ourselves whether it is possible to characterize those cubic forms $f$, such that $\dim(\Sing(\cH_f))=n-2$, and in \cite{BFP3}, the authors proved that if $V(f)\subset\bP^n$ for $n\leq5$ is a smooth cubic hypersurface then 
    $$\dim(\Sing(\cH_f))=n-2 \qquad \iff \qquad f \ \mbox{is of Thom-Sebastiani type}.$$
By saying that $f$ is of Thom-Sebastiani type (see the Definition \ref{DEF:TS} in Section \ref{SEC:preliminaries}), we mean that, up to a linear change of coordinates, it can be written as sum of two polynomials depending on disjoint sets of variables. The name comes from the work \cite{TS} of Sebastiani and Thom and \cite{Tho} of Thom.\\
On the other hand, in \cite{BFGre}, it has been show that $f$ has to be of Thom-Sebastiani type, also in the case where the Hessian locus $\cD_1(f)$ is not empty. In other words, $f$ is forced to be of Thom-Sebastiani type, when $\cD_k(f)$ has the maximal allowed dimension, for $k=1$ or $k=n-1$. Here, we ask ourselves whether the same result holds also in the case where another hessian locus $\cD_k(f)$, for $k\neq1,n-1$, has dimension $k-1$. We prove the following:
\begin{THA}
    Let $X=V(f)\subset\bP^n$ be a smooth cubic hypersurface with $n\geq3$. Then 
    $$\dim(\cD_2(f))=1 \quad \mbox{implies that} \quad \mbox{either } f \mbox{ is of Thom-Sebastiani type or } \dim(\cD_{n-1}(f))=n-2.$$
    In particular, for $n\leq5$, if $\dim(\cD_2(f))=1$, then $f$ is of Thom-Sebastiani type.
\end{THA}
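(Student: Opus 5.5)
Since $\dim(\cD_2(f))\leq 1$ always holds by the bound recalled from \cite{BFP2}, the hypothesis says that $\cD_2(f)$ has an irreducible component $C$ which is a curve. The plan is to show that $C$ must be a line $L\subset\bP^n$, and then to put $H_f$ restricted to $L$ into normal form. For $[x]\in\cD_2(f)$ the quadric $Q_x=\sum_i x_i\partial_i f$ has rank $\leq 2$, so it is a union of two hyperplanes or a double hyperplane. The key tool is the symmetry identity for cubics, $H_f(x)\,y=H_f(y)\,x$, i.e. the polarity $(x,y)\mapsto H_f(x)y$. Take two general points $x,y\in C$. Since $\ker H_f(x)$ has codimension $\leq 2$, there are many $z$ with $H_f(x)z=0$. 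I would use a tangency argument: differentiating along $C$, using that the minors of order $3$ vanish identically, should show that the tangent direction $\dot x$ satisfies $H_f(\dot x)$ having image contained in $\operatorname{Im}H_f(x)$. Integrating this shows that the span of $C$ meets the rank-$\leq 2$ condition linearly. From this I expect to conclude that $C$ spans a line $L$, with $H_f(\lambda x+\mu y)$ of rank $\leq 2$ for every point of $L$ and images all contained in a fixed $2$-dimensional space $W=\operatorname{Im}H_f(x)+\operatorname{Im}H_f(y)$. If $C$ were not a line, I would pick three non-collinear points of $C$ and use the identity to produce a plane inside $\cD_2(f)$, contradicting $\dim\le 1$.

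Next, choose coordinates with $L=\{x_2=\dots=x_n=0\}$. The condition that $H_f(p)$ for $p\in L$ has rank $\le 2$ with image in a fixed $W$ means that the matrices $\partial_0 H_f$ and $\partial_1 H_f$, i.e. the Hessians of $\partial_0 f$ and $\partial_1 f$, are supported on a common $2\times2$ block. By the symmetry of third derivatives this should force, after a change of coordinates, $W=\langle e_0,e_1\rangle$, so that $\partial_0 f$ and $\partial_1 f$ depend only on $x_0,x_1$ modulo terms quadratic in $x_2,\dots,x_n$. I then split into cases according to whether the binary cubic part $g(x_0,x_1)$ interacts with the remaining variables. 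Writing
$$f=g(x_0,x_1)+x_0q_0(x')+x_1q_1(x')+r(x'),\qquad x'=(x_2,\dots,x_n),$$
the vanishing of the mixed third derivatives $\partial_0\partial_j\partial_k f$ for $j,k\ge 2$ gives $q_0=q_1=0$. Hence $f=g(x_0,x_1)+r(x')$, which is of Thom-Sebastiani type.

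The main obstacle is the degenerate situation in which the images of $H_f$ along $C$ do not stay in a fixed plane, for instance when $W$ contains a direction lying in $L$ itself, or when $C$ is not a line and the argument above fails. I expect this case to give instead a family of points of rank $\le n-1$: the kernels of the matrices along the curve should sweep out a locus of dimension $n-2$ on which $h_f$ vanishes to order $\ge 2$, that is, $\dim\cD_{n-1}(f)=n-2$. In this case I would try to exhibit $\cD_{n-1}(f)$ as the union of the linear spaces $\mathbb{P}(\ker H_f(x))$ for $x\in C$, suitably intersected, and show that they are contained in $\Sing(\cH_f)=\cD_{n-1}(f)$. Finally, for $n\le 5$, the result of \cite{BFP3} that $\dim\cD_{n-1}(f)=n-2$ implies $f$ is of Thom-Sebastiani type turns the second alternative into the first, which gives the last assertion.
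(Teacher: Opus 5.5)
There is a genuine gap. Your argument rests on the claim that a one-dimensional component $C$ of $\cD_2(f)$ must be a line, and that claim is false.

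Take $f=f_1(x_0,x_1,x_2)+f_2(x_3,x_4,x_5)$ with $f_1,f_2$ smooth plane cubics not of Fermat type. Then $H_f$ is block diagonal and $\cD_1(f_i)=\emptyset$. So $\cD_2(f)$ is the union of the two Hessian curves $\cH_{f_1}\subset\{x_3=x_4=x_5=0\}$ and $\cH_{f_2}\subset\{x_0=x_1=x_2=0\}$. These are smooth plane cubics, so $\dim\cD_2(f)=1$ and no component is a line.

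Your supporting steps fail on this example. Three non-collinear points of $\cH_{f_1}$ span the plane $\{x_3=x_4=x_5=0\}$, whose general point has rank $3$. Rank $\le 2$ is not preserved under linear combinations, so the ``three points give a plane in $\cD_2(f)$'' step has no basis. Likewise, the first-order condition for staying in the rank $\le 2$ locus only says that $H_f(\dot x)$ vanishes as a bilinear form on $\ker H_f(x)$, i.e. $H_f(\dot x)(\ker H_f(x))\subseteq\mathrm{Im}\,H_f(x)$. It does not give $\mathrm{Im}\,H_f(\dot x)\subseteq\mathrm{Im}\,H_f(x)$. Also, when $C$ is a line, Theorem \ref{THM:charactTS} with $k=1$ already gives TS, so your normal-form computation is not needed.

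The whole content of the theorem therefore lies in what you call the degenerate case: $C\not\simeq\bP^1$, with $f$ not TS and hence $\cD_1(f)=\emptyset$ by Proposition \ref{PROP:Grenoble}. There your sketch is not a proof, and its mechanism is wrong. The kernels $\Pi_P=\iota_f(P)\simeq\bP^{n-2}$, $P\in C$, are \emph{not} contained in $\cD_{n-1}(f)$, since otherwise $f$ would be TS by Theorem \ref{THM:charactTS}.

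The missing idea is to work inside each $\Pi_P$:
\begin{enumerate}
\item By symmetry $P\in\iota_f(R)$ for every $R\in\Pi_P$. So $H_f|_{\Pi_P}$ has a zero row and column, and a complementary $n\times n$ block $M_P$ with $\det M_P\not\equiv 0$.
\item Hence $Z_P=V(\det M_P)$ is an $(n-3)$-dimensional subset of $\cD_{n-1}(f)$.
\item If $Z_P$ varies with $P$, the union of the $Z_P$ has dimension $n-2$, which gives $\dim\cD_{n-1}(f)=n-2$.
\item If $Z_P$ is constant, it lies in two distinct $(n-2)$-planes, so it is a $\bP^{n-3}$. (The planes are distinct because $\Pi_{P_1}=\Pi_{P_2}$ would put an $(n-2)$-plane in $\cD_{n-1}(f)$.) Each point $Q$ of it has $\iota_f(Q)\supseteq\langle C\rangle$, which has dimension $\ge 2$ precisely because $C$ is not a line. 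This puts a $\bP^{n-3}$ inside $\cD_{n-2}(f)$, forcing TS by Theorem \ref{THM:charactTS}.
\end{enumerate}
Note that the non-line hypothesis is used positively here, which is the opposite of your strategy of ruling it out.

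Your final step for $n\le 5$ via Theorem \ref{THM:TS} is fine. The case $n=3$ is trivial, since there $\cD_2=\cD_{n-1}$.
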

Let us stress that the dichotomy presented in the thesis of the first statement is not at all strict. Indeed, one can easily see that if $f$ is of Thom-Sebastiani type, then $\dim(\Sing(\cH_f))=n-2$, and we do actually expect that the converse holds too, for smooth cubic hypersurfaces in every dimension, as proved till the case of cubic fourfolds. The proof essentially relies on the study of particular linear varieties contained in the hessian hypersurfaces and arising as projectivization of the kernel $\iota_f(P):=\bP(\ker(H_f(P))$ with $P\in\cD_2(f)$. More generally, it has been shown (see for example \cite{BFP3}) that the existence of linear subspaces in suitable hessian loci $\cD_k(f)$ leads to remarkable consequences in terms of $f$ itself, as that to be of Thom-Sebastiani type (see Theorem \ref{THM:charactTS}, recalled in Section \ref{SEC:preliminaries}).\\
This fact brought us to a deep study of particular linear subspaces in the Hessian variety: in this paper we analyze the intrinsic nature of $(n-2)$-planes contained in a Hessian variety $\cH_f\subset\bP^n$ associated to a smooth cubic hypersurface $V(f)$. It turns out that, except for a family of cubic forms, which can be explicitly written down, an $(n-2)$-plane contained in $\cH_f$ is necessarily of the form $\iota_f(P)$ for some $P\in\cH_f$:
\begin{THB}
    Let $X=V(f)\subset\bP^n$ be a smooth cubic hypersurface defined by a polynomial $f$ not of Thom-Sebastiani type. Assume there exists $\Pi\simeq\bP^{n-2}$ contained in the Hessian locus $\cH_f$. Then either there exists a point $P\in\cH_f$ such that $\Pi=\iota_f(P)$ or, up to a suitable change of coordinates, $f$ is an element of the explicit linear system $(\star)$.
\end{THB}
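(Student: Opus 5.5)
We work in coordinates adapted to $\Pi$. After a linear change of coordinates we may take $\Pi=V(x_{n-1},x_n)$. For a cubic, $H_f(x)=\sum_i x_i A_i$, where $A_i=H(\partial_i f)$ is a constant symmetric matrix, namely the Hessian of the quadric $\partial_i f$. The hypothesis $\Pi\subset\cH_f$ therefore says that the linear system of symmetric matrices $\langle A_0,\dots,A_{n-2}\rangle$ is entirely singular. Note that $\Pi=\iota_f(P)$ means $H_f(P)\,y=0$ for every $y\in\Pi$. By symmetry of the third derivatives this is equivalent to $H_f(y)\,P=0$ for all $y\in\Pi$, i.e. to $A_iP=0$ for $i=0,\dots,n-2$. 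So the first alternative in Theorem B holds exactly when the matrices $A_0,\dots,A_{n-2}$ have a \emph{common kernel vector} $P$. Such a $P$ then automatically lies in $\cH_f$, since $\iota_f(P)$ has dimension $n-2$, so $\rk H_f(P)\le 2$ and $P\in\cD_2(f)\subset\cH_f$.

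The core of the argument is a structure result for linear spaces of singular symmetric matrices. A general element $M=\sum_{i\le n-2}y_iA_i$ has corank $\ge 1$. Choose a kernel vector $v(y)$ depending rationally on $y$ (adjugate columns). Differentiating $M(y)v(y)=0$ gives $A_j v+M\,\partial_j v=0$. Pairing with $v$ and using symmetry yields $v^tA_jv=0$ for all $j$. Since $v^tA_jv$ is the second derivative of $\partial_j f$ in direction $v$, which by symmetry of third derivatives equals $\partial^2_v f$ evaluated along $e_j$, we get $H_f(v)\,e_j\perp v$, and more intrinsically the relation $v^t H_f(y) v = 0$ on $\Pi$.

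Now split into cases. \textbf{Case 1:} the kernel vector $v(y)$ is constant in $y$. Then $v=P$ is a common kernel vector, and we conclude $\Pi=\iota_f(P)$. \textbf{Case 2:} $v(y)$ genuinely varies. Here I would use the classical classification of linear spaces of singular symmetric matrices (Meshulam / Fillmore–Laurie–Radjavi type results). Such a space of dimension $n-1$ inside $(n+1)\times(n+1)$ symmetric matrices without common kernel must be compression-like: there is a subspace $U$ with $\dim U=r$ such that every matrix of the system maps $U$ into a subspace of dimension $<r$. Equivalently, in suitable coordinates all matrices have a zero block of size $r\times(n+2-r)$.

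Translating these block conditions back via $A_i=H(\partial_i f)$ gives vanishing conditions on the coefficients of $f$. They say that certain monomials $x_ix_jx_k$, with $i\le n-2$ and $j,k$ in prescribed index blocks, do not appear in $f$. We then impose the remaining constraints. Smoothness of $V(f)$ excludes the degenerate block sizes: a large zero block forces $f$ to be singular along a linear space. The hypothesis that $f$ is not of Thom–Sebastiani type excludes splittings where the block structure separates variables, using Theorem \ref{THM:charactTS} when the block structure yields a linear space in $\cD_k(f)$ of maximal dimension. What survives should be a single explicit block shape, which defines the linear system $(\star)$.

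The main obstacle is Case 2. It requires a clean enough classification of $(n-1)$-dimensional spaces of singular symmetric matrices without common kernel, along with the bookkeeping that turns each block shape either into a contradiction (via smoothness or Thom–Sebastiani) or into $(\star)$. I would first try the rational kernel map $y\mapsto v(y)$ and study its image. If the image spans a space $W$, the relations $A_jv=-M\partial_jv$ should give exactly the compression structure with $U=W$. This avoids quoting the general classification and keeps $\dim W$ small enough to enumerate the cases.
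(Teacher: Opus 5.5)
Your reduction of the first alternative to a common kernel vector of $A_0,\dots,A_{n-2}$ is correct. So is the identity $v^tA_jv=0$, which is exactly the paper's Bertini step: the vertex $[v(y)]$ lies in the base locus of the linear system of quadrics $\Pi\subset\bP(J^2_f)$. Two small repairs are needed there:
\begin{itemize}
\item A common kernel vector only gives $\Pi\subseteq\iota_f(P)$. Equality needs the non-TS hypothesis to exclude $\rk H_f(P)=1$ (Proposition \ref{PROP:Grenoble}).
\item The adjugate yields a kernel vector only if $\Pi\not\subseteq\cD_{n-1}(f)$, which again comes from Theorem \ref{THM:charactTS}.
\end{itemize}

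The genuine gap is Case 2, which is the whole content of the theorem.

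\textbf{The classification you quote is not available.} The structure theorems you have in mind concern spaces of singular matrices of (near-)maximal dimension. A linear space of singular symmetric matrices with no common kernel need not be a compression space, even in your dimension. Take $n=5$ and the $4$-dimensional space of matrices $\left(\begin{smallmatrix} tI & B\\ B^t & 0\end{smallmatrix}\right)$, with $t\in\bK$ and $B$ the skew matrix of $w\mapsto b\times w$. Each member kills $(0,b)$, there is no common kernel, and one checks that no subspace $U$ is sent by all members into a fixed space of dimension $<\dim U$.

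\textbf{Your fallback does not close it either.} The relation $A_jv=-M\,\partial_jv$ only says that $A_jv(y)$ lies in $\mathrm{Im}\,M(y)$. That is a hyperplane moving with $y$, so it gives no fixed target space. Phrases like ``smoothness excludes degenerate block sizes'' and ``what survives should be $(\star)$'' are statements of hope, not arguments.

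\textbf{What the paper uses instead.} The structure has to be extracted from $f$, and the paper does it with four inputs you do not have:
\begin{enumerate}
\item[(i)] $\cV=\overline{\mathrm{Im}\,\varphi}\subseteq\Bs(\Pi)$, and $\Bs(\Pi)$ is a curve because $J_f$ is generated by a regular sequence. Hence $\cV$ is a point or a curve; this is where smoothness really enters.
\item[(ii)] By symmetry, the fibres of the kernel map are the hyperplanes $Y_A=\iota_f(A)\cap\Pi$ of $\Pi$. These all contain a fixed $(n-4)$-plane $\Lambda$, since otherwise $\Pi\subseteq\cD_{n-1}(f)$. As $\iota_f(z)\supseteq\cV$ for $z\in\Lambda$, this forces $\cV$ to be a line (Lemma \ref{LEM:Vline}).
\item[(iii)] $\cV\cap\Pi$ is exactly one point. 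If $\cV\subseteq\Pi$, a correspondence in $\cV\times\cV$ meets the diagonal, contradicting \eqref{EQ:notdiagonal}. If they are disjoint, Gorenstein duality in $\cA_f$ forces $\cV\subseteq\cD_2(f)$, hence TS.
\item[(iv)] Coordinates adapted to $\Lambda$, $R=\cV\cap\Pi$ and $T=\cV\cap\iota_f(R)$, together with the $\mu(t)$ relation, give the vanishings in $\cA_f$ that cut out $(\star)$.
\end{enumerate}

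Your guess about the shape of Case 2 is in fact right. For $(\star)$ and $\Pi=\{x_{n-2}=x_n=0\}$, every $H_f(y)$ with $y\in\Pi$ maps $\langle e_{n-2},e_{n-1}\rangle=\mathrm{span}(\cV)$ into $\langle e_n\rangle$. But steps (ii) and (iii) are precisely what proves this, and your proposal does not contain them.
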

The linear system $(\star)$ refers to that described in Equation \eqref{EQ:linearsystem} in Section \ref{SEC:linearspaces} (Theorem \ref{THM:linearspaces}).\\

We will use Theorem B to give an evidence for a conjecture of Ciliberto and Ottaviani dealing with the birationality of the hessian map. Such a map is the natural map which associates to a homogeneous polynomial $f$ of degree $d\geq3$ in $n+1$ variables its hessian polynomial $h_f$, i.e.
$$h_{d,n}:\bP(S^d_n)\dashrightarrow\bP(S^{(d-2)(n+1)}_n).$$
In \cite{CO}, Ciliberto and Ottaviani conjectured that the hessian map $h_{d,n}$ is birational onto its image for every $d\geq3$ and for every $n\geq2$, except for the case where $(d,n)=(3,2)$. It's indeed known that in the case of cubic plane curves the hessian map is generically $3:1$. In \cite{CO}, the authors proved that the hessian map is generically finite (for $d\geq3$ and $n\geq2$) and proved their conjecture for the case of cubic surfaces in $\bP^3$. The case of plane curves has then been analyzed in \cite{Beo} and \cite{COCD}, where the conjecture is proved for $n=2$ and $d\neq5$. Here, by using Theorem B, we prove the following:
\begin{THC}
    Let $[f]\in\bP(S^3_4)$ be a general cubic form of Waring Rank $6$, then
    $$h_{{3,4}_{|\cU}}^{-1}(h_{{3,4}_{|\cU}}(f))=\{[f]\},$$
    where $\cU$ denotes the subset of $\bP(S^3_4)$ of cubic forms defining smooth hypersurfaces in $\bP^4$.
\end{THC}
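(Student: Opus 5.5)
The plan is to reduce Theorem C to Theorem B by exhibiting $(n-2)$-planes, here planes $\bP^2\subset\bP^4$, in $\cH_f$ which are recognisably of the form $\iota_f(P)$. We then show that these planes determine $f$ up to scalar.

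\emph{Normal form and kernels.} For $f$ general of Waring rank $6$ in five variables, we may write $f=\sum_{i=0}^4 x_i^3+l^3$, where $l=\sum a_ix_i$ has general coefficients. A direct check shows that $f$ is not of Thom--Sebastiani type. The Hessian matrix is, up to a scalar, $H_f(x)=\diag(x_0,\dots,x_4)+l(x)\,a a^T$. Consider the line $L_{ij}=\{x_k=0,\ k\neq i,j\}$. On the locus $\{x_i=x_j=0\}$, every row of $H_f$ with index $i$ or $j$ is a multiple of $a^T$, so the rank drops. More intrinsically, take $P=e_i$. Then $H_f(e_i)=\diag(e_i)+a_i\,aa^T$ has rank $2$, so $e_i\in\cD_2(f)$, and $\iota_f(e_i)=\bP(\ker H_f(e_i))$ is a plane $\{x_i=0,\ a\cdot x=0\}\cong\bP^2$. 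This plane lies in $\cH_f$: one checks that $h_f=\prod x_k+l\sum_k a_k^2\prod_{m\neq k}x_m$ vanishes there. In the same way, the points where $l$ vanishes together with three coordinates give further rank-$2$ points, producing six distinguished planes $\Pi_0,\dots,\Pi_5$, one for each summand of the Waring decomposition.

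\emph{Recovery from the Hessian.} Let $g$ be smooth with $h_g=\lambda h_f$. We want to show $[g]=[f]$.
\begin{enumerate}
\item Each $\Pi_k$ is a $\bP^2$ contained in $\cH_g$.
\item If $g$ were of Thom--Sebastiani type, then $\dim\Sing(\cH_g)=2$, as recalled in the Introduction. But $\Sing(\cH_g)=\Sing(\cH_f)$, which has dimension $1$ for general $f$. Hence $g$ is not of Thom--Sebastiani type.
\item By Theorem B, either $g$ belongs to the linear system $(\star)$, or $\Pi_k=\iota_g(P_k)$ for some $P_k\in\cH_g$.
\item The first alternative is excluded by a dimension or genericity argument: $(\star)$ is a proper explicit family, and the Hessians of its members form a proper subvariety of the image of $h_{3,4}$, which does not contain $h_f$ for general $f$. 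We must check that this subvariety is proper, for instance by exhibiting a feature of $\cH_f$ that those Hessians lack.
\item So $\Pi_k=\iota_g(P_k)$. Because $\iota_g(P_k)$ is a plane, $\rk H_g(P_k)=2$.
\end{enumerate}

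\emph{From kernels to $g$.} The kernel condition $H_g(P_k)\,v=0$ for all $v\in\Pi_k$ can be rewritten as $\partial_v\partial_w g(P_k)$-type vanishings. By the symmetry $H_g(P)v=H_g(v)P$, it becomes: for every $v\in\Pi_k$, the point $P_k$ lies in $\ker H_g(v)$. Equivalently, $\partial_{P_k}g$ restricted to the hyperplane spanned by $\Pi_k$ and an extra direction has a specific form. The goal is to show that the polar quadrics $\partial_{P_k}g$ are forced to be rank-$2$ quadrics vanishing doubly along $\Pi_k$, and therefore that $\partial_{P_k}g$ is a product of two linear forms cutting out $\Pi_k$. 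Six such conditions, with the $P_k$ pinned down as the unique rank-$2$ points of $\cD_2(g)=\cD_2(f)$ whose kernel is $\Pi_k$, impose enough linear conditions on $g\in S^3_4$ (a $35$-dimensional space) to force $g\in\langle x_0^3,\dots,x_4^3,l^3\rangle$. This would be established by a rank computation at a specific $a$ plus semicontinuity. Finally, within this $6$-dimensional span, $g=\sum c_kx_k^3+c_5l^3$, and comparing $h_g=\lambda h_f$ coefficientwise, for example on the monomials $\prod_{m\neq k}x_m\cdot x_k$, gives that all the $c_k$ are equal. Hence $[g]=[f]$.

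The main obstacle is step 4 together with the final linear-algebra rank count. First, excluding the linear system $(\star)$ requires knowing its explicit shape from Theorem \ref{THM:linearspaces}. Second, the identification of $P_k$ needs care, since $\cD_2$ depends only on $h$ through singularities of $\cH$ only indirectly. There I would first try to use that $\cD_{2}$ and $\cD_3$ are determined by the singular stratification of $\cH_g=\cH_f$.
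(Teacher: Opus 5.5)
Your two ends are sound: applying Theorem B to planes of $\cH_f$, and the closing comparison of $h_g$ with $h_f$ inside $\langle x_0^3,\dots,x_4^3,L^3\rangle$, which is a valid elementary replacement for citing Theorem \ref{THM:WRn+2}. But the step that carries most of the weight in the paper is missing.

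Theorem B only tells you that each special plane $V(L_m,L_n)$ equals $\iota_g(P)$ for \emph{some} $P\in\cD_2(g)$. There are fifteen such planes, one for each \emph{pair} of Waring summands, not six. Your linear conditions on $g$ require knowing \emph{which} $P$. The phrase ``$P_k$ pinned down as the unique rank-$2$ point whose kernel is $\Pi_k$'' presupposes the correspondence $\iota_g$ you are trying to determine.

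Showing $\cD_2(g)=\cD_2(f)$ already needs an argument. Corollary \ref{COR:constantlinesforiotag} supplies it: the fifteen planes give fifteen distinct preimage points, since $\cD_1(g)=\emptyset$, and these lie inside $\Sing(\Sing(\cH))$, which has exactly fifteen points. Even after that, the matching of points to planes is not forced by formal constraints. The bijection obtained from $\iota_f$ by relabelling through a transposition of two of the six forms $L_i$ has three properties:
\begin{itemize}
\item it is compatible with the symmetry $Q\in\iota_g(P)\Leftrightarrow P\in\iota_g(Q)$;
\item it satisfies $P\notin\iota_g(P)$;
\item it sends the three special points of each singular line to three planes sharing a line.
\end{itemize}
The paper rules such matchings out geometrically in Step 2. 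Lemma \ref{LEM:keyforinvolution} shows that for a special point $P$ on a special line $r$ one has $\iota_g(P)\cap r=\emptyset$. It does so by deriving $g=\sum\alpha_kx_k^3+\lambda Lx_4^2$ from the opposite configuration and finding a nonzero $3\times3$ minor of $H_g(Q_{04})$. Since exactly three special planes miss a given special line, this gives $\iota_g=\iota_f$ on $\Sing(\cH)$ (Proposition \ref{PROP:sameinvolution}). Only then are the conditions $\iota_g(P_i)=V(x_i,L)$ available. These do force $g$ into the span of $x_0^3,\dots,x_4^3,L^3$ (Lemma \ref{LEM:gisWR}), after which your coefficient comparison finishes.

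Your step 4 is also not an argument as it stands. The form $f$ is general only in $\cW_6$, a proper subvariety of $\bP(S^3_4)$. So knowing that Hessians of the $(\star)$-family form a proper subvariety of the image of $h_{3,4}$ does not stop that subvariety from containing $h_{3,4}(\cW_6)$. You need a concrete feature of $\cH_f$ that those Hessians lack, and the paper supplies one. For $\tilde f\in(\star)$ not of TS type, the plane $\{x_2=x_4=0\}\subset\cH_{\tilde f}$ meets the singular locus in only two concurrent lines (Lemma \ref{LEM:bruttaftilde}). By contrast, two meeting singular lines of $\cH_f$ span some $V(L_i,L_j)$, which contains four singular lines (Corollary \ref{COR:gnotastildef}).
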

By saying that $f$ is of Waring Rank $k$, we mean that, up to a change of coordinates, it can be written as $f=\sum_{i=1}^kL_k^3$, where the $L_i$'s are linear forms (see Definition \ref{DEF:WR}). In \cite{CO}[Theorem 6.5], the authors proved that the hessian map $h_{d,n}$ restricted to the locus of forms of Waring Rank $n+2$ is birational onto its image. We enlarge this fact in the case of cubic threefolds: the only smooth cubic threefold having the same Hessian variety as the one associated with a general cubic form $f$ of Waring Rank $6$ is $f$ itself. Probably, this is the same example of a reconstruction already known by Ciliberto and Ottaviani, as the said at the end of their paper \cite{CO}.

The techniques used in the proofs of Theorems B and C are essentially based on two natural identifications of the projective space $\bP^n$. Indeed, one can prove (see for example \cite{BFP2}) that given a smooth cubic hypersurface $V(f)\subset\bP^n$, one can think of the projective space $\bP^n$ as the projectivization $\bP(J^2_f)$, of the Jacobian ideal of $f$, in degree $2$. In this way, a linear space contained for example in the Hessian variety $\cH_f$ can be seen as a linear system of singular quadrics in the jacobian ideal of $f$. This kind of topic, i.e. the study of system of singular quadrics (not necessarily in the Jacobian ideal of a cubic form) has been itself object of interest and study (see for example \cite{Dim} or \cite{BM}).\\
Another fundamental tool in this setting is a second identification of the projective space $\bP^n$, namely the one with the first graded part of the apolar ring associated with a hypersurface $V(f)\subset\bP^n$, which is not a cone. 
Such ring is defined as the graded quotient $\cA_f=\cL/\Ann_{\cL}(f)$, where $\cL$ is the ring of linear differential operators with constant coefficients and $\Ann_{\cL}(f)$ is the annihilator ideal of $f$ (see Section \ref{SEC:preliminaries} for more details).

The apolar ring, as well as the Jacobian ring defined as the quotient of $S$ by the Jacobian ideal of $f$, is one of the main examples of standard Artinian Gorenstein algebras (one can see \cite{bookLef} for a deep treatment of the topic, or also \cite{BFP2},\cite{BFP3},\cite{BF}, for the use of these algebras in this specific setting).\\
Finally we prove the following:
\begin{THD}
    Let $V(f)\subset\bP^n$, for $n\geq 1$ be a smooth hypersurface defined by a polynomial $f$ of degree $d\geq3$. Then 
    $$\cH_f \mbox{ is never a cone}.$$
\end{THD}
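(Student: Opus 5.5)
The plan is to translate the cone condition into a differential identity for $h_f$ and then get a contradiction from smoothness, using Euler's formula and a local expansion of $\det H_f$ near the putative vertex.

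\emph{Setup.} Suppose $\cH_f$ is a cone with vertex $P$, and set $N:=(n+1)(d-2)=\deg h_f$. Then $h_f(x+sP)=h_f(x)$ for all $s$, so $\partial_P h_f\equiv 0$. Equivalently, every line through $P$ meets $\cH_f$ only at $P$ or lies in it, so $P$ is a point of multiplicity $N$ of $h_f$. This gives the identity
$$\det H_f(P+\varepsilon x)=\varepsilon^{N}h_f(x)\qquad\text{for all } x.$$
By Jacobi's formula, $\partial_P h_f\equiv 0$ reads
$$\operatorname{tr}\bigl(\operatorname{adj}(H_f)\,H_{\partial_P f}\bigr)\equiv 0,$$
since $H_{\partial_P f}=\partial_P H_f$.

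\emph{First consequence.} Evaluate this identity at $x=P$. By Euler's formula applied to the entries of $H_f$, which are homogeneous of degree $d-2$, we have $\partial_P H_f(P)=(d-2)H_f(P)$. The trace therefore equals $(d-2)(n+1)h_f(P)$, so $h_f(P)=0$. Let $c\geq 1$ be the corank of $H_f(P)$. Smoothness enters through $H_f(P)P=(d-1)\nabla f(P)$. Moreover, for $v\in\ker H_f(P)$ we get $v^{t}\nabla f(P)=\tfrac{1}{d-1}v^{t}H_f(P)P=0$. Hence $\ker H_f(P)$ sits inside the tangent hyperplane to the level set of $f$ through $P$.

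\emph{Expansion near $P$.} Expand
$$H_f(P+\varepsilon x)=H_f(P)+\varepsilon\,\partial_xH_f(P)+O(\varepsilon^2),\qquad \partial_xH_f(P)=H_{\partial_x f}(P).$$
In a basis adapted to $\ker H_f(P)$, the lowest-order term of $\det H_f(P+\varepsilon x)$ in $\varepsilon$ is
$$\varepsilon^{c}\cdot(\text{nonzero minor of } H_f(P))\cdot\det\bigl(H_{\partial_xf}(P)|_{\ker H_f(P)}\bigr).$$
Since $c\leq n+1<N$ when $d\geq 4$, and $c<N$ also holds when $d=3$ because $c\leq n<n+1=N$ by the Euler relation (which gives $P\notin\ker H_f(P)$), the cone identity forces
$$\det\bigl(H_{\partial_xf}(P)|_{K}\bigr)=0\quad\text{for every } x,\qquad K:=\ker H_f(P).$$
I would then push this to higher order: the vanishing of all coefficients of $\varepsilon^{j}$ for $j<N$ gives a chain of conditions. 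The goal is to show that they force a vector $v\in K$ with $H_{\partial_x f}(P)v\in \operatorname{Im}$-compatible constraints for all $x$. Unwinding via $H_{\partial_xf}(P)v=\partial_v\nabla(\partial_x f)(P)=H_f$-type expressions, this should yield $\partial_v\partial_x\nabla f(P)=0$ for all $x$, i.e. $\partial_v\nabla f$ vanishing to high order at $P$. Combined with $H_f(P)v=0$ and homogeneity, this should produce a point where all $\partial_i f$ vanish, contradicting smoothness.

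\emph{Alternative route.} In case the above gets stuck, I would use the covariance of the hessian, $h_{f\circ g}=\det(g)^2\,h_f\circ g$. Differentiating along $g_t=\mathrm{id}+t\,P\otimes e_j^{*}$ shows that the differential of the hessian map at $f$ satisfies
$$Dh_f\bigl(x_j\,\partial_Pf\bigr)=2P_j\,h_f+x_j\,\partial_Ph_f=2P_j\,h_f.$$
This exhibits an $(n+1)$-dimensional space $\langle x_j\partial_Pf\rangle$ on which $Dh_f$ collapses onto $\langle h_f\rangle$. For smooth $f$, the forms $x_j\partial_Pf$ are independent modulo $f$, which contradicts the injectivity of $Dh_f$ at smooth $f$ if that injectivity can be proved via the apolar/Jacobian ring description of $\bP^n$.

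\emph{Main obstacle.} The hard step is controlling the full chain of vanishing coefficients of $\det H_f(P+\varepsilon x)$, not just the first nontrivial one, so as to reach a genuine contradiction with smoothness for all $d\geq 3$. I would try first to handle the case $c=1$, where the restriction to $K$ is a scalar and the first condition already reads $v^{t}H_{\partial_xf}(P)v=\partial_x\partial_v^2 f(P)=0$ for all $x$, i.e. $\nabla(\partial_v^2f)(P)=0$. I would then try to propagate this information to general corank.
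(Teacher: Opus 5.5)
Your proposal has a genuine gap: neither route reaches a contradiction.

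\textbf{Main route.} The expansion of $\det H_f(P+\varepsilon x)$ gives only the first-order necessary condition $\det\bigl(H_{\partial_xf}(P)|_K\bigr)=0$. Everything after that (``this should yield'', ``should produce a point where all $\partial_if$ vanish'') is not an argument, as you acknowledge yourself. The condition does close in the simplest case $d=3$, $c=1$: there $v^tH_f(x)v=x^tH_f(v)v=0$ for all $x$, so $H_f(v)v=2\nabla f(v)=0$, which contradicts smoothness. But for $d\geq 4$ it is only a pointwise condition at $P$, and you give no mechanism for exploiting the higher-order coefficients.

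\textbf{Alternative route.} It rests on a false statement. The differential of the hessian map is \emph{not} injective at every smooth $f$, not even modulo $\langle f\rangle$. Take the Fermat form $f=\sum_ix_i^d$, so $h_f=\kappa\prod_ix_i^{d-2}$ with $\kappa\neq0$. For any non-scalar $C=\diag(c_0,\dots,c_n)$, your own covariance formula gives $Dh_f\bigl(\sum_ic_ix_i\partial_{x_i}f\bigr)=2\,\mathrm{tr}(C)\,h_f+\sum_ic_ix_i\partial_{x_i}h_f=d\,\mathrm{tr}(C)\,h_f$. This vanishes outright when $\mathrm{tr}(C)=0$, although $\sum_ic_ix_i\partial_{x_i}f=d\sum_ic_ix_i^d\notin\langle f\rangle$. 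So no general immersivity of the hessian map at smooth points can be invoked.

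\textbf{What is missing.} In the Fermat case and in the hypothetical cone case alike, $Dh_f$ sends the relevant elements into $\langle h_f\rangle$. What distinguishes them is the coefficient: $d\,\mathrm{tr}(C)$ versus $2P_j$. Detecting this requires working modulo $J_f$, via the paper's congruence
$x_i\partial_{x_j}h_f\equiv(d-2)\delta_{ij}h_f \pmod{J_f}$.
The paper proves it as follows (say for $i=0$; other $i$ are symmetric).
\begin{enumerate}
\item Multiply the $0$-th row of $H_f$ by $x_0$ and add $\sum_{i\geq1}x_i$ times the $i$-th row. By Euler's identity, $x_0h_f=(d-1)\det B$, where $B$ is $H_f$ with its $0$-th row replaced by $\nabla f$.
\item Differentiate $\det B$ row by row with respect to $x_j$. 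Every term in which the first row is untouched lies in $J_f$, by Laplace expansion along that row.
\item The term where the first row is differentiated is $0$ for $j\neq0$ (two equal rows) and equals $h_f$ for $j=0$.
\item For $j=0$, combine this with $\partial_{x_0}(x_0h_f)=h_f+x_0\partial_{x_0}h_f$ to get $x_0\partial_{x_0}h_f\equiv(d-2)h_f$.
\end{enumerate}

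With this congruence, your covariance computation finishes at once. If $\partial_Ph_f\equiv0$ and $P_j\neq0$, then $0=x_j\partial_Ph_f=\sum_kP_k\,x_j\partial_{x_k}h_f\equiv(d-2)P_jh_f \pmod{J_f}$. Hence $h_f\in J_f$, contradicting the fact that $h_f$ spans the socle $R_f^{(n+1)(d-2)}$ of the Jacobian ring of a smooth hypersurface. It is this socle property, not any injectivity of $Dh_f$, that carries the smoothness hypothesis.
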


After a direct computation, the proof of Theorem D is essentially based on a well-known result dealing with generators of the socle of Jacobian rings of smooth hypersurfaces.\\
\ \\
The structure of the article is the following. In Section \ref{SEC:preliminaries}, we will describe the setting we will work with, also giving the basic definitions and results we will start from. In Section \ref{SEC:TS}, we will prove Theorem A (see Theorem \ref{THM:Sigmaplus}), while in Section \ref{SEC:linearspaces} we will deal with Theorem B and the description of particular linear subspaces in the Hessian variety: it will be divided into two subsections dealing respectively with case of surfaces and that of higher dimension. In Section \ref{SEC:reconstruction}, we will deal with Theorem C (see Theorem \ref{THM:CO}), by splitting the steps of the proof into three subsections. Finally, in Section \ref{SEC:hessiancones}, we prove Theorem D (see Theorem \ref{THM:hessianinjacobians} and Corollary \ref{COR:conclusioncones}).

\section{Preliminaries and notations}
\label{SEC:preliminaries}
In this first section, we present the main objects we will deal with, together with some results which will be crucial in what follows. \\
Let $\bK$ be an algebraically closed field of characteristic zero, and let us denote by 
$$S_n=\bK[x_0,\dots,x_n]=\bigoplus_{d\geq0}S_n^d$$
the polynomial ring with coefficients in $\bK$ in $n+1$ variables, equipped with the usual grading.\\
With a homogeneous polynomial $f$ of degree $d$ (i.e. $f\in S_n^d$), we can naturally associate the following objects:
\begin{itemize}
    \item the {\bf{Hessian matrix}} $H_f$, which is a square symmetric matrix of order $(n+1)$ whose entries are the second partial derivatives of $f$, namely
    $$H_f=[\partial^2f/\partial x_i\partial x_j]_{i,j=0,\dots,n};$$
    \item the {\bf{Hessian polynomial}} $h_f$, defined as the determinant of $H_f$.
\end{itemize}
We say that $f$ is in the Gordan-Noether locus $\cC_{GN}$ if its associated hessian polynomial is the zero polynomial, i.e. $h_f\equiv0$ (see for example \cite{GN} or the more modern \cite{Rus}, \cite{GR},\cite{GR09}, \cite{BFP}, for a study of this topic). If, for example, $f$ defines a smooth hypersurfaces of degree $d$ in $\bP^n$, then it is not an element of $\cC_{GN}$ and, in particular, $h_f$ is a homogeneous polynomial of degree $(d-2)(n+1)$. In this case, we can associate with $f$ also
\begin{itemize}
    \item the {\bf{Hessian variety}} $\cH_f$, the hypersurface defined as the zero locus $V(h_f)\subset\bP^n$ of the hessian polynomial. 
\end{itemize}

More generally, let us also define the so called {\bf{Hessian loci}}
$$\cD_k(f)=\{[x]\in\bP^n \ | \ \rk(H_f(x))\leq k\}\subseteq\bP^n,$$
which give a natural stratification of the whole projective space $\bP^n$ (for example, we clearly have that $\cD_{n+1}(f)=\bP^n$ and $\cD_n(f)=\cH_f$). Given $[x]\in\cD_k(f)$, we can define
\begin{equation}
\label{EQ:iota}
\iota_f([x]):=\bP(\ker(H_f(x)));
\end{equation}
in particular, we have that
$$[x]\in\cD_k(f)\setminus\cD_{k-1}(f) \quad \Longrightarrow \quad \iota_f([x])\simeq\bP^{n-k}.$$
Let us now introduce also an incidence correspondence
\begin{equation}
\label{EQ:Gamma}
    \Gamma_f=\{([x],[y])\in\bP^n\times\bP^n \ | \ H_f(x)\cdot y=0, \ \mbox{i.e. }[y]\in\iota_f([x])\}\subset\bP^n\times\bP^n
\end{equation}
and $\pi_1$ and $\pi_2$ will denote the two projections from $\Gamma_f$.
\bigskip
\ \\
From now on, we set $d=3$. In this case, one can observe that the incidence correspondence just defined allows us to identify the Hessian hypersurface $\cH_f$ with the Steinerian hypersurface described by Dolgachev in \cite{Dol}.\\
Let us also denote
$$\cU_n:=\{[f]\in\bP(S^3_n) \ | \ V(f)\subset\bP^n \ \mbox{is a smooth cubic hypersurface}\}.$$
For cubic polynomials, we proved the following facts (see \cite{BFP2}):
\begin{theorem}
    \label{THM:preliminaries}
    Let $[f]\in\cU_n$, then:
    \begin{enumerate}
        \item $\iota_f$ defines a birational involution on $\cH_f$;
        \item $\cH_f$ is reduced, and its singular locus $\Sing(\cH_f)$ coincide with $\cD_{n-1}(f)$;
        \item For any $k$, we have that $\dim(\cD_k(f))\leq k-1$.
    \end{enumerate}
\end{theorem}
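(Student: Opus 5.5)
The key fact for cubics is that $H_f(x)$ is linear in $x$. Writing $f$ via its symmetric trilinear form $T$ (up to the constant factor $6$), we have $H_f(x)\cdot y = \nabla_y(\nabla f)(x)$, and this is symmetric in $x$ and $y$:
$$H_f(x)\cdot y = H_f(y)\cdot x.$$
I would first establish this symmetry and use it throughout. Together with smoothness, it gives a useful reformulation. By Euler's formula, $H_f(x)\cdot x = 2\nabla f(x)$. Since $V(f)$ is smooth, the partial derivatives of $f$ have no common zero, so $H_f(x)\cdot x\neq 0$ for every $x\neq 0$. Equivalently, the quadrics in the net $\{\sum_i y_i\,\partial_i f\}$ have no common zero. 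I will prove the three items in the order (3), (1), (2).

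For (3), consider the incidence $\Gamma_f$. By the symmetry, $([x],[y])\in\Gamma_f$ if and only if $([y],[x])\in\Gamma_f$, and $\Gamma_f$ avoids the diagonal by the Euler remark. If $\dim\cD_k(f)\geq k$, then $\pi_1^{-1}(\cD_k(f))$ has dimension at least $k+(n-k)=n$, since its fibres are $\bP^{\geq n-k}$. Its image under $\pi_2$ is then a subvariety of dimension at least $n-(n-k)$ in the appropriate sense. The cleaner route is the following: take $Z=\cD_k(f)$ with $\dim Z\geq k$, and consider the linear map $x\mapsto H_f(x)$ restricted to the cone $\hat Z$. For $z\in \hat Z$, $\ker H_f(z)$ is a subspace $K_z$ of dimension at least $n+1-k$. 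Since $\dim\hat Z\geq k+1$, a dimension count shows that $\hat Z\cap K_z\neq\{0\}$ (both are cones in $\bK^{n+1}$, and their dimensions sum to at least $n+2$). This gives $w\neq 0$ with $w\in K_z\cap \hat Z$, that is $H_f(z)w=0$. The symmetry trick converts this into a common zero of the partials. I expect this step to be the main obstacle, because $\hat Z$ is not linear and the intersection alone gives only $H_f(z)w=0$, not $H_f(w)w=0$. To reach $H_f(w)w=0$, I would first try varying $z$ along a curve in $Z$ through $w$ and differentiating the relation $H_f(z(t))w(t)=0$. Alternatively, I would use the incidence $\{(z,w)\}$ over $Z$, of dimension at least $n$ in $\bP^n\times\bP^n$. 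Its intersection with the diagonal is then nonempty by Bezout in $\bP^n\times\bP^n$, because the class of the diagonal is ample enough: a subvariety of dimension at least $n$ meets the diagonal. A point on the diagonal gives $H_f(w)w=0$, contradicting smoothness.

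For (1), the same argument applied to $\Gamma_f$ itself works as follows. $\Gamma_f$ is cut out by $n+1$ bilinear equations. Over $\cH_f\setminus\cD_{n-1}(f)$ the fibre is a single point, so the component $\Gamma^0$ dominating $\cH_f$ is birational to $\cH_f$ via $\pi_1$. The symmetry shows that the swap maps $\Gamma_f$ to itself, so $\pi_2(\Gamma^0)\subseteq\cH_f$. By (3), the locus $\cD_{n-1}(f)$ has dimension at most $n-2$, so the preimage of $\cD_{n-1}(f)$ in $\Gamma_f$ has dimension at most $n-1$, strictly less than that of $\Gamma^0$. Hence the swap preserves $\Gamma^0$, provided $\Gamma^0$ is the unique component of dimension $n-1$. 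This uniqueness needs irreducibility of $\cH_f$ or an argument component by component; I would argue per component of $\cH_f$. Thus $\iota_f=\pi_2\circ\pi_1^{-1}$ is a birational map with $\iota_f^2=\mathrm{id}$.

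For (2), differentiate $h_f=\det H_f$. Jacobi's formula gives $\partial_i h_f(x)=\mathrm{tr}(\mathrm{adj}(H_f(x))\,\partial_iH_f)$. At a point of corank exactly one, $\mathrm{adj}(H_f(x))=y\,y^{t}$ with $[y]=\iota_f([x])$. Since $\partial_i H_f$ is the matrix of the third derivatives, we get
$$\partial_i h_f(x)=y^{t}(\partial_iH_f)y\propto (H_f(y)y)_i=2\,\partial_if(y).$$
This is nonzero for some $i$, by smoothness. So $\cH_f$ is smooth exactly off $\cD_{n-1}(f)$. It is singular on $\cD_{n-1}(f)$, because there the adjugate vanishes. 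Reducedness follows because $h_f$ has nonvanishing gradient on a dense open subset of each component: indeed $\cD_{n-1}(f)$ has dimension at most $n-2$ by (3).
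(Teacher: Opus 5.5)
The paper does not prove this theorem; it quotes it from \cite{BFP2}. I therefore judge your proposal on its own merits. Parts (3) and (2) are fine.

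For (3), discard the first attempt with $\hat Z\cap K_z$ and keep the incidence argument. Over a component $Z\subseteq\cD_k(f)$ with $\dim Z\ge k$, the incidence has dimension $\ge n$, so it meets the diagonal, contradicting \eqref{EQ:notdiagonal}. Replace ``the diagonal is ample enough'' by the one-line proof. The affine bi-cone over an $m$-dimensional $X\subseteq\bP^n\times\bP^n$ has dimension $m+2\ge n+2$ in $\bK^{2n+2}$ and contains the origin. Hence it meets the $(n+1)$-dimensional subspace $\{u=v\}$ in a positive-dimensional set, which yields a point $([w],[w])\in X$.

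For (2), Jacobi's formula together with $H_f(e_i)y=H_f(y)e_i$ gives $\nabla h_f(x)\propto\nabla f(y)$, with $[y]=\iota_f([x])$, at corank-one points. This yields reducedness and $\Sing(\cH_f)=\cD_{n-1}(f)$, exactly as you say.

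The gap is in (1). Your ``strictly less'' is false. By (3), $\dim\pi_1^{-1}(\cD_k(f)\setminus\cD_{k-1}(f))\le(k-1)+(n-k)=n-1=\dim\Gamma^0$, with equality as soon as some $\cD_k(f)$, $k\le n-1$, attains the maximal dimension $k-1$. Since $\Gamma_f$ is cut out by $n+1$ forms of bidegree $(1,1)$ in $\bP^n\times\bP^n$, all its components have dimension exactly $n-1$. So such pieces are genuine extra components of $\Gamma_f$ lying over $\cD_{n-1}(f)$, and the swap can send a component of $\Gamma^0$ onto one of them.

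This really happens. Take a cyclic $f=x_0^3+f_1(x_1,\dots,x_n)$. Then $V(x_0)$ is a component of $\cH_f$, and a general $Q\in V(x_0)$ has $\iota_f(Q)=P_0$; the paper itself notes $V(x_0)=\iota_f(P_0)$ (cf.\ Remark \ref{REM:Fermat}). The swap sends $V(x_0)\times\{P_0\}\subseteq\Gamma^0$ to $\{P_0\}\times V(x_0)$, which lies over $P_0\in\cD_1(f)$. Thus $\iota_f$ contracts $V(x_0)$ to a point. So (1), as literally stated, cannot be proved for every $[f]\in\cU_n$; it needs an extra hypothesis.

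What your argument does establish is the following:
\begin{enumerate}
\item the swap permutes the components of $\Gamma_f$;
\item $\iota_f$ restricts to a birational map $H\dashrightarrow H'$ between components of $\cH_f$ exactly when $\iota_f(H)\not\subseteq\cD_{n-1}(f)$;
\item if $\dim\cD_k(f)<k-1$ for all $k\le n-1$, then $\Gamma_f=\Gamma^0$ and $\iota_f$ is a birational involution of $\cH_f$.
\end{enumerate}
The condition in the last item holds for general $f$. At a fixed point, $\rk H_f(x)\le k$ imposes $\binom{n+2-k}{2}$ conditions on $f$, and $n-\binom{n+2-k}{2}<k-1$ for $k\le n-1$.
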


In particular, for any smooth cubic hypersurface $V(f)\subset\bP^n$, we proved that 
$$\dim(\Sing(\cH_f))\in\{n-3,n-2\},$$
where the first case is the general one. \\

In \cite{BFP3}, the authors analyzed the case where the dimension of the singular locus of $\cH_f$ is not the expected one and we proved the following:

\begin{theorem}[Theorem 4.1, \cite{BFP3}]
\label{THM:TS}
Let $[f]\in\cU_n$. If $n\leq5$, then the following holds:
$$\dim(\Sing(\cH_f))=n-2 \quad \iff \quad f \ \mbox{ is of Thom-Sebastiani type}.$$
\end{theorem}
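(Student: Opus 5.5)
The easy implication comes first. If, after a linear change of coordinates, $f=g(x_0,\dots,x_k)+h(x_{k+1},\dots,x_n)$ with $0\le k<n$, then $H_f$ is block diagonal with blocks $H_g$ and $H_h$. Hence $h_f=h_g\cdot h_h$. Smoothness of $V(f)$ forces $V(g)$ and $V(h)$ to be smooth in their own spaces, or to be a pure cube $x_0^3$ when the block has size one. So neither $h_g$ nor $h_h$ is identically zero, and both have positive degree $k+1$, resp. $n-k$. At any point of $V(h_g)\cap V(h_h)\subset\bP^n$ both blocks drop rank, so $\rk H_f\le n-1$. This intersection is non-empty and every component has dimension $\ge n-2$. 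Theorem \ref{THM:preliminaries}(2) identifies it as a subset of $\Sing(\cH_f)$, and Theorem \ref{THM:preliminaries}(3) bounds $\dim\cD_{n-1}(f)\le n-2$. Therefore $\dim\Sing(\cH_f)=n-2$.

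For the converse I would take an irreducible component $Z\subseteq\cD_{n-1}(f)$ of dimension $n-2$. By Theorem \ref{THM:preliminaries}(3), $\cD_{n-2}(f)$ has dimension at most $n-3$, so a general $P\in Z$ has corank exactly two and $\iota_f(P)$ is a line. The key tool is the symmetry peculiar to cubics, $H_f(x)\cdot y=H_f(y)\cdot x$, which makes $\Gamma_f$ symmetric under swapping the factors.

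Consider $Y:=\overline{\pi_2(\pi_1^{-1}(Z))}$, the variety swept out by the kernel lines over $Z$. It lies in $\cH_f$. By symmetry, for $y\in Y$ the kernel $\iota_f(y)$ contains all points $x\in Z$ with $y\in\iota_f(x)$. I would then study the linear spans $\langle Z\rangle$ and $\langle Y\rangle$, with the aim of showing they are complementary subspaces $\bP(V)$ and $\bP(W)$ with $\bK^{n+1}=V\oplus W$. The relation $H_f(x)y=0$ reads $\partial_x\partial_y f=0$, and it is bilinear in $(x,y)$ for cubics. So once it holds on all of $Z\times Y$, it holds on $\langle Z\rangle\times\langle Y\rangle$, i.e.\ $\partial_v\partial_w f=0$ for all $v\in V$, $w\in W$. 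Integrating this, in coordinates adapted to $V\oplus W$ the polynomial $f$ has no mixed monomials, hence $f$ is of Thom-Sebastiani type.

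The main obstacle is proving that the spans are complementary: that $\langle Z\rangle\cap\langle Y\rangle=\emptyset$, that they fill $\bP^n$, and that the incidence $H_f(x)y=0$ extends from the pairs on kernel lines to all of $Z\times Y$. This is exactly where the hypothesis $n\le5$ enters. My first attempt would be a case analysis on $n$ and on $(\dim\langle Z\rangle,\dim Y)$, using:
\begin{enumerate}
\item the fibre-dimension count $\dim Y\le n-1$, which says the kernel lines cannot sweep all of $\cH_f$ generically, since otherwise $\iota_f$ would contradict Theorem \ref{THM:preliminaries}(1);
\item the identification $\bP^n\cong\bP(J_f^2)$, which turns $Z$ into an $(n-2)$-dimensional family of quadrics of rank $\le n-1$ in the Jacobian ideal; the classification of such families in at most $6$ variables should force them to share a common vertex of the required form.
\end{enumerate}
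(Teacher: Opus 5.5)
Your forward implication is correct, and it works for every $n$. The converse is the whole content of the statement, and the proposal does not prove it: it names a target and defers it to an unspecified case analysis. Worse, the target is false, so no case analysis can reach it.

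Take $f=x_0^3+f_1(x_1,\dots,x_n)$ with $f_1$ general. By the description in the proof of Lemma \ref{LEM:TSSing}, the $(n-2)$-dimensional part of $\cD_{n-1}(f)$ is $Z=\cH_{f_1}\subset\{x_0=0\}$. For $x=[0:w]\in Z$ one has $\ker H_f(x)=\langle e_0\rangle\oplus\ker H_{f_1}(w)$. So every kernel line passes through $P_0=[1:0:\dots:0]$, and $Y=\tilde{\cH}_{f_1}$ is the cone over $\cH_{f_1}$ with vertex $P_0$. Consequences:
\begin{itemize}
\item $\langle Y\rangle=\bP^n$, so $\langle Z\rangle$ and $\langle Y\rangle$ are not complementary.
\item For $y=[t:w']\in Y$ one gets $H_f(x)y=(0,H_{f_1}(w)w')$, which vanishes only when $y$ lies on the kernel line of $x$. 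The incidence does not extend to $Z\times Y$.
\item $Y$ is a whole component of $\cH_f$ that $\iota_f$ contracts onto $Z$. So the birationality you invoke in point (1) fails exactly for the forms you want to detect.
\end{itemize}

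Now take $n=5$ and $f=f_1(x_0,x_1,x_2)+f_2(x_3,x_4,x_5)$ with $f_1,f_2$ general. The $3$-dimensional component of $\cD_4(f)$ is the join $Z=J(\cH_{f_1},\cH_{f_2})$ of the two Hessian curves. The kernel line at $[v:w]$ joins $[\iota_{f_1}(v):0]$ and $[0:\iota_{f_2}(w)]$. Thus $Y=Z$ and $\langle Z\rangle=\langle Y\rangle=\bP^5$. Moreover these kernel lines have no common point, so the ``common vertex'' of your point (2) does not exist either.

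The summands of a splitting $f=f_1+f_2$ along $\bK^{n+1}=V\oplus W$ are not spans of $Z$ and $Y$. They are linear spaces in lower Hessian loci: $\bP(V)\subseteq\cD_{k+1}(f)$ with $\bP(W)\subseteq\iota_f([v])$ for all $v\in V$. In the first example these are the point $P_0\in\cD_1(f)$ and the hyperplane $\{x_0=0\}$. In the second they are the planes $\{x_3=x_4=x_5=0\}\subset\cD_3(f)$ and $\{x_0=x_1=x_2=0\}$. This is exactly Theorem \ref{THM:charactTS}, which the paper names as one of the main tools of the proof in \cite{BFP3}; the result is only quoted here.

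So the missing idea is this: from an $(n-2)$-dimensional component of $\cD_{n-1}(f)$, produce a linear $\bP^k\subseteq\cD_{k+1}(f)$, such as a point of $\cD_1(f)$ or, for $n=5$, a plane in $\cD_3(f)$. Then apply Theorem \ref{THM:charactTS}. This is where $n\le5$ has to be used, via the short list of possible splittings in Remark \ref{REM:cubicsTS}. For $n\ge6$ the implication is still conjectural, as the paper remarks after Theorem \ref{THM:Sigmaplus}. An argument in which $n\le5$ plays no identifiable role should therefore be regarded with suspicion.

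Your closing step is correct: $\partial_v\partial_w f=0$ on $V\times W$ does force the absence of mixed monomials. But it needs that pair $(V,W)$ as input, and your construction does not supply it.
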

By saying that a polynomial is of Thom-Sebastiani type we mean the following:

\begin{definition}
\label{DEF:TS}
    A polynomial $f\in S^d_n\setminus\{0\}$ is said to be of {\it Thom-Sebastiani type} (we will often use the abbreviation TS) if up to a change of coordinates it can be written as
    $$f=f_1(x_0,\dots,x_k)+f_2(x_{k+1},\dots,x_n)$$
for suitable $0\leq k\leq n-1$, and $f_1,f_2$ polynomials of degree $d$ in $k+1$ and $n-k$ variables respectively.\\
A polynomial $f$ of TS type is said to be {\it{cyclic}} if $k=0$, i.e. it can be written as $f=x_0^3+f_2(x_1,\dots,x_n)$.
\end{definition}

In literature, these polynomials are also called {\it{direct sums}}, as for example in \cite{BBKT} or \cite{Fed}.

\begin{remark}
    \label{REM:cubicsTS}
    Let us stress that, for $n\leq4$, saying that a polynomial is of TS type is equivalent for it to be cyclic. For cubic fourfolds, on the other hand, also the case where $f$ can be written as sum of two cubic forms each depending on three variables arises.
\end{remark}

The following characterization of Thom-Sebastiani polynomials has been one of the main tools in the proof of the above Theorem. We recall it here, since it will be useful in the following.

\begin{theorem}[Theorem 2.3, \cite{BFP3}]
\label{THM:charactTS}
 A polynomial $f$ defining a smooth cubic hypersurface is of Thom-Sebastiani type of the form $f=f_1(x_0,\dots,x_k)+f_2(x_{k+1},\dots,x_n)$ if and only if $\cD_{k+1}(f)$ contains a projective space $\bP^{k}$.
\end{theorem}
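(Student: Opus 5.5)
We prove both implications by working with the Hessian matrix of a cubic, which is linear in the point: $H_f(x)=\sum_i x_i H_{f_{x_i}}$. The symmetric form $H_f(x)(y,z)$ is the full polarization of $f$, so it is symmetric in $x,y,z$. In particular $H_f(x)\cdot y=H_f(y)\cdot x$.

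\textbf{The easy direction.} Suppose $f=f_1(x_0,\dots,x_k)+f_2(x_{k+1},\dots,x_n)$. Then $H_f$ is block diagonal, $H_f(x)=\diag(H_{f_1}(x'),H_{f_2}(x''))$ with blocks of sizes $k+1$ and $n-k$. On the linear space $\Lambda=\{x_0=\dots=x_k=0\}\simeq\bP^{n-k-1}$ the first block vanishes, so $\rk H_f\leq n-k$ and $\Lambda\subset\cD_{n-k}(f)$. Symmetrically, $\{x_{k+1}=\dots=x_n=0\}\simeq\bP^k$ lies in $\cD_{k+1}(f)$, which is the required statement.

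\textbf{The converse.} Let $\Pi=\bP(V)\simeq\bP^k$ be contained in $\cD_{k+1}(f)$, with $\dim V=k+1$. The plan has three steps.

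First, we show that $H_f(x)$ has image exactly $V^\perp$ (the annihilator of $V$ under the dual pairing) and kernel containing a fixed complement. By Theorem~\ref{THM:preliminaries}(3), $\dim\cD_k(f)\leq k-1<k$, so a general $x\in\Pi$ has rank exactly $k+1$. Smoothness of $V(f)$ should give the following: for $x,y\in V$, the symmetry $H_f(x)y=H_f(y)x$ together with the rank bound on $\Pi$ forces the bilinear map $V\times V\to \bK^{n+1}$, $(x,y)\mapsto H_f(x)y$, to have image of dimension at most $k+1$. One way to see this is to restrict the polarization to $V$: the kernels $K_x=\ker H_f(x)$ have dimension $n-k$ for general $x\in\Pi$. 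I expect these kernels to be constant, equal to a fixed subspace $W$ of dimension $n-k$ with $V\cap W=0$. To show constancy, differentiate the rank condition. For $x\in\Pi$ general, $v\in V$ and $w,w'\in K_x$, the derivative of the determinant minors gives $H_f(v)(w,w')=0$. By symmetry of the polarization this reads $H_f(w)(v,w')=0$ for all $v\in V$ and $w'\in K_x$.

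Second, we identify the decomposition. Set $W=K_x$ for general $x$ and use coordinates adapted to $V\oplus W$. The identity above, together with $H_f(x)w=0$, shows that the mixed terms of $f$ of type $V\cdot V\cdot W$ and $V\cdot W\cdot W$ vanish, at least along $x$. Polarizing in $x\in V$ should give the vanishing for all of $V$ and constancy of $K_x$. Then $f=f_1(V^*)+f_2(W^*)$, which is the Thom-Sebastiani form.

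The main obstacle is exactly this constancy of $K_x$ and the case $V\cap K_x\neq 0$. The latter must be excluded using smoothness. If $v\in V\cap K_x$, then $H_f(x)v=0$ gives $H_f(v)x=0$, and iterating on the open set of $\Pi$ should yield a point $v$ with $H_f(v)v=0$, i.e.\ $\nabla f(v)=0$ by Euler's formula. That makes $v$ a singular point of $V(f)$, a contradiction. For the constancy itself I would first try a dimension count. The map $x\mapsto K_x$ from $\Pi$ to $G(n-k,n+1)$, combined with $H_f(K_x)\cdot V\subset$ a space of dimension $\leq k+1$, should force the union of the kernels to stay inside an $(n-k)$-dimensional space. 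Otherwise that union would produce points of $\cD_j(f)$ of too large dimension, contradicting Theorem~\ref{THM:preliminaries}(3).
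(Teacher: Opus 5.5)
Your easy direction is correct, and so is your first-order identity. At a point $x\in\Pi$ with $\rk H_f(x)=k+1$ (a general point, by Theorem~\ref{THM:preliminaries}(3)), the Schur complement of $H_f(x+tv)$ forces $H_f(v)(w,w')=0$ for all $v\in V$ and $w,w'\in K_x$. The converse, however, has a genuine gap exactly where you place the main obstacle: nothing in the proposal shows that $K_x$ is independent of $x$. Without that you get neither a fixed complement $W$ nor the vanishing of the $V\cdot V\cdot W$ terms.

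``Polarizing in $x$'' is circular. The relation $H_f(x)w=0$ gives $H_f(x)(v,w)=0$ only for the particular $x$ with $w\in K_x$. To let $x$ vary you would need $w\in K_{x'}$ for every $x'\in V$, which is the constancy itself. The dimension count is not carried out, and it is unclear what it could use. For $y\in K_x$, the relations $H_f(y)x=0$ and $H_f(y)(V,K_x)=0$ give no bound better than $\rk H_f(y)\leq n$, so Theorem~\ref{THM:preliminaries}(3) yields no contradiction. Two smaller errors: the image of $H_f(x)$ is $K_x^\perp$, of dimension $k+1$, not $V^\perp$, which has dimension $n-k$; and the ``iteration'' producing $v$ with $H_f(v)v=0$ is not an argument as written.

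The missing idea is that your own identity, taken with $w=w'$, already gives $\bP(K_x)\subseteq\cB:=\bigcap_{v\in V}V(\partial_vf)$, because $\partial_vf(w)=\tfrac12\,w^tH_f(v)w$ by Lemma~\ref{LEM:apolarfacts}(e). This is the same base-locus mechanism the paper uses in Section~\ref{SEC:linearspaces} (cf.\ \eqref{EQ:baselocus}). The argument then runs as follows.
\begin{enumerate}
\item Since $V(f)$ is smooth, $J_f$ is generated by a regular sequence. Hence the $k+1$ independent quadrics $\partial_vf$, $v\in V$, cut out a complete intersection $\cB$ of pure dimension $n-k-1=\dim\bP(K_x)$.
\item Therefore $\bP(K_x)$ is an irreducible component of $\cB$. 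As $\cB$ has finitely many components and $x\mapsto K_x$ is a morphism on the irreducible open set $\Pi\setminus\cD_k(f)$, the kernel $K_x=W$ is constant. By linearity, $H_f(v)W=0$ for all $v\in V$.
\item Now $V\cap W=0$ is immediate: $u\in V\cap W$ gives $H_f(u)u=2\nabla f(u)=0$, contradicting smoothness (property \eqref{EQ:notdiagonal}).
\item Hence $\bK^{n+1}=V\oplus W$ and the polarization vanishes on $V\times W\times\bK^{n+1}$. All mixed monomials drop out, so $f=f_1+f_2$ with $f_1$ depending on the $k+1$ coordinates of $V$.
\end{enumerate}
The paper itself only quotes this result from \cite{BFP3}, so this constancy argument is the step you must supply, not a point of comparison.
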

In \cite{BFGre}, with the same spirit as that of Theorem \ref{THM:TS}, the authors analyzed the case where the Hessian locus $\cD_1(f)$ has the maximal allowed dimension (for $V(f)$ being smooth), i.e. $\dim(\cD_1(f))=0$, and the 
following result has been proved:
\begin{proposition}[Proposition 3.2, \cite{BFGre}]
    \label{PROP:Grenoble}
    Let $[f]\in\cU_n$ be a cubic form. Then it holds:
    $$\dim(\cD_1(f))=0 \ \Longleftrightarrow \ f \ \mbox{ is cyclic}.$$
\end{proposition}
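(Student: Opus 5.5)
The plan is to use the identification $\bP^n\simeq\bP(J^2_f)$ and the general fact that for a cubic $f$ the Hessian at $x$ is the matrix of the quadric $\sum_i x_i\,\partial f/\partial x_i$. More precisely, $H_f(x)$ is the symmetric bilinear form $(y,z)\mapsto D^3f(x,y,z)$, so $\rk H_f(x)\le 1$ means that the quadric $\partial_x f$ (the derivative of $f$ in direction $x$) is the square of a linear form, up to a constant.

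For the direction ($\Leftarrow$), suppose $f$ is cyclic, $f=x_0^3+f_2(x_1,\dots,x_n)$. Take $P=[1:0:\dots:0]$. Then $H_f(P)=\mathrm{diag}(6,0,\dots,0)$, since every second derivative of $f_2$ vanishes at $P$, the $f_2$-terms being homogeneous of degree $3$ in $x_1,\dots,x_n$. So $P\in\cD_1(f)$. On the other hand, $\cD_0(f)=\emptyset$: if $H_f(x)=0$ then by Euler $\nabla f(x)=\tfrac12 H_f(x)x=0$, which contradicts smoothness. Combined with Theorem \ref{THM:preliminaries}(3), which gives $\dim\cD_1(f)\le 0$, we get $\dim\cD_1(f)=0$. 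This is also exactly the case $k=0$ of Theorem \ref{THM:charactTS}: $\cD_1(f)$ contains a $\bP^0$.

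For the direction ($\Rightarrow$), if $\dim\cD_1(f)=0$ then $\cD_1(f)$ contains a point, that is, a $\bP^0$, and Theorem \ref{THM:charactTS} with $k=0$ immediately gives $f=f_1(x_0)+f_2(x_1,\dots,x_n)$ with $f_1=c\,x_0^3$. Since $V(f)$ is smooth, $c\neq0$, so after rescaling $x_0$ the form $f$ is cyclic.

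If one prefers not to rely on Theorem \ref{THM:charactTS}, I would argue directly. Choose coordinates with $P=[1:0:\dots:0]$ and, after a linear change of the remaining variables, $H_f(P)=\mathrm{diag}(\lambda,0,\dots,0)$. Write $f=a x_0^3+x_0^2\ell(x')+x_0q(x')+c(x')$ with $x'=(x_1,\dots,x_n)$. The Hessian condition gives $\ell=0$ and $q=0$, so $f=a x_0^3+c(x')$, and smoothness forces $a\neq0$. The main point to check here is that the coordinate change keeping $H_f(P)$ diagonal does not mix $x_0$ into $x'$. This works because the kernel of $H_f(P)$ is spanned by $e_1,\dots,e_n$, and this kernel is the hyperplane complementary to $P$ used in the splitting. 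That compatibility is the only mildly delicate step.
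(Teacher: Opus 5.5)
Your proof is correct. The paper does not prove this statement itself; it is quoted from \cite{BFGre}. So the relevant comparison is with the tools the paper recalls.

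Your main route reads the proposition as the case $k=0$ of Theorem~\ref{THM:charactTS}, combined with $\dim\cD_1(f)\le 0$ from Theorem~\ref{THM:preliminaries}. This matches how the paper itself uses that theorem: in the proof of Lemma~\ref{LEM:Vline} for $n=4$, a point $\Lambda\simeq\bP^0$ in $\cD_1(f)$ is excluded by Theorem~\ref{THM:charactTS} with $k=0$. However, that case is literally equivalent to the proposition, so citing it is essentially citing the result. It is your direct argument that constitutes a self-contained proof. That argument is the coordinate version of the apolar computations used throughout the paper: if $y_0y_j=0$ in $\cA_f$ for $j=1,\dots,n$, then no monomial divisible by $x_0x_j$ occurs in $f$, so $f=ax_0^3+c(x_1,\dots,x_n)$.

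The one step you should state explicitly is why $\ker H_f(P)$ is complementary to $P$. By Euler, $H_f(P)\cdot P=2\nabla f(P)\neq 0$ because $V(f)$ is smooth; this is property \eqref{EQ:notdiagonal}. Hence $\bK^{n+1}=\langle P\rangle\oplus\ker H_f(P)$, and you may choose $e_1,\dots,e_n$ spanning the kernel while keeping $e_0=P$.

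Note also that this is a change of the basis vectors $e_1,\dots,e_n$, which in general changes the coordinate function $x_0$ as well. Up to scalar, the new $x_0$ is the linear form whose square is the quadric $\partial_P f$ from your first paragraph. A change of $x_1,\dots,x_n$ alone, fixing both $P$ and $x_0$, would only diagonalize $H_f(P)$ if $\ker H_f(P)=\{x_0=0\}$ already. So ``a linear change of the remaining variables'' should be phrased this way. With that made precise, $H_f(P)=\mathrm{diag}(6a,0,\dots,0)$ forces $\ell=q=0$, and smoothness gives $a\neq 0$, as you say.
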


In the following subsection, we will present two natural identifications of the projective space $\bP^n$, which will play a central role in this paper. 

\subsection{Jacobian and apolar rings}
\label{SUBS:jacobianandapolar}
Given a polynomial $f\in S^d_n$, one can naturally define the Jacobian ideal $J_f$ of $f$, which is the ideal in $S_n$ generated by the partial derivatives of $f$, and the {\bf{Jacobian ring}} $R_f$ of $f$: respectively
$$J_f=(\partial f/\partial x_0,\dots,\partial f/\partial x_n) \qquad \mbox{and} \qquad R_f=S_n/J_f.$$
In the case where $V(f)\subset\bP^n$ is a smooth hypersurface of degree $d$, the Jacobian ideal of $f$ is generated (in degree $d-1$) by a regular sequence of degree $d-1$ polynomials, i.e. one has that $\cap_{i=0}^nV(\partial f/\partial x_i)=\emptyset$. In this case, the Jacobian ring $R_f$ is an example of a standard Artinian Gorenstein algebra (see for example \cite{bookLef}, for a deep treatment of this topic) and it can be written as $R_f=R_f^0\oplus\cdots R_f^N$, where $R_f^N$ is called the socle of $R_f$, and $N$ its socle degree. Let us now recall some basic facts, which will be useful later on:
\begin{proposition}
    \label{PROP:soclejacobian}
    Let $f\in S^d_n$ be a homogeneous polynomial of degree $d$ defining a smooth hypersurface $V(f)\subset\bP^n$. Then:
    \begin{itemize}
        \item the socle of the Jacobian ring $R_f$ has degree $N=(n+1)(d-2)$ and $\dim_{\bK}R^N_f=1$;
        \item $h_f\not\in(J_f)$, i.e. the class of $h_f$ is a generator of the socle $R^{(n+1)(d-2)}$.
    \end{itemize}
\end{proposition}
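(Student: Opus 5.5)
The plan is to prove both items via Macaulay duality: pair $R_f$ with itself through the socle, and compute the class of $h_f$ by a local residue (Grothendieck/Euler-type) argument.

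\emph{First item.} Since $V(f)$ is smooth, Euler's formula $d\cdot f=\sum_i x_i\,\partial f/\partial x_i$ together with the hypothesis $\cap_i V(\partial f/\partial x_i)=\emptyset$ shows that $f_i:=\partial f/\partial x_i$, $i=0,\dots,n$, are $n+1$ forms of degree $d-1$ with no common projective zero. Hence they form a regular sequence in $S_n$. The Koszul complex then resolves $R_f$, and the Hilbert series of $R_f$ is
\[
\frac{(1-t^{d-1})^{n+1}}{(1-t)^{n+1}}=(1+t+\cdots+t^{d-2})^{n+1}.
\]
This polynomial has degree $(n+1)(d-2)$ and leading coefficient $1$. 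Therefore $R_f$ is Artinian, its top degree is $N=(n+1)(d-2)$, and $\dim_{\bK}R_f^N=1$. Gorensteinness (a complete intersection) gives that $R_f^N$ is the socle, and that the multiplication pairings $R_f^j\times R_f^{N-j}\to R_f^N$ are perfect.

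\emph{Second item.} I would use the classical description of the socle generator of a complete intersection. Write $f_i=\sum_j a_{ij}x_j$ with $a_{ij}\in S_n^{d-2}$. By Euler applied to each $f_i$, one may take $a_{ij}=\frac{1}{d-1}\,\partial^2 f/\partial x_i\partial x_j$. The standard theorem (Scheja–Storch / Grothendieck local duality) says that for a regular sequence $f_i=\sum_j a_{ij}x_j$ generating an ideal primary to $(x_0,\dots,x_n)$, the class of $\det(a_{ij})$ generates the socle of $S/(f_0,\dots,f_n)$. More precisely, its residue
\[
\operatorname{Res}\left[\frac{\det(a_{ij})\,dx}{f_0\cdots f_n}\right]=\dim_{\bK}R_f\neq 0
\]
in characteristic zero, while the residue vanishes on $J_f$.

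Applying this to our $a_{ij}$ gives $\det(a_{ij})=(d-1)^{-(n+1)}h_f$, so $h_f\notin J_f$. Since $\deg h_f=N$, its class spans $R_f^N$.

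The key step is the residue computation, i.e.\ the transformation law. I would cite it, or argue directly as follows. Choose a linear change taking each $f_i$ to a situation where $x_i^{d-1}\equiv\sum_j b_{ij}x_j$ modulo the ideal. The transformation law then reduces the claim to the monomial case $f_i=x_i^{d-1}$. There the determinant is $\prod_i x_i^{d-2}$ up to a nonzero constant, which is visibly the socle generator.

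The main obstacle is justifying the transformation law cleanly. As an alternative, I would use a degeneration argument: the set of $f$ for which $h_f\in J_f$ is closed in the smooth locus. I would then show it is empty by Hessian-Jacobian duality arguments, for instance via the perfect pairing, checking that $h_f\cdot R_f^0\neq0$ at Fermat and using connectedness together with the constancy of the residue value $\dim R_f$.
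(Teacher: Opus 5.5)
Your argument is correct and follows the standard route. The paper gives no proof and only cites [AGV, \S5.11] and [DGI, Prop.~3.6], which rest on the same local-duality/residue fact: the Koszul resolution of the complete intersection $J_f$ gives the first item, and local duality applied to $h_f=\det(\partial f_i/\partial x_j)=(d-1)^{n+1}\det(a_{ij})$ gives the second. Two corrections: the residue of $\det(a_{ij})\,dx/(f_0\cdots f_n)$ is $1$, and it is $h_f$ whose residue is $\dim_{\bK}R_f=(d-1)^{n+1}$, which does not affect non-vanishing; and your fallback degeneration argument does not stand on its own, because closedness of $\{h_f\in J_f\}$ plus one good point at the Fermat polynomial only makes that locus proper, and what actually makes it empty is the constancy of the residue, which is the theorem itself.
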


For the second statement, one can refer for example to \cite{AGV}[Section 5.11] or also \cite{DGI}[Proposition 3.6]. Let us observe that this last statement does not hold anymore as soon as the hypersurface $V(f)$ has isolated singularities (see again \cite{DGI}).

Let us now define $\cL_n=\bK[y_0,\dots,y_n]=\oplus_{i\geq0}\cL_n^i$, as the ring of linear differential operators with constant coefficients, where 
$$y_i:=\frac{\partial}{\partial x_i} \ \mbox{for all } i=0,\dots,n.$$
Given a homogeneous polynomial $f\in S^d_n$, let us also introduce the annihilator ideal of $f$ and the {\bf{apolar ring}} of $f$ (another example of standard Artinian Gorenstein algebras) as, respectively,
$$\Ann_{\cL_n}(f):=\{\delta\in\cL_n \ | \ \delta(f)=0\} \qquad \mbox{and} \qquad \cA_f=\cL_n/\Ann_{\cL_n}(f)=A_f^0\oplus\dots\oplus A_f^d.$$
Observe now that, in the case where $V(f)\subset\bP^n$ is not a cone, there is no first derivative which vanishes identically, i.e. $(\Ann_{\cL}(f))_1=(0)$ and so $\dim(A^1_f)=n+1$.\\
From the natural pairing $S_n\times\cL_n\rightarrow S_n$, one deduces the natural identification
$$\bP^n\simeq\bP((S_n^1)^*)\simeq\bP(A_n^1):$$
we will identify the points of the projective space $\bP^n$ as linear differential operators of degree $1$.\\
Let us focus on the case $d=3$. In \cite{BFP2} and \cite{BFP3}, we proved the following facts:
\begin{lemma}
    \label{LEM:apolarfacts}
    Given a cubic hypersurface $X=V(f)\subset\bP^n$, which is not a cone, under the identification $\bP^n\simeq\bP(A_f^1)$, we have that:
    \begin{enumerate}[(a)]
        \item computing $H_f(x)\cdot y$ for $[x],[y]\in\bP^n$ is equivalent to compute $x\cdot y\in A_f^2$;
        \item $X=\{[x]\in\bP(A^1_f) \ | \ x^3=0\}$;
        \item $\Sing(X)=\{[x]\in\bP(A_f^1) \ | \ x^2=0\}$;
        \item $\cH_f=\{[x]\in\bP(A^1_f) \ | \ \exists \ [y]\in\bP(A^1_f) \ \mbox{with } \ xy=0\}$;
        \item for all $x,y\in\bK^{n+1}$ we have $y^t\cdot H_f(x)\cdot y =2(x(f))(y)$.
    \end{enumerate}
\end{lemma}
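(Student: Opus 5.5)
The plan is to deduce each item of Lemma \ref{LEM:apolarfacts} from two ingredients. The first is the action of $\cL_n$ on $S_n$ by differentiation. The second is the description of $A_f^k$ as the image of $\cL_n^k$ in the space of forms of degree $3-k$, via $\delta\mapsto\delta(f)$. Because $X$ is not a cone, $(\Ann_{\cL_n}(f))_1=0$, so a vector $x\in\bK^{n+1}$ gives a nonzero class $x=\sum x_iy_i\in A_f^1$. Moreover, for $\delta\in\cL_n^k$, the class of $\delta$ in $A_f^k$ vanishes if and only if $\delta(f)=0$. Every statement then reduces to Euler-type identities for a cubic.

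The first step handles (a) and (e). Write $x=\sum x_iy_i$ and $y=\sum y_j\,\partial_j$ as operators. The product $xy\in A_f^2$ is zero exactly when the linear form $(xy)(f)=\sum_{i,j}x_iy_j\,\partial_i\partial_j f$ vanishes. The coefficients of this form are constants, since $\partial_i\partial_j f$ is linear. Evaluating at a point $z$ gives
\[
(xy)(f)(z)=x^tH_f(z)y .
\]
Because $f$ is cubic, $\partial_i\partial_j\partial_k f$ is constant, and so $H_f(z)_{ij}=\sum_k z_k\,\partial_k\partial_i\partial_j f$. From this we get $x^tH_f(z)y=z^tH_f(x)y$, using the full symmetry of the third derivative tensor. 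Hence $(xy)(f)=0$ if and only if $H_f(x)y=0$, which is (a). For (e), apply Euler's formula to the quadric $x(f)=\sum x_i\partial_i f$: its Hessian is $\sum_i x_i H_{\partial_i f}$, whose $(j,k)$ entry is $\sum_i x_i\,\partial_i\partial_j\partial_k f=H_f(x)_{jk}$. Hence $2\,(x(f))(y)=y^tH_f(x)y$.

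The second step handles (b) and (c). We have $x^3\in A_f^3\cong\bK$, and $x^3(f)$ equals $6f(x)$; this follows from Euler again, or equivalently from (e) with $y=x$ combined with $x^tH_f(x)x=6f(x)$. So $x^3=0$ if and only if $f(x)=0$, giving (b). Next, $x^2=0$ in $A_f^2$ if and only if the linear form $x^2(f)$ vanishes. By the computation in the first step, $x^2(f)(z)=z^tH_f(x)x$, so this holds if and only if $H_f(x)x=0$. Since $H_f(x)x=2\nabla f(x)$ by Euler applied to $\partial_i f$, this is equivalent to $\nabla f(x)=0$, i.e. $[x]\in\Sing(X)$. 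This gives (c).

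The last step is (d): $[x]\in\cH_f$ if and only if $\det H_f(x)=0$, which holds if and only if there is a nonzero $y$ with $H_f(x)y=0$. By (a) this means $xy=0$ in $A_f^2$, and $y\neq0$ defines a point of $\bP(A_f^1)$ by the non-cone hypothesis. The only delicate point in the whole argument is the symmetry used in (a): $H_f(x)$ depends linearly on $x$ through the symmetric cubic tensor of $f$. Keeping the factorial constants straight in (b) and (e) is routine.
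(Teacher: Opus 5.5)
The paper does not prove this lemma; it is recalled from \cite{BFP2} and \cite{BFP3} without argument, so there is no in-text proof to compare against. Your proposal is a correct and complete self-contained verification.

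Every item comes from reading the fully symmetric constant tensor $T_{ijk}=\partial_i\partial_j\partial_k f$ in different ways. This is combined with the injectivity of $A_f^k\to S_n^{3-k}$, $\delta\mapsto\delta(f)$, and with Euler's identity. You use the non-cone hypothesis exactly where it is needed: to guarantee that nonzero vectors give points of $\bP(A^1_f)$, in (d) and in setting up the identification.

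Your identity $(xy)(f)(z)=z^tH_f(x)y$ actually gives more than the vanishing equivalence you state. It shows that $xy\in A^2_f$ is sent to the linear form with coefficient vector $H_f(x)y$, which is the precise content of (a). It also makes the symmetry $H_f(x)y=H_f(y)x$, used throughout the paper, an immediate consequence of the commutativity of $\cA_f$.

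The only blemish is notational. You write $y_i$ both for the operator $\partial/\partial x_i$ and for the coordinates of the vector $y$, for instance in $x=\sum x_iy_i$ versus $y=\sum y_j\partial_j$.
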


To conclude this first section, let us make some few comments about the above lemma. From point (a), we see that the incidence correspondence $\Gamma_f$ introduced above (see \eqref{EQ:Gamma}) can also be written as
$$\Gamma_f=\{([x],[y])\in\bP(A_f^1)\times\bP(A^1_f) \ | \ xy=0\}.$$
It is then clear that $\Gamma_f$ is symmetric, in the sense that if $([x],[y])\in\Gamma_f$ then also $([y],[x])\in\Gamma_f$, and so both the projections $\pi_i$ from $\Gamma_f$ are surjective onto the Hessian variety $\cH_f$. With the expression {\it{by symmetry}} in what follows we will mean indeed that 
$$H_f(x)\cdot y=H_f(y)\cdot x.$$
Furthermore, from part (c) of the Lemma above, we get the following property
\begin{equation}
    \label{EQ:notdiagonal}
    V(f)\subset\bP^n \ \mbox{smooth} \quad \iff \quad \Gamma_f\cap\Delta_{\bP^n}=\emptyset \quad \iff \quad \not\exists [x]\in\bP^n \mbox{ such that } [x]\in\iota_f([x]).
\end{equation}

\begin{remark}
     Assume that the points $P_0,\dots,P_n$ give a system of coordinates in $\bP^n$, hence corresponding to a basis $\{y_0,\dots,y_n\}$ in $A_f^1$. Given a cubic form $f$, let us observe that the entries of the Hessian matrix $H_f$ are linear forms. The evaluation of these entries in the point $P_i$ is equivalent to consider the first partial derivative with respect to $x_i$ of the linear form itself.\\
     In other words, for example for $i=0$, we have
     $$H_f(P_0)=\left[\frac{\partial^3f}{\partial x_i\partial x_j \partial x_0}\right]_{i,j=0,\dots,n+1}.$$
     Hence, if for example the $(0,0)-$entry of $H_f(P_0)$ is zero, we have that $y_0^3=0$, i.e. $P_0\in V(f)$, and so the monomial $x_0^3$ does not appear in the expression of $f$, in such a system of coordinates.
\end{remark}

Finally, from the above Lemma \ref{LEM:apolarfacts} (e), one sees that, up to the evaluation of the Hessian matrix of a cubic form $f$, a point $[x]\in\bP^n$ can be thought as of the first partial derivative of $f$ with respect to $x$ itself, which defines a quadric, as element of $\bP(J^2_f)$. In other words, (see also \cite{BFP2}) we have another natural identification of the projective space $\bP^n$, i.e.
$$\bP^n\simeq\bP(J_f^2).$$
Observe then that given a point $[x]\in\cD_k(f)\setminus\cD_{k-1}(f)$, it corresponds to a quadric in $\bP(J_f^2)$ of rank $k$, i.e. to a quadric cone in $\bP^n$, whose vertex coincides exactly with the kernel $\iota_f([x])$ introduced above in Equation \eqref{EQ:iota}.\\

\section{Hessian loci of maximal dimension}
\label{SEC:TS}

Let $[f]\in\bP(S^3_n)$ be a cubic polynomial in $n+1$ variables, whose associated hypersurface $X=V(f)\subset\bP^n$ is smooth, i.e. $[f]\in\cU_n$. As recalled in Section \ref{SEC:preliminaries} (see Theorem \ref{THM:preliminaries}), in this case there is an upper bound for the dimension of the Hessian loci: $\dim(\cD_k(f))\leq k-1$.\\
Moreover, it is known that $f$ has to be of Thom-Sebastiani type if such an upper bound is obtained for $k=0$ (Proposition \ref{PROP:Grenoble}) or for $k=n-1$, with $n\leq5$ (Theorem \ref{THM:TS}).

In this section, we generalize this kind of results, by proving Theorem A: 

\begin{theorem}
\label{THM:Sigmaplus}
    Let $[f]$ be an element of $\cU_n$, with $n\geq3$. Then 
    $$\dim(\cD_2(f))=1 \quad \mbox{implies that} \quad \mbox{either }f \mbox{ is of Thom-Sebastiani type or }\dim(\cD_{n-1}(f))=n-2.$$
    In particular, for $n\leq5$, with the same hypotheses, $f$ is of Thom-Sebastiani type.
\end{theorem}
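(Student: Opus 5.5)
Assume $\dim(\cD_2(f))=1$ and fix an irreducible curve $C\subseteq\cD_2(f)$. By Theorem \ref{THM:preliminaries}(3) we have $\dim(\cD_1(f))\leq0$. If $\cD_1(f)\neq\emptyset$, then $f$ is cyclic by Proposition \ref{PROP:Grenoble}, hence of Thom-Sebastiani type, and we are done. So assume $\cD_1(f)=\emptyset$. Then every $P\in C$ has $\rk H_f(P)=2$, and $\iota_f(P)\simeq\bP^{n-2}$.

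\textbf{Step 1: a family of $(n-2)$-planes in $\cH_f$.} By the symmetry of $\Gamma_f$, every point $Q\in\iota_f(P)$ satisfies $PQ=0$ in $A^2_f$, so $Q\in\cH_f$. Hence each $\iota_f(P)$ is an $(n-2)$-plane inside $\cH_f$. Now consider
$$Y=\bigcup_{P\in C}\iota_f(P)=\pi_2(\pi_1^{-1}(C)).$$
This is an irreducible subvariety of $\cH_f$ of dimension $n-2$ or $n-1$.

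\emph{Case $\dim Y=n-2$.} Then $\iota_f(P)=\Pi$ is the same plane for all $P\in C$. In the apolar language, $P\cdot Q=0$ for all $P\in\langle C\rangle$ and $Q\in\Pi$. Since $C$ is a curve, its linear span $L=\langle C\rangle$ has dimension at least $1$. For $Q\in\Pi$, the kernel $\iota_f(Q)$ contains $L$, so $\rk H_f(Q)\leq n-\dim L$. I will aim to show that $\dim L\geq 2$ is impossible unless $f$ is of TS type. If $\dim L=1$, then $C$ is a line $\ell$ with $\ell\cdot\Pi=0$. In that case, after choosing coordinates adapted to $\ell$ and $\Pi$, the conditions $y_iy_j=0$ for $y_i\in\ell$ and $y_j\in\Pi$ should force $f=g(x_0,x_1)+h(x_2,\dots,x_n)$, with $\ell$ and $\Pi$ the complementary coordinate subspaces. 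I will check this directly: there are no mixed monomials involving both groups. This is exactly the situation of Theorem \ref{THM:charactTS}, since $\ell\subset\cD_2(f)$ is a $\bP^1$. So in this case $f$ is TS.

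\emph{Case $\dim Y=n-1$.} Then $Y$ is a component of $\cH_f$ swept by a one-dimensional family of $(n-2)$-planes. Two distinct planes $\iota_f(P)$ and $\iota_f(P')$ in $\bP^n$ meet in dimension at least $n-4$. I will show that a general point $Q$ of such an intersection, or more generally of the locus where the planes meet, has $\rk H_f(Q)\leq n-1$. Indeed, $Q$ is killed by both $P$ and $P'$, so $\ker H_f(Q)\supseteq\langle P,P'\rangle$. The locus
$$Z=\{Q\in Y \ | \ Q\in\iota_f(P)\cap\iota_f(P') \text{ for some } P\neq P'\in C\}$$
is swept by a two-parameter family of $(n-4)$-planes, so naively it has dimension $n-2$, and it lies in $\cD_{n-1}(f)$. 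This would give $\dim\cD_{n-1}(f)\geq n-2$. Equality then follows from Theorem \ref{THM:preliminaries}.

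\textbf{Main obstacle.} The hard part is controlling the dimension of $Z$ when the intersections degenerate. If all pairwise intersections $\iota_f(P)\cap\iota_f(P')$ coincide with a fixed $(n-4)$-plane, $Z$ collapses. My plan in that case is to show a common $(n-4)$-plane $M$ killed by all of $\langle C\rangle$. Then $\langle C\rangle\cdot M=0$, which again yields a splitting of $f$ via Theorem \ref{THM:charactTS} or a direct coordinate computation, so $f$ is TS. I also need to handle the case where the planes meet in dimension larger than $n-4$ generically; there the family is contained in a fixed hyperplane or pencil, which leads to the same splitting argument.

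\textbf{The case $n\leq5$.} The final statement follows by combining the above with Theorem \ref{THM:TS}: $\dim\Sing(\cH_f)=\dim\cD_{n-1}(f)=n-2$ implies TS type for $n\leq5$.
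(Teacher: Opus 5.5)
Your overall strategy matches the paper's: you reduce to $\cD_1(f)=\emptyset$, view each $\Pi_P=\iota_f(P)$, $P\in C$, as an $(n-2)$-plane in $\cH_f$, and observe that a point of $\Pi_P\cap\Pi_{P'}$ has kernel containing $\langle P,P'\rangle$. The case where $\Pi_P$ is constant is fine; it is even immediate, since then $\Pi\subseteq\cD_{n-1}(f)$ and Theorem~\ref{THM:charactTS} applies.

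\textbf{The gap.} It lies in the case $\dim Y=n-1$, in the degenerate configuration you flag yourself. Suppose all $\Pi_P$ contain a fixed $(n-4)$-plane $M$ and pairwise meet only there. Then $\iota_f(Q)\supseteq\langle C\rangle$ for every $Q\in M$. Smoothness forces $\langle C\rangle\cap M=\emptyset$, so once lines are excluded, $\dim\langle C\rangle\in\{2,3\}$.
\begin{itemize}
\item If $\langle C\rangle\simeq\bP^3$, your splitting works: the affine cones are complementary, or equivalently $M\subseteq\cD_{n-3}(f)$.
\item If $C$ is a plane curve, you only get $M\subseteq\cD_{n-2}(f)$. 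Theorem~\ref{THM:charactTS} needs a $\bP^{n-4}$ inside $\cD_{n-3}(f)$, so it does not apply.
\end{itemize}
The direct computation does not help in the plane-curve case either. In coordinates adapted to $\langle C\rangle$ and $M$, the relation $\langle C\rangle\cdot M=0$ only says $f=g(x_0,x_1,x_2,x_n)+h(x_3,\dots,x_n)$. The two summands share the variable $x_n$, because the two cones have total dimension $n<n+1$. So in this configuration your locus $Z$ collapses to $M$, and you obtain neither TS type nor an $(n-2)$-dimensional piece of $\cD_{n-1}(f)$.

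A similar problem affects your other degenerate case, where the planes meet in codimension one inside a fixed hyperplane. There nothing is annihilated, so no splitting argument is available and a dimension count is needed instead.

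\textbf{The missing idea.} Look at all of $\Pi_P\cap\cD_{n-1}(f)$, not only the part cut out by other planes $\Pi_{P'}$.
\begin{itemize}
\item Since $P\in\iota_f(R)$ for every $R\in\Pi_P$, the Hessian restricted to $\Pi_P$ has a zero row and column and an $n\times n$ block $M_P$.
\item $\det M_P\not\equiv 0$, because $\Pi_P\not\subseteq\cD_{n-1}(f)$; otherwise $f$ is TS by Theorem~\ref{THM:charactTS}.
\item Hence $Z_P=V(\det M_P)$ is an $(n-3)$-dimensional hypersurface of $\Pi_P$ contained in $\cD_{n-1}(f)$, however the planes meet.
\end{itemize}
This gives a clean dichotomy.
\begin{itemize}
\item If $Z_P$ moves with $P$, the union $\bigcup_P Z_P$ is $(n-2)$-dimensional.
\item If $Z_P$ is constant, it lies in two distinct $(n-2)$-planes; distinct $P$ give distinct $\Pi_P$, again by Theorem~\ref{THM:charactTS}. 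So it is an $(n-3)$-plane annihilated by $C$, hence contained in $\cD_{n-2}(f)$, and Theorem~\ref{THM:charactTS} gives TS type.
\end{itemize}
The key point is that the constant piece is now an $(n-3)$-plane rather than an $(n-4)$-plane, which is exactly what makes the dimensions match in Theorem~\ref{THM:charactTS}. In your configuration the constant alternative cannot occur, since the pairwise intersections would then contain an $(n-3)$-plane. So $Z_P$ must move there, and $\dim\cD_{n-1}(f)=n-2$ follows.
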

 Observe that the claim is not true for $n=2$: indeed in this specific case we deal with a smooth cubic plane curve $V(f)$ and $\cD_2(f)$ coincide with the Hessian curve $\cH_f$, which is always of dimension $1$. If we suppose that $f$ is not of Thom-Sebastiani type, i.e. not of Fermat type, it's not true that there exists a point in the Hessian locus $\cD_1(f)=\Sing(\cH_f)$, since $\cH_f$ is generically smooth.
\begin{proof}
    First of all, let us observe that, from above, there is nothing to prove in the case where $n=3$. Let us then assume $n\geq4$.\\
    Let us assume that $f$ is not of Thom-Sebastiani type: we want to prove that $\dim(\cD_{n-1})=n-2$.\\
    Let $C$ be a component of $\cD_2(f)$ with $\dim(C)=1$. Since $f$ is not of TS type, we can assume (by Theorem \ref{THM:charactTS}) that $C\not\simeq\bP^1$ and that $\cD_1(f)=\emptyset$  (by Proposition \ref{PROP:Grenoble}). This last assumption implies that for any point $P\in C$ we have that $\Pi_P:=\iota_f(P)\simeq\bP^{n-2}$. Observe that if $P_1$ and $P_2$ are two distinct points of $C$, then necessarily $\Pi_{P_1}$ and $\Pi_{P_2}$ are distinct. Indeed, if by contradiction $\Pi:=\Pi_{P_1}=\Pi_{P_2}\simeq\bP^{n-2}$ then, by symmetry, for any $Q\in\Pi$ we have that $\iota_f(Q)\supset\bP(\left\langle P_1,P_2\right\rangle)$, i.e. $\Pi\subset\cD_{n-1}(f)$ and so again by Theorem \ref{THM:charactTS}, $f$ would be of TS type, against our assumptions. For the same reason, one can see that for any point $P\in C$, the $(n-2)$-plane $\Pi_P$ is not contained in the singular locus $\cD_{n-1}(f)$.\\
    Let us now fix a point $P\in C$ and, up to a change of coordinates, let us write it as $P=P_0=[1:0:\dots:0]$. Consider also the associated $(n-2)$-plane $\Pi_P$ and $R_1,\dots,R_{n-1}$ general points of $\Pi_P$ in general position. Let us then write the Hessian matrix of $f$ evaluated at the general point of $\Pi_P$ as
$$H_{f_{|\Pi_P}}=H_f\left(\sum_{i=1}^{n-1}\lambda_iR_i\right)=\begin{pmatrix}
    0&\underline{0}\\\underline{0}&M_P)
\end{pmatrix}$$
   since by construction $P_0$ belongs to $\iota_f(R)$ for any $R\in\Pi_P$ and where $M_P$ is an $n\times n$ matrix, whose entries are depending on the $\lambda_i$'s. Since, $\Pi_P\not\subset\cD_{n-1}(f)$ as observed above, we have that the determinant $\det(M_P)$ is not identically zero. Hence, we can define 
   $$Z_P:=V(\det(M_P))\subseteq\cD_{n-1}(f),$$
   which is a subvariety of $\Pi_P$ of dimension $n-3$.\\
   We then have two possibilities: either $Z_P$ is constant with $P$ varying in $C$, or it is not. In the first case, we would have that there exists a variety $Z$ of dimension $n-3$ such that $Z=Z_P$ for $P\in C$ general. But then $Z$ would be contained in distinct $(n-2)$-planes: we have $Z\simeq\bP^{n-3}$. Moreover, being contained in $\Pi_P=\iota_f(P)$ for $P\in C$ general, by symmetry we would have that for any $Q\in Z$, $\iota_f(Q)\supset C$, i.e. $\iota_f(Q)\simeq \bP^s$, with $s\geq2$. This implies that $Z\simeq\bP^{n-3}\subseteq\cD_{n-2}(f)$: $f$ would be of TS type by Theorem \ref{THM:charactTS}, against our assumptions.\\
   The only possibility is that $Z_P$ varies with $P$, i.e. $\dim(\bigcup_{P\in C}Z_P)=n-2$. Since by construction, $\bigcup_{P\in C}Z_P\subseteq\cD_{n-1}(f)$, we have the claim.\\
   The second and last statement follows immediately from Theorem \ref{THM:TS}.
\end{proof}
 Observe that the dichotomy in the thesis of the first statement is not necessarily strict: we actually think that the two possible theses are equivalent to each other (as Theorem \ref{THM:TS} shows for $n\leq5$).

\begin{remark}
    Let us observe that the converse does not hold. Moreover, it is not true in general that given a smooth cubic hypersurface $V(f)\subset\bP^n$, if there is a Hessian loci $\cD_k(f)$ of maximal dimension, i.e. $k-1$, then the same holds for the Hessian loci of ``higher level'', as $\cD_{k+1}(f)$. The following example gives an evidence to both these statements.\\
    It is known (see \cite{Seg},\cite{CO},\cite{DVG}...) that for the general smooth cubic surface $V(g)\subset\bP^3$, then its associated hessian variety is singular in $10$ distinct points, say $Q_1,\dots,Q_{10}$, belonging then to $\cD_2(g)$ and one has $\cD_1(g)=\emptyset$, as $g$ is general. Let us now consider a cyclic cubic threefold $V(f)\subset\bP^4$, defined by
    $$f(x_0,x_1,x_2,x_3,x_4)=x_0^3+g(x_1,x_2,x_3,x_4),$$
    with $g$ as above. Since it is cyclic, one can see that 
    $$\cD_1(f)=\{P_0\}, \mbox{ i.e. } 0=\dim(\cD_1(f))=1-1.$$
    On the other hand, one can see that
    $$\cD_2(f)=\{P_0,Q_1,\dots,Q_{10}\}, \mbox{ i.e. } \dim(\cD_2(f))=0.$$
\end{remark}

\section{Linear spaces in Hessian loci}
\label{SEC:linearspaces}

In this section, we focus our attention on linear subvarieties contained in the Hessian loci associated with a smooth cubic hypersurface and we prove Theorem B from the Introduction.\\
First of all, we introduce a family of cubic hypersurfaces, which will play a key role in what follows. This family is defined by the linear system
\begin{equation}
    \label{EQ:linearsystem}
    (\star) \qquad |\Sym^3W+\left\langle x_{n-2}^3\right\rangle+x_n^2\cdot\left\langle x_0\dots,x_{n-1}\right\rangle+x_n\cdot(\Sym^2W+\alpha x_{n-2}^2+\beta x_{n-1}^2+\gamma x_{n-3}x_{n-2})|,
\end{equation}
where $W=\left\langle x_0,\dots, x_{n-3}\right\rangle$.

\begin{remark}
    \label{REM:familysmooth}
    Observe that, by a standard Bertini argument, one can easily prove that the general element of the above linear system $(\star)$ defines a smooth hypersurface in $\bP^n$, with $n\geq3$.
\end{remark}

\begin{theorem}
\label{THM:linearspaces}
    Let $X=V(f)\subseteq\bP^n$, with $n\geq2$ be a smooth cubic hypersurface defined by a polynomial not of Thom-Sebastiani type. Assume that there exists $\Pi\simeq\bP^{n-2}$ contained in the Hessian locus $\cH_f$, then 
\begin{itemize}
    \item either there exists a point $P\in\cH_f$ such that $\Pi=\iota_f(P)$,
    \item  or, up to a suitable change of coordinates, $f$ can be written as an element of the linear system $(\star)$.
\end{itemize}
\end{theorem}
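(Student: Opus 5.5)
We work throughout in the apolar picture of Lemma \ref{LEM:apolarfacts}, identifying $\bP^n\simeq\bP(A^1_f)$. Then $\Pi\subset\cH_f$ means that every $x\in\Pi$ has a partner $[y]$ with $xy=0$ in $A^2_f$. Consider the restricted correspondence $\Gamma_\Pi:=\pi_1^{-1}(\Pi)\subset\Gamma_f$ and its image $\Sigma:=\pi_2(\Gamma_\Pi)=\bigcup_{R\in\Pi}\iota_f(R)$.

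Since $\cD_1(f)=\emptyset$ (Proposition \ref{PROP:Grenoble}, as $f$ is not TS) and $\dim\cD_k(f)\le k-1$, the general $R\in\Pi$ has $\iota_f(R)$ a point. So $\Gamma_\Pi$ has a main component of dimension $n-2$, and $\Sigma$ has dimension at most $n-2$. We split into two cases according to $\dim\Sigma$.

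\emph{Case $\dim\Sigma=0$.} Then there is a single point $P$ with $RP=0$ for all $R\in\Pi$. By symmetry $\Pi\subseteq\iota_f(P)$, so $P\in\cH_f$.
\begin{itemize}
\item If $\iota_f(P)=\Pi$, we are in the first alternative.
\item Otherwise $\iota_f(P)\supsetneq\Pi$ has dimension at least $n-1$, so $P\in\cD_1(f)$. This is excluded.
\end{itemize}

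\emph{Case $\dim\Sigma\ge1$.} This is the main case. Choose coordinates with $\Pi=V(x_{n-1},x_n)$, spanned by $P_0,\dots,P_{n-2}$. For $R=\sum_{i\le n-2}\lambda_iP_i$, the block decomposition gives
\[
H_f(R)=\begin{pmatrix}A(\lambda)&B(\lambda)\\ B(\lambda)^t&C(\lambda)\end{pmatrix},
\]
where $A$ is $(n-1)\times(n-1)$ and all entries are linear in $\lambda$. These entries are exactly the third derivatives of $f$ with at least one index in $\{0,\dots,n-2\}$. The condition $\det H_f(R)\equiv0$ is then a polynomial identity in $\lambda$.

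We exploit it through the kernel map $R\mapsto\iota_f(R)$, which is rational of degree $\le n-1$ in $\lambda$, and its image $\Sigma$. Several geometric constraints apply:
\begin{itemize}
\item By \eqref{EQ:notdiagonal}, no $R$ lies in its own kernel.
\item By symmetry, any point $Q\in\Sigma\cap\Pi$ forces a linear condition.
\item If $\Sigma$ contains a linear space $L$ of dimension $k$ whose points all annihilate a $\bP^{k}$, Theorem \ref{THM:charactTS} gives a TS decomposition. We must avoid this.
\end{itemize}

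The strategy is to show that, unless $f$ is TS, $\Sigma$ must be a line meeting $\Pi$ in a specific way. We then normalize:
\begin{itemize}
\item the kernel $\iota_f(R)$ always lies in $\langle P_{n-2},P_{n-1},P_n\rangle$-type positions;
\item the $W=\langle x_0,\dots,x_{n-3}\rangle$ directions pair only with $P_n$.
\end{itemize}
Translating the vanishing of the corresponding entries $y_iy_jy_k(f)=0$ into the vanishing of monomial coefficients (as in the Remark after Lemma \ref{LEM:apolarfacts}) then kills every monomial outside those listed in $(\star)$. The surviving monomials are $\Sym^3W$, $x_{n-2}^3$, $x_n^2\ell$, and the $x_n$-quadratic terms with the residual parameters $\alpha,\beta,\gamma$. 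We carry this out first for $n=3$ (surfaces, where $\Pi$ is a line and one can list the possibilities directly), and then in general by induction or by restricting to general planes through $\Pi$.

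The main obstacle is the middle step: proving that the one-dimensional family of kernels has the rigid shape required, that is, that $\Sigma$ is forced into a line or a low-dimensional linear span, rather than being a curved or higher-dimensional image. For this I would first try the following. Pick a general line $\ell\subset\Pi$. On $\ell$, $\det H_f$ vanishes identically, so the kernel vector $v(\lambda)$ is given by the adjugate columns, which are polynomial in $\lambda$ of degree $\le n$. Then use the identity $H_f(R)v(\lambda)=0$ differentiated in $\lambda$, combined with the symmetry $H_f(x)y=H_f(y)x$, to get $H_f(v(\lambda))R'=0$, i.e. $\ell\subseteq\iota_f(v(\lambda))$ up to first order. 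This forces each $v(\lambda)$ into $\cD_{n-1}(f)$, and bounding these via Theorem \ref{THM:charactTS} and the non-TS assumption should pin down the configuration.
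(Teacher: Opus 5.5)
Your constant-kernel case matches the paper, with two small fixes. First, $\Sigma$ should be the image of the main component of $\Gamma_\Pi$ only, since points of $\Pi\cap\cD_{n-1}(f)$ have kernels of positive dimension. Second, the claim that the general point of $\Pi$ has a $0$-dimensional kernel does not follow from $\cD_1(f)=\emptyset$ and $\dim\cD_{n-1}(f)\le n-2$, which still allow $\Pi$ to be a component of $\cD_{n-1}(f)$; you need Theorem \ref{THM:charactTS} here.

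The real gap is the main case. There you describe the expected outcome without proving it, and the mechanism you propose fails. Differentiating $H_f(R_0+\lambda R_1)v(\lambda)=0$ and using symmetry gives $H_f(v)R_1=-H_f(R)v'$. The right-hand side vanishes iff $v'\in\ker H_f(R)=\langle v\rangle$, i.e.\ iff $[v(\lambda)]$ is constant along the line, which is exactly the case you have already handled. When the kernels move, $H_f(v(\lambda))R_1\neq0$, so the conclusion $v(\lambda)\in\cD_{n-1}(f)$ is unfounded. For $n=3$ it would even put a curve into $\cD_2(f)$ and make $f$ TS by Theorem \ref{THM:TS}, so the family $(\star)$ could never occur, whereas the paper shows it does. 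What the computation legitimately gives, after pairing with $v$, is $v^tH_f(R_1)v=0$. Doing this in every direction of $\Pi$ shows that the vertex lies on every quadric of the linear system $\Pi\subset\bP(J^2_f)$, i.e.\ in its base locus.

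That base-locus fact is the first of several ingredients you are missing.
\begin{enumerate}
\item Since $J_f$ is generated by a regular sequence, $n-1$ independent quadrics of $\Pi$ cut out a curve (Equation \eqref{EQ:baselocus}). Hence the closure $\cV$ of the kernel map has $\dim\cV\le1$. Without this, your case $\dim\Sigma\ge1$ ranges up to $n-2$ with no handle on it.
\item When $\cV$ is a curve, the fibers $Y_A=\iota_f(A)\cap\Pi$ are hyperplanes of $\Pi$. They must all contain a fixed $\Lambda\simeq\bP^{n-4}$, since otherwise $\Pi\subseteq\cD_{n-1}(f)$. Because $\iota_f(z)\supseteq\cV$ for $z\in\Lambda$, this, combined again with the base-locus bound, forces $\cV\simeq\bP^1$. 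For $n=3$ one argues instead that secant lines of $\cV$ lie in the base locus.
\item One must then locate $\cV$ relative to $\Pi$. The case $\cV\subset\Pi$ is impossible because a $1$-dimensional correspondence in $\cV\times\cV\simeq\bP^1\times\bP^1$ meets the diagonal, contradicting \eqref{EQ:notdiagonal}. The case $\cV\cap\Pi=\emptyset$ is impossible because Gorenstein duality in $\cA_f$ forces $\cV\subseteq\cD_2(f)$, hence TS by Theorem \ref{THM:charactTS}.
\item Only now is the configuration rigid: the point $R=\cV\cap\Pi$ has $\iota_f(R)\simeq\bP^{n-2}$. The relation $(y_{n-3}+ty_{n-1})(y_{n-1}+\mu(t)y_{n-2})=0$, multiplied by the $y_i$, yields the cubic vanishings. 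Together with the linear ones, these cut out $(\star)$.
\end{enumerate}
Your normalizations (kernels in ``$\langle P_{n-2},P_{n-1},P_n\rangle$-type positions'', $W$ pairing only with $P_n$) and your appeal to ``induction or restricting to general planes through $\Pi$'' are assertions standing in for exactly these steps.
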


First of all, observe that the statement holds trivially for the case of curves in $\bP^2$: here, the linear space $\Pi$ from the statement is a point $Q\in\cH_f$, for which there exists another point $P\in\cH_f$ such that $Q=\iota_f(P)$. For the proof we will consider then $n\geq3$.

\begin{remark}
    The assumption that $f$ needs not to be of Thom-Sebastiani type will simplify the proof and the construction for arbitrary $n$. However, for low dimensions ($n\leq5$), we will show that an analogous result holds also for polynomials of TS type.\\
    Moreover, observe that such an $(n-2)$-plane $\Pi$ contained in $\cH_f$ can't be contained in $\Sing(\cH_f)$, otherwise by Theorem \ref{THM:charactTS} $f$ would be of TS type, against the assumptions.
\end{remark}

Let us see that something can be said also in the case where $f$ is of Thom-Sebastiani type:
\begin{remark}
\label{REM:Fermat}
    In the case where $f$ is a polynomial of TS type, it can happen that the $(n-2)$-plane considered in the statement of the above Theorem \ref{THM:linearspaces} is only strictly contained in a kernel $\iota_F(P)$ for some $P\in\cH_f$. For example, let $f=\sum_{i=0}^nx_i^3$ be the Fermat cubic polynomial in $n+1$ variables. If there exists an $(n-2)$-plane $\Pi\simeq\bP^{n-2}$ contained in the Hessian variety $\cH_f$, then there exists a point $P\in\cH_f$ such that $\Pi\subseteq\iota_f(P)$. This is obvious by observing that $\cH_f$ consists of the union of the $n+1$ coordinate hyperplanes and for each $i=0,\dots,n$ we have $V(x_i)=\iota_f(P_i)$, where $P_i$ is the $i-$th coordinate point. Then if $\Pi$ is not contained in $\Sing(\cH_f)$, it is contained in just one of the hyperplanes above and so the claim follows. If it is contained in the singular locus of $\cH_f$, it belongs to the intersection of two kernels, namely $\iota_f(P_i)$ and $\iota_f(P_j)$ for some $i,j\in\{0,\dots,n\}$, with $i\neq j$.
\end{remark}

The following Lemma deals with the case of $(n-2)$-planes contained in the singular locus of Hessian varieties associated with cyclic cubic forms in $\cU_n$.
\begin{lemma}
\label{LEM:TSSing}
    Let $n\leq5$ and let $f$ be a cubic cyclic polynomial in $\cU_n$. If there exists an $(n-2)$-plane $\Pi\simeq\bP^{n-2}$ contained in $\Sing(\cH_f)$, then there exists a point $P\in\cH_f$ such that $\Pi\subseteq\iota_f(P)$.
\end{lemma}

\begin{proof}
    Clearly, there is nothing to prove for $n=2$: let us set $n\in\{4,5\}$ for the moment.\\
    We can write $f=x_0^3+f_1(x_1,\dots,x_n)$: the Hessian hypersurface $\cH_f$ consists of the hyperplane $V(x_0)$ and of the cone $\tilde{\cH}_{f_1}$ of $\cH_{f_1}$ over the point $P_0=[1:0:\dots:0]$ (thinking of $V(f_1)$ as a hypersurface in $\bP^{n-1}$). Observe that $V(x_0)=\iota_f(P_0)$. One can then see that $\Sing(\cH_f)=\cD_{n-1}(f)$ consists of the intersection $V(x_0)\cap\tilde{\cH}_{f_1}$ and of the joint variety $J(P_0,\Sing(\cH_{f_1}))$. Let us then assume that we have $\Pi\simeq\bP^{n-2}$ contained in $\Sing(\cH_f)$: if $\Pi\subseteq V(x_0)\cap\tilde{\cH}_{f_1}$ then in particular it is contained in $V(x_0)=\iota_f(P_0)$, hence the claim follows. Assume then that $\Pi$ is contained in $J:=J(P_0,\Sing(\cH_{f_1}))$: from this it follows that $f_1$ has to be of TS type too. Indeed, otherwise, $\dim(\Sing(\cH_{f_1}))=n-4$ and so $\dim(J)=n-3$: it can't contain an $(n-2)$-plane. Then, since $f_1$ depends on $n\leq5$ variables, the only possibility is that $f_1$ is cyclic. Hence, we have that $\dim(\Sing(\cH_{f_1}))=n-3$, and so $\dim(J)=n-2$: the $(n-2)$-plane is then a component of $J$. We reduce ourselves to look for a $(n-3)-$plane in $\Sing(\cH_{f_1})$, for $f_1$ cyclic. By iterating the argument one reduces to the case where $n=3$. Here, we can write $f=x_0^3+f_1(x_1,x_2,x_3)$; we have then two possibilities: either $f$ is the Fermat polynomial (if $f_1$ is itself cyclic) or $\Sing(\cH_{f_1})=\emptyset$. In the first case, the claim follows from Remark \ref{REM:Fermat}; while in the second case, a line contained in $\Sing(\cH_f)$ has to be contained in $V(x_0)\cap\tilde{\cH}_{f_1}$: this means that $\cH_{f_1}$ is reducible and so $f_1$ is cyclic, i.e. $f$ is again of Fermat type.
\end{proof}


We will now prove the above Theorem \ref{THM:linearspaces} in the following two subsections, where we will split up the case of surfaces and the cases of higher dimension. The technique for the proof of the two cases is similar, but, for $n\geq4$, we will identify a peculiar $(n-4)$-plane, which will be crucial and which does not appear in the case of cubic surfaces in $\bP^3$.

\subsection{Surfaces}
\label{SUBS:Surfaces}
Let us start by dealing with the study of lines contained in the Hessian variety of cubic surfaces in $\bP^3$, and in particular, with the cyclic case.

\begin{lemma}
\label{LEM:cyclicsurfaces}
    Let $V(f)\subset\bP^3$ be a smooth cubic surface defined by a cyclic polynomial $f$. If there exists a line $L\subseteq\cH_f$ then there exists a point $P\in\cH_f$ such that $L\subseteq\iota_f(P)$.
\end{lemma}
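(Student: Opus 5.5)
Write $f=x_0^3+g(x_1,x_2,x_3)$ after a change of coordinates, as allowed by the cyclicity assumption. Since $V(f)$ is smooth, $V(g)\subset\bP^2$ is a smooth plane cubic. The Hessian matrix is block diagonal, $H_f=\diag(6x_0,H_g)$, so $h_f=6x_0\,h_g(x_1,x_2,x_3)$. This gives
$$\cH_f=V(x_0)\cup\tilde{\cH}_g,$$
where $\tilde{\cH}_g$ is the cone with vertex $P_0=[1:0:0:0]$ over the Hessian curve $\cH_g\subset\bP^2\simeq V(x_0)$. We have $V(x_0)=\iota_f(P_0)$, since $H_f(P_0)=\diag(6,0,0,0)$. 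I will argue by cases according to which component contains $L$.

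\emph{Case 1: $L\subseteq V(x_0)$.} Then $L\subseteq\iota_f(P_0)$ and we take $P=P_0$.

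\emph{Case 2: $L\subseteq\tilde{\cH}_g$ and $P_0\notin L$.} Projecting from $P_0$ maps $L$ isomorphically onto a line contained in $\cH_g$. So the Hessian curve of the smooth plane cubic $g$ contains a line and is reducible. By the classical description of Hessians in the Hesse pencil, this forces $\cH_g$ to be a triangle and $g$ to be projectively equivalent to the Fermat cubic. (Alternatively, a line in $\cH_g$ gives a $\bP^1$-worth of kernels, and this forces $g$ to be of Thom–Sebastiani type by the same kind of argument as Theorem \ref{THM:charactTS}.) Then $f$ is the Fermat cubic in four variables, and Remark \ref{REM:Fermat} gives a point $P$ with $L\subseteq\iota_f(P)$.

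\emph{Case 3: $L\subseteq\tilde{\cH}_g$ and $P_0\in L$.} Then $L=\langle P_0,Q\rangle$ with $Q\in\cH_g\subset V(x_0)$. Since $Q\in\cH_g$, there is a point $Q'\in V(x_0)$, namely a point of $\iota_g(Q)$, with $H_g(Q')\cdot Q=0$ by the symmetry $H_g(x)y=H_g(y)x$. Regarding $Q'$ as a point of $\bP^3$ with $x_0=0$, we get $H_f(Q')=\diag(0,H_g(Q'))$. Its kernel contains both $P_0$ and $Q$. Hence $L\subseteq\iota_f(Q')$, and $Q'\in\cH_f$ because this kernel is nontrivial.

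The only delicate step is Case 2: justifying that a smooth plane cubic whose Hessian contains a line is of Fermat type. I would cite the classical Hesse pencil computation: in $x^3+y^3+z^3+6a\,xyz$, the Hessian is a triangle exactly for the members projectively equivalent to the Fermat cubic. Everything else is the block-diagonal structure of $H_f$.
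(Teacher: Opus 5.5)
Your proof is correct and takes essentially the same route as the paper. Both use the decomposition $\cH_f=V(x_0)\cup\tilde{\cH}_g$ with $V(x_0)=\iota_f(P_0)$, reduce the reducible-Hessian case to the Fermat cubic via Remark \ref{REM:Fermat}, and handle lines through the vertex with a point of $\iota_g(Q)$. The only differences are that the paper discards the Fermat case first (so $\cH_g$ is smooth and irreducible and every line in the cone passes through $P_0$), whereas you split on whether $P_0\in L$, and that you justify the Fermat reduction explicitly; your parenthetical alternative is in fact just Theorem \ref{THM:charactTS} with $n=2$, $k=1$ applied to $\cD_2(g)=\cH_g\supseteq\bP^1$.
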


\begin{proof}
    Let us write $f=x_0^3+f_1(x_1,x_2,x_3)$. By Remark \ref{REM:Fermat} we can assume that $f_1$ is not of Fermat type, i.e. not of TS type: in particular $\cH_{f_1}$ is a curve of degree $3$, irreducible and smooth. We have, as above, that $\cH_f=V(x_0)\cup\tilde{\cH}_{f_1}$, where $V(x_0)=\iota_f(P_0)$, with $P_0=[1:0:0:0]$, and $\tilde{\cH}_{f_1}$ is the cone of $\cH_{f_1}$ over $P_0$. Let $L$ be a line contained in $\cH_f$: if $L\subset V(x_0)$, we get the claim. Assume then that $L\subset\tilde{\cH}_{f_1}$: since by construction the section $\cH_{f_1}$ is smooth and irreducible, the only possibility is that there exists a point $Q\in\cH_{f_1}$ such that $L$ is the line joining $Q$ and $P_0$. Observe now that considering the curve $V(f_1)$, there exists a point $P\in\cH_{f_1}$ such that $\iota_{f_1}(P)=Q$. In particular, we have that $L=\iota_f(P)$, as claimed.
\end{proof}

Let us now consider the general case and let us prove that Theorem \ref{THM:linearspaces} holds for $n=3$:

\begin{proposition}[Theorem \ref{THM:linearspaces}, for $n=3$]
\label{PROP:surfacecase}
    Let $S=V(f)\subseteq\bP^3$ be a smooth cubic surface not of Thom-Sebastiani type. Let $L\simeq\bP^1$ be a line contained in $\cH_f$, then
    \begin{itemize}
        \item either there exists a point $P\in\cH_f$ such that $L=\iota_f(P)$, 
        \item or, up to a suitable change of coordinates, $f$ can be written as
        \begin{equation}
\label{EQ:specialsurface}
    \tilde{f}=\alpha_0x_0^3+\alpha_1x_1^3+x_3(\beta_0x_0^2+\beta_1x_1^2+\beta_2x_2^2+\beta_3x_0x_1)+x_3^2(\gamma_0x_0+\gamma_1x_1+\gamma_2x_2).
\end{equation}
    \end{itemize}
\end{proposition}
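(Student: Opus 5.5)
We work in the apolar picture of Lemma \ref{LEM:apolarfacts}, where points of $\bP^3$ are elements of $A_f^1$, $\cH_f$ is the set of $[x]$ with $xy=0$ for some $[y]$, and $\iota_f(x)$ is $\bP$ of the annihilator of $x$ in $A^1_f$.

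Let $L\subseteq\cH_f$ be a line. Since $f$ is not of TS type, $L\not\subseteq\Sing(\cH_f)=\cD_2(f)$ by Theorem \ref{THM:charactTS}, and $\cD_1(f)=\emptyset$ by Proposition \ref{PROP:Grenoble}. So for a general $Q\in L$ the kernel $\iota_f(Q)$ is a single point $\sigma(Q)$. This gives a rational map $\sigma\colon L\dashrightarrow\cH_f$, whose image $\Sigma=\overline{\sigma(L)}$ is either a point or a curve.

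\textbf{Case $\Sigma$ a point $P$.} Then $QP=0$ for all $Q\in L$. By symmetry $L\subseteq\iota_f(P)$. Since $P\notin\cD_1(f)$, $\iota_f(P)$ has dimension at most $1$, so $L=\iota_f(P)$ and we are in the first alternative.

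\textbf{Case $\Sigma$ a curve.} Here we aim for the normal form \eqref{EQ:specialsurface}. Parametrize $L=\{\lambda R_1+\mu R_2\}$. The relation $(\lambda R_1+\mu R_2)\,\sigma(\lambda,\mu)=0$ in $A^2_f$ holds identically, with $\sigma$ a vector of binary forms of some degree $e\ge1$, taken coprime. Note that $H_f$ restricted to $L$ is a $4\times4$ symmetric matrix of binary linear forms with identically vanishing determinant. Generically its rank is $3$, and $\sigma$ is its kernel vector, so by Cramer's rule $e\le 3$ (the adjugate has cubic entries).

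The key step is to use the symmetry $H_f(x)y=H_f(y)x$ and the smoothness condition \eqref{EQ:notdiagonal} to pin down $e$ and the geometry of $\Sigma$.

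\begin{itemize}
\item If $e=1$, then $\Sigma$ is a line $L'$ and $\sigma\colon L\to L'$ is an isomorphism with $Q\,\sigma(Q)=0$.
\item If $e\geq 2$, I expect a contradiction. Writing out the quadratic and cubic identities coefficientwise should force $\Sigma$ to lie in $\cD_2(f)$ or give a point with $x^2=0$, contradicting smoothness.
\end{itemize}

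This degree analysis is the step I expect to be the main obstacle, and I would handle it by direct computation on the coefficient identities.

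Assuming $e=1$, choose coordinates so that $L=\langle P_0,P_1\rangle$ and $L'=\langle P_2,P_3\rangle$, i.e.\ $L=V(x_2,x_3)$ and $L'=V(x_0,x_1)$, with $\sigma(\lambda P_0+\mu P_1)=\lambda P_2+\mu P_3$ after a projectivity on $L'$. The identity $(\lambda y_0+\mu y_1)(\lambda y_2+\mu y_3)=0$ gives
\[
y_0y_2=0,\qquad y_1y_3=0,\qquad y_0y_3+y_1y_2=0 \quad\text{in } A^2_f.
\]
These annihilate the corresponding third derivatives of $f$. So the monomials $x_0x_2x_j$ and $x_1x_3x_j$ vanish for all $j$, and the coefficients of $x_0x_3x_j$ and $x_1x_2x_j$ are opposite.

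Reading off the surviving monomials of the cubic should give a short linear system. Possibly after relabelling $x_2\leftrightarrow x_3$ and a further linear change of the pairs $(x_0,x_1)$ and $(x_2,x_3)$, such as eliminating cross terms with the mixed relation, this should be exactly the shape \eqref{EQ:specialsurface}. Finally, I would check that the remaining freedom can be absorbed, and that smoothness and the non-TS assumption are consistent with the coefficients (cf.\ Remark \ref{REM:familysmooth}).
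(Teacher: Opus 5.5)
Your treatment of the case where $\Sigma$ is a point is fine and matches the paper. The curve case, however, has a genuine gap, and the configuration you end up computing with cannot occur.

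First, the step $e\ge 2$ is only announced, not proved. The paper does not argue via the degree of $\sigma$ at all; it shows directly that $\Sigma$ is a line. Regard $L$ as a pencil of quadrics in $\bP(J^2_f)$. By Bertini, the vertices, and hence $\Sigma$, lie in the base locus $\cB=Q_1\cap Q_2$, which is a curve because the partials of $f$ form a regular sequence. If $v_1,v_2$ are the vertices of $Q_1,Q_2$, the line joining them lies on both cones. So all secants of $\Sigma$ lie in $\cB$, which forces $\Sigma$ to be a line. After this, the degree of $\sigma$ never matters; what matters is the relative position of $L$ and $L'=\Sigma$.

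Second, and more seriously, choosing $L=\langle P_0,P_1\rangle$ and $L'=\langle P_2,P_3\rangle$ tacitly assumes $L\cap L'=\emptyset$. Your own relations show this is impossible:
\begin{itemize}
\item $y_0y_2=y_1y_3=0$ kill every monomial divisible by $x_0x_2$ or $x_1x_3$;
\item among the survivors, $\partial_0\partial_3 f$ only sees $x_0^2x_3,\,x_0x_3^2$, and $\partial_1\partial_2 f$ only sees $x_1^2x_2,\,x_1x_2^2$;
\item so $\partial_0\partial_3f=-\partial_1\partial_2f$ forces these four coefficients to vanish, leaving $f=g(x_0,x_1)+h(x_2,x_3)$, which is of TS type.
\end{itemize}
Hence no change of coordinates will produce \eqref{EQ:specialsurface}.

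The cases you omit are the relevant ones. The case $L=L'$ is excluded because the closure of $\{(Q,\sigma(Q))\}$ in $L\times L\simeq\bP^1\times\bP^1$ is a curve meeting the diagonal, contradicting \eqref{EQ:notdiagonal}. The case $L\cap L'=\{P\}$ is the actual one. There, $\iota_f(P)$ contains $\sigma(P)\in L'$ and, by symmetry, the point $Q\in L$ with $\sigma(Q)=P$. These two points are distinct and both differ from $P$, so $\iota_f(P)$ is a line and $P\in\cD_2(f)$.

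Now take $P=P_2$, $P_0=Q$, $P_1=\sigma(P)$, and $P_3\in V(f)$. This gives:
\begin{itemize}
\item $y_0y_2=y_1y_2=0$;
\item $y_2^3=y_0y_1^2=0$, since $L'\subseteq\cB$;
\item $y_0^2y_1=0$: viewing $A^3_f\simeq\bK$, $\det H_f(P_2+tP_0)$ is up to a constant $t^2(y_0^2y_1)^2(y_2^2y_3)^2$, and $y_2^2y_3\neq0$ since otherwise $P_2\in\cD_1(f)$;
\item $y_3^3=0$.
\end{itemize}
These conditions leave exactly the nine monomials of $\tilde f$.
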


Observe that $\tilde{f}$ is exactly the general element of the system $(\star)$ (see Equation \eqref{EQ:linearsystem}) of Theorem \ref{THM:linearspaces}.

\begin{proof}
    First of all, observe that the line $L$ is not contained in $\Sing(\cH_f)=\cD_2(f)$. Indeed, otherwise, by Theorem \ref{THM:TS}, $f$ would be of TS type, against the assumptions.\\
    If we then fix a line $L$ contained in the Hessian surface $\cH_f$, we have that its general point $P$ is smooth for $\cH_f$ and $\iota_f(P)$ consists of a single point. Let us parametrize $L$ as $A+tB$, where $A$ and $B$ lie in $\cH_f\setminus\cD_2(f)$. For every $t\in\bK$ there exists a point $v(t)$, still in the Hessian $\cH_f$, which belongs to the kernel of the Hessian matrix $H_f$ evaluated in the point $A+tB$, i.e.  
    $$H_f(A+tB)\cdot v(t)=0 \qquad \mbox{or equivalently} \qquad (A+tB)(v(t))=0.$$
    Observe that such element $v(t)$ is unique (up to scalar) if the corresponding point $A+tB\not\in\cD_2(g)$. We can then construct a dominant rational map $\tilde{\varphi}:L\dashrightarrow\cH_f$, mapping a point $A+tB$ to $\iota_f(A+tB)=[v(t)]$. If $\tilde{\varphi}$ is constant, then by symmetry we get the claim: if $\tilde{\varphi}(L)=\{Q\}$, then we would have that $L\subseteq\iota_f(Q)$. In particular, we would have $L=\iota_f(Q)$: indeed, if the inclusion is strict, we have that $\iota_f(Q)\simeq\bP^2\subseteq\cH_f$, i.e. $\cH_f$ is reducible and so $f$ is of TS type (by Theorem \ref{THM:TS}), which is against the assumptions.\\
    Let us then assume that this is not the case, i.e. that the closure of the image is a rational curve $\cV$ and consider the extension $\varphi:L\rightarrow\cV$. We want to prove that $f$ can be written as in Equation \eqref{EQ:specialsurface}.
    \smallskip
    \ \\
    {\underline{Claim: $\cV$ is a projective line}}. Recalling the interpretation of the projective space $\bP^3$ as $\bP(J_f^2)$, where $J_f=( \partial_{x_0}f,\dots,\partial_{x_3}f)$ is the Jacobian ideal of $f$ (see Subsection \ref{SUBS:jacobianandapolar} and in particular Lemma \ref{LEM:apolarfacts}) (e), we have that $L$ represents a linear system of quadrics generically singular in a point (the vertex corresponding to $[v(t)]$). By Bertini's Theorem, the quadric defined by the general point of $L$ is smooth outside the base locus $\cB=\mathrm{Bs}(|L|)$. In other words, the curve $\cV$ consisting of the vertices of these quadrics is contained in the base locus $\cB$.\\
    Since $f$ is smooth, its Jacobian is generated by a regular sequence of quadrics: two distinct point of $L$ define two quadrics intersecting in a curve: $\cV$ is an irreducible component of such intersection. One can then easily observe that the line joining two general points $v_1$ and $v_2$ of $\cV$ is in the intersection of the two quadrics $Q_1$ and $Q_2$ having those points as vertices, i.e. $\bP(\left\langle v_1,v_2\right\rangle)\subset\cB$. We would have that the union of the secant lines of $\cV$ would be contained in $\cB$: the only case where the assumption $\dim(\cB)=1$ is satisfied is the one where $\cV$ is a line, as claimed.\\
    Let us now study the mutual position of the two lines $\cV$ and $L$.
    \smallskip
    \ \\
    {\underline{Claim: The intersection between $L$ and $\cV$ consists of a single point}}. First of all, let us consider the case where $L$ and $\cV$ coincide. We can construct the subvariety $\Gamma_{f_{|L}}$ of $\Gamma_f$ (recall it from \eqref{EQ:Gamma}), defined as the closure of $\pi_1^{-1}(L\setminus\cD_2(f))$, i.e.
    $$\Gamma_{f_{|L}}=\overline{\{([x],[y]) \ | \ [x]\in L\setminus\cD_2(f), \ H_f(x)\cdot y=0\}}.$$
    Since the kernel of the general point of $L$ lives in $\cV$, and so in $L$, we have that $\Gamma_{f_{|L}}$ can be seen as a subvariety of $L\times L\simeq\bP^1\times\bP^1$. Moreover, since by construction we have that $\dim(\Gamma_{f|L})=1$, the intersection $\Gamma_{f_|L}\cap\Delta_{\bP^1}$ is then not trivial, contradicting the smoothness of $f$ (by property \eqref{EQ:notdiagonal}).\\
    Let us now consider the case where $L\cap\cV=\emptyset$ and fix a system of coordinates of $\bP^3$. Let us set $A=P_2=[0:0:1:0]$, $B=P_3=[0:0:0:1]$ (i.e. we have $L=\{x_0=x_1=0\}$) and also $\bP(\ker(H_f(A)))=P_0=[1:0:0:0]$ and $\bP(\ker(H_f(B)))=P_1=[0:1:0:0]$ (i.e. $\cV=\{x_2=x_3=0\}$). We can now write the Hessian matrix of $f$ evaluated at the points $P_2$ and $P_3$:
    $$H_f(P_2)=\begin{pmatrix}        0&0&0&0\\0&a_2&b_2&c_2\\0&b_2&d_2&e_2\\0&c_2&e_2&f_2
    \end{pmatrix}
    \qquad H_f(P_3)=\begin{pmatrix}        a_3&0&b_3&c_3\\0&0&0&0\\b_3&0&d_3&e_3\\c_3&0&e_3&f_3
    \end{pmatrix}$$
    Since $\cV$ is contained in both the quadrics defined by $H_f(P_2)$ and $H_f(P_3)$ we have for example that 
    $$0=P_1^tH_f(P_2)P_1=a_2, \qquad 0=P_0^tH_f(P_3)P_0=a_3.$$ 
    Moreover, denoting by $A^i$ the $i$-th column of a matrix $A$, with numeration from $0$ to $3$, by Schwartz, we have that
    $$H_f(P_2)\cdot P_3=H_f(P_2)^3=\left(\frac{\partial^3f}{\partial x_k\partial x_2\partial x_3}\right)_{k=0,\dots,3}=\left(\frac{\partial^3f}{\partial x_k\partial x_3\partial x_2}\right)_{k=0,\dots,3}=H_f(P_3)^2=H_f(P_3)\cdot P_2,$$
    i.e. we get
    $$b_3=0 \qquad c_2=0 \qquad e_2=d_3 \qquad f_2=e_3.$$
    To conclude, let us then write the Hessian matrix of $f$ evaluated at the general point of $L$, namely in $P_2+tP_3$:
    $$H_f(P_2+tP_3)=\begin{pmatrix}
        0&0&0&tc_3\\0&0&b_2&0\\0&b_2&d_2+te_2&e_2+tf_2\\tc_3&0&e_2+tf_2&f_2+tf_3
    \end{pmatrix}$$
    whose determinant is equal to $t^2b_2^2c_3^2$. Since we are assuming that $L$ is contained in $\cH_f$, such determinant has to be zero for every $t\in\bK$: then either $b_2=0$ or $c_3=0$. But this would imply that either $P_2$ or $P_3$, respectively, is a point in $\cD_2(f)$, against our assumptions. As claimed, the only possibility is that the intersection $L\cap\cV$ consists of a single point.
    \smallskip
    \ \\
    {\underline{Claim: If we assume that $\varphi$ is not constant, then $f$ can be written as $\tilde{f}$}}. Assuming that the map $\varphi$ is not constant, the only possibility is that the two lines $L$ and $\cV$ have a common point, namely $P$. Let us observe that since $P\in L\cap \cV$, we have by symmetry (and by the dominance of $\varphi$) that $\iota_f(P)\cap L\neq\emptyset$ and $\iota_f(P)\cap\cV\neq\emptyset$. From property \eqref{EQ:notdiagonal}, we have that $P\not\in\iota_f(P)$: from this we then get that $\iota_f(P)$ is a line (and so $P\in\cD_2(f)$). Let us fix also in this case a suitable system of coordinates as follows: 
    $$\{P_0\}=L\cap\iota_f(P), \quad \{P_1\}=\cV\cap\iota_f(P), \quad P=P_2.$$

    \begin{small}
  \begin{center}
\begin{tikzpicture}[scale=0.9]

\path[name path=L] (-2,0) -- (4,0);
\path[name path=V] (-2,-1) -- (2,3);
\path[name path=iota] (-1,4) -- (4,-1);

\path[name intersections={of=L and V, by=P=}];
\path[name intersections={of=L and iota, by=P0}];
\path[name intersections={of=V and iota, by=P1}];

\draw[thick] (-2.5,0) -- (4,0) node[right] {$L$};
\draw[thick] (-2,-1) -- (2,3) node[above left] {$\mathcal V$};
\draw[thick] (-1,4) -- (4,-1) node[right] {$\iota_f(P)$};

\fill (P) circle (2.5pt)
      node[above=6pt, inner sep=-7.5pt] {$P=P_2$};

\fill (P0) circle (2.5pt)
      node[above=4pt, inner sep=1pt] {$P_0$};

\fill (P1) circle (2.5pt)
      node[above=6pt, inner sep=1pt] {$P_1$};
\end{tikzpicture}
\end{center}
\end{small}

    Similarly, as before, we get
    $$H_f(P_0)=\begin{pmatrix}
        a_0&b_0&0&c_0\\b_0&d_0&0&e_0\\0&0&0&0\\c_0&e_0&0&f_0
    \end{pmatrix} \qquad H_f(P_2)=\begin{pmatrix}
        0&0&0&0\\0&0&0&0\\0&0&a_2&b_2\\0&0&b_2&c_2
    \end{pmatrix}. $$
    As before, since $\cV$ is contained in all the quadrics defined by the points of $L$, we get
    $$P_1^tH_f(P_0)P_1=0 \ \Longrightarrow \ d_0=0\qquad \mbox{and} \qquad P_2^tH_f(P_2)P_2=0 \ \Longrightarrow \ a_2=0.$$
    If we then, again, consider the matrix
    $$H_f(P_2+tP_0)=\begin{pmatrix}
        ta_0&tb_0&0&tc_0\\tb_0&0&0&te_0\\0&0&0&b_2\\tc_0&te_0&b_2&c_2+tf_0
    \end{pmatrix},$$ 
    we have that its determinant $t^2b_0^2b_2^2$ has to be identically zero (since $L\subset\cH_f$), i.e. either $b_0=0$ or $b_2=0$. Observe now that $b_2\neq0$, otherwise we would have that the point $P_2$ belongs to $\cD_1(f)$, and so $f$ would be of TS type, against our assumptions. Then, we must have $b_0=0$.\\
    Let us now translate the conditions obtained above in terms of the apolar ring $\cA_f$ associated of $f$. Recalling the identification $\bP^3\simeq\bP(A^1_f)$ and denoting by $\{y_0,\dots,y_3\}$ a basis of $(\cL_3)^1$ or, equivalently, of $A_f^1$, since $V(f)$ is not a cone (see Subsection \ref{SUBS:jacobianandapolar}), we have that:
    \begin{itemize}
        \item Having that $P_0, P_1\in\iota_f(P_2)$ means that $y_0y_2=0$ and $y_1y_2=0$ in $\cA_f$, i.e. in the expression of $f$ monomials of the form $x_0x_2\cdot\ell$ or $x_1x_2\cdot\ell$, where $\ell$ is any linear form, do not appear.
        \item $a_2=0$, $d_0=0$, and $b_0=0$ imply respectively that, in $\cA_f$, $y_2^3=0$, $y_1^2y_0=0$, and $y_0^2y_1=0$ i.e. in the expression of $f$ the monomials $x_2^3,x_1^2x_0,$ and $x_0^2x_1$ do not appear.        
    \end{itemize}
    Finally, since the fourth point $P_3$ is not subject to any condition, we can choose it in such way that it belongs to the cubic $V(f)$, i.e. in such a way that $y_3^3=0$ and so the term $x_3^3$ doesn't appear in the expression of $f$. To conclude, one can observe that with these constraints the form $f$ can be written in the same way as $\tilde{f}$ as in \eqref{EQ:specialsurface}, as claimed.\\
    Finally, it is easy to check that if $\tilde{f}$ is general in the linear system $(\star)$, then its hessian $\cH_{\tilde{f}}\subseteq\bP^3$ contains the line $L:\{x_3=x_1=0\}$ and there is not a point $Q$ such that $\iota_{\tilde{f}}(Q)=L$. Indeed, if so, this point $Q$ would be such that, by symmetry, $Q\in\iota_{\tilde{f}}(R)$ for any $R\in L$. But, by construction, $P_0,P_2\in L$ and $\iota_{\tilde{f}}(P_2)$ is a line not passing through $P_2$ (since $V(\tilde{f})$ is smooth), while $\iota_{\tilde{f}}(P_0)=P_2$.
\end{proof}

\medskip

Before moving to the higher dimension case, let us say that this kind of results seems to allow specific studies on the Hessian variety associated with a smooth cubic hypersurfaces. For example, as a consequence of the above Proposition \ref{PROP:surfacecase}, one can prove that the Hessian surface associated with a smooth cubic surface is never a cone. However, we do not move in this direction, since in Section \ref{SEC:hessiancones}, we will prove this result in a more general setting.\\
\subsection{Higher dimensions}
\label{SUBS:higher}
In this subsection, we will conclude the proof of Theorem \ref{THM:linearspaces}, by analyzing the case of smooth cubic hypersurfaces in $\bP^n$, for $n\geq4$. The proof, which is in some sense similar to that given for cubic surfaces in the previous subsection, will be divided into different steps.\\

Let then $f$ be a cubic form in $n+1$ variables not of TS type and defining a smooth hypersurface $X=V(f)$ in $\bP^n$, with $n\geq4$. Let us assume that there exists an $(n-2)$-plane $\Pi\simeq\bP^{n-2}$ contained in $\cH_f$. \\
Observe that the plane $\Pi$ is not contained in $\Sing(\cH_f)=\cD_{n-1}(f)$, otherwise $f$ would be of TS type by Theorem \ref{THM:charactTS}, against our assumptions. In other words, we have that the general point $P$ of $\Pi$ is smooth for $\cH_f$, i.e. not belonging $\cD_{n-1}(f)$ and so $\iota_f(P)$ consists of a single point. Let now $P_0,\dots,P_{n-2}$ be general $n-1$ points in general position in $\Pi$: let us define 
\begin{itemize}
    \item $T_i:=\iota_f(P_i)$;
    \item $\cQ_{P_i}$ the element in $J_f$ associated with $P_i$: it is the quadric defined by the symmetric matrix $H_f(P_i)$ with the point $T_i$ as vertex.
\end{itemize}
Let us now observe that, since $V(f)\subseteq\bP^n$ is smooth, the Jacobian $J_f$ is generated by a regular sequence of quadrics. Since the quadrics $\cQ_{P_i}$ are independent elements in $J_f$, we have that 
\begin{equation}
\label{EQ:baselocus}
\dim\left(\bigcap_{i=0}^{n-2}\cQ_{P_i}\right)=1.
\end{equation}
Let us now consider a rational map $\varphi:\Pi\dashrightarrow\cH_f$ sending a point $Q\in\Pi\setminus\cD_{n-1}(f)$ to its kernel $\iota_f(Q)$ and let us define the irreducible subvariety
$$\cV:=\overline{\mbox{Im}(\varphi)}.$$
Observe then that, by Bertini's Theorem, we have that $\cV\subseteq\cB:=\Bs(\Pi)$, the base locus of the linear system $\Pi$, i.e., in particular, $\dim(\cV)\leq1$ from Equation \eqref{EQ:baselocus}.
We then have that 
\begin{center}
    either $\cV$ is a point, or it is a curve.
\end{center}

In the first case, Theorem \ref{THM:linearspaces} is proved: indeed, we would have, by symmetry, that $\Pi\subseteq\iota_f(\cV)$, and clearly this is actually an equality, otherwise there would be a $\bP^{n-1}(\simeq\iota_f(\cV))$ as component of $\cH_f$, and so $f$ would be of TS type, against the assumptions.\\
The only remaining case is the one where $\cV$ is a curve: assuming this, let us prove that $f$ belongs to the linear system $(\star)$ (see \eqref{EQ:linearsystem}).

\noindent
Let $A\in\cV$ be a general point: there exists a subvariety $Y_A$ of $\Pi$ of dimension $n-3$, such that for $y\in Y_A$ general we have that $\varphi(y)=A$. In particular, we have that, by symmetry, $Y_A=\iota_f(A)\cap\Pi$, i.e. $Y_A\simeq\bP^{n-3}$.\\
Let us now consider another general point $A_t\in\cV$: in $\Pi$ we have then that $Y_{A}\cap Y_{A_t}=:\Lambda_t\simeq\bP^{n-4}$. Observe now that for any $z\in\Lambda_t$, we would have $\iota_f(z)\supset\bP(\left\langle A,A_t\right\rangle)$, i.e. $\Lambda_t\subset\cD_{n-1}(f)$. Hence, if such an intersection $\Lambda_t$ varies with the point $A_t$ we would have, since $A$ is general, that the entire $\Pi$ is contained in the singular locus $\cD_{n-1}(f)$ of $\cH_f$ against our assumptions. Then there exists an $(n-4)$-plane $\Lambda$ which is contained in $Y_A$ for every $A\in\cV$. Let us now prove the following:

\begin{lemma}
\label{LEM:Vline}
    $\cV$ is a projective line.
\end{lemma}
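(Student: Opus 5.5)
We argue exactly as in the surface case (Claim ``$\cV$ is a projective line'' in the proof of Proposition \ref{PROP:surfacecase}), using the identification $\bP^n\simeq\bP(J^2_f)$ of Lemma \ref{LEM:apolarfacts} (e). Under this identification every point $Q\in\Pi$ gives a quadric $\cQ_Q\in J^2_f$. For $Q\in\Pi\setminus\cD_{n-1}(f)$ this quadric is a cone of rank $n$ with vertex the single point $\iota_f(Q)=\varphi(Q)$. We already know that $\cV\subseteq\cB=\Bs(\Pi)$ and that $\dim\cB=1$ by \eqref{EQ:baselocus}. Since we are in the case where $\cV$ is a curve, $\cV$ is an irreducible component of the curve $\cB$.

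The key step is to show that for two general points $v_1,v_2\in\cV$ the whole line $\bP(\langle v_1,v_2\rangle)$ lies in $\cB$. Fix $Q\in\Pi$ general. We know $v_1,v_2\in\cQ_Q$, and we must show that the line joining them lies on $\cQ_Q$. By polarity this amounts to $v_1^tH_f(Q)v_2=0$. Choose $Q_1,Q_2\in\Pi$ with $\varphi(Q_i)=v_i$, i.e.\ $H_f(Q_i)v_i=0$; equivalently, by symmetry, $Q_i\in\iota_f(v_i)$, so $Q_i\in Y_{v_i}$. In apolar terms (Lemma \ref{LEM:apolarfacts} (a),(e)) we need $Q\,v_1v_2=0$ in $A^3_f$ for every $Q\in\Pi$, that is, $v_1v_2$ annihilates $\Pi$ under the pairing $A^1_f\times A^2_f\to A^3_f$. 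For $Q\in Y_{v_1}$ this holds because $Qv_1=0$ already, and similarly for $Q\in Y_{v_2}$. Hence the linear form $Q\mapsto Q v_1v_2$ on $\langle\Pi\rangle$ vanishes on the two hyperplanes $Y_{v_1},Y_{v_2}\simeq\bP^{n-3}$ of $\Pi$.

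These two hyperplanes are distinct. If they coincided, then for $v_1\ne v_2$ every point of $Y_{v_1}$ would have kernel containing the line $\langle v_1,v_2\rangle$. Letting $v_2$ vary then gives a family contradicting the assumption that $\Pi\not\subset\cD_{n-1}(f)$; this is the same argument already used for $\Lambda_t$, since the general fibre of $\varphi$ is exactly $Y_A$. A linear form on $\langle\Pi\rangle$ vanishing on two distinct hyperplanes vanishes identically. Therefore $\bP(\langle v_1,v_2\rangle)\subset\cQ_Q$ for all $Q\in\Pi$, and so the line lies in $\cB$.

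Consequently the secant variety of $\cV$ is contained in $\cB$. Since $\dim\cB=1$, the secant variety of the irreducible curve $\cV$ has dimension at most $1$. This forces $\cV$ to be a line: if $\cV$ were not a line, its secant variety would have dimension at least $2$. The main obstacle is to check carefully that $Y_{v_1}\neq Y_{v_2}$ for general $v_1,v_2$, i.e.\ that the fibres of $\varphi$ genuinely vary. I would argue this from the non-constancy of $\varphi$ together with $Y_A=\iota_f(A)\cap\Pi$: equal fibres would mean equal preimages, contradicting the fact that $\varphi$ is defined on a dense open set where each point has a unique image.
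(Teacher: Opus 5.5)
Your proof is correct, but for $n\geq4$ it follows a different route from the paper's.

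The paper works with the fixed $(n-4)$-plane $\Lambda$. Since $\iota_f(z)\supseteq\cV$ for every $z\in\Lambda$, it does a case analysis on the rank of $H_f$ along $\Lambda$:
\begin{itemize}
\item rank $n-1$ gives $\cV\subseteq\iota_f(z)\simeq\bP^1$ at once;
\item rank $\leq n-3$ is excluded by Theorem \ref{THM:charactTS};
\item in rank $n-2$ the curve $\cV$ lies in a plane $\Sigma$. If $\cV$ were not a line, the join of the vertex $T_i$ of $\cQ_{P_i}$ with $\cV$ would be all of $\Sigma$ and lie on each $\cQ_{P_i}$. This gives $\Sigma\subseteq\cB$, contradicting $\dim\cB=1$.
\end{itemize}

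You instead extend the secant-line argument of the surface case to every $n$. Your one new ingredient is that $Q\mapsto v_1^tH_f(Q)v_2$ is linear on $\langle\Pi\rangle$ and vanishes on the two distinct hyperplanes $Y_{v_1},Y_{v_2}$ of $\Pi$, hence on all of $\Pi$. This is exactly what upgrades ``the secant line lies on the two quadrics with vertices $v_1,v_2$'' (enough for a pencil) to ``it lies on every quadric of the system $\Pi$''.

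What each approach buys:
\begin{itemize}
\item Yours is uniform in $n$ (for $n=3$ it reduces to the surface proof). It does not need $\Lambda$ and makes no further appeal to Theorem \ref{THM:charactTS}.
\item The paper's only ever uses the vertex property of one cone at a time. The price is that it must first force $\cV$ to be planar through the rank analysis on $\Lambda$.
\end{itemize}

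One small tidy-up concerns $Y_{v_1}\neq Y_{v_2}$. The cleanest reason is not a family argument ``letting $v_2$ vary''. If $Y_{v_1}=Y_{v_2}$, every $Q\in Y_{v_1}$ would satisfy $v_1,v_2\in\iota_f(Q)$, so $Y_{v_1}\subseteq\cD_{n-1}(f)$. But the general point of $Y_{v_1}$ lies in the fibre $\varphi^{-1}(v_1)\subseteq\Pi\setminus\cD_{n-1}(f)$. Your closing sentence essentially makes this point.
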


\begin{proof}
    Let us first observe that by construction for any $z\in\Lambda$, we have $\iota_f(z)\supseteq\cV$. This means that $\Lambda\subseteq\cD_{n-1}(f)$. \\
    Clearly, if the general point $z\in\Lambda$ lives in $\cD_{n-1}(f)\setminus\cD_{n-2}(f)$, then $\bP^1\simeq\iota_f(z)\supset\cV$: we get the claim. On the other hand, it is not possible that the $\Lambda$ is contained in $\cD_{n-3}(f)$: since $\Lambda\simeq\bP^{n-4}$ we would have, against our hypotheses, that $f$ is of TS type by Theorem \ref{THM:charactTS}.\\
    Hence, the only remaining case to be analyzed is that where the general point $z$ of $\Lambda$ lives in $\cD_{n-2}(f)\setminus\cD_{n-3}(f)$, i.e. we have that for $[z]\in\Lambda$ general $\iota_f([z])\simeq\bP^2\supseteq\cV$. Observe that if $\Lambda$ has positive dimension, we can assume that $\iota_f([z])$ is constant with $[z]$ varying in $\Lambda$, otherwise we would have the thesis, since $\cV$ is contained in distinct $2$-planes. However, we have that $\cV$ is a plane curve and let us denote by $\Sigma$ a $2$-plane containing it. Recall now that $\cV$ consists of the vertices of the quadrics parametrized by the linear system $\Pi$ and that, consequently, it is an irreducible component of $\cB=\Bs(\Pi)$, which has dimension $1$, as shown above. Recall also that we define $P_0,\dots,P_{n-2}$ to be $n-1$ points in $\Pi$ in general position, and we set $T_i=\iota_f(P_i)$. These points $T_i$ are then in $\cV$: for any $i=0,\dots,n-2$. Let us consider the joint variety $J_i=J(T_i,\cV)$: if we assume by contradiction that $\cV\not\simeq\bP^1$, we can easily see that $J_i\simeq \Sigma$ is contained in $\cQ_{P_i}$ (the quadric associated with the point $P_i$). But the same holds for any $i=0,\dots,n-2$: the $2$-plane $\Sigma$ is then contained in all the quadrics $\cQ_{P_i}$ and so in the base locus $\cB$. Since it has dimension $1$, we get then a contradiction: $\cV\simeq\bP^1$, as claimed.
\end{proof}

We will now study the mutual position between the line $\cV$ and the $(n-2)$-plane $\Pi$. In particular, we prove the following:
\begin{lemma}
    The line $\cV$ and the $(n-2)$-plane $\Pi$ have a single common point, i.e.
    $$\cV\cap\Pi=\{R\}.$$
\end{lemma}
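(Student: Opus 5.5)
We follow the surface case (Proposition \ref{PROP:surfacecase}) and rule out the two alternatives $\cV\subseteq\Pi$ and $\cV\cap\Pi=\emptyset$. Since $\cV\simeq\bP^1$ by Lemma \ref{LEM:Vline} and $\Pi\simeq\bP^{n-2}$, the intersection $\cV\cap\Pi$ is either empty, a single point, or all of $\cV$. So it suffices to exclude the first and the last cases.

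\emph{Case $\cV\subseteq\Pi$.} Restrict the incidence correspondence to
$$\Gamma_{f|\Pi}:=\overline{\pi_1^{-1}(\Pi\setminus\cD_{n-1}(f))}.$$
For a general $Q\in\Pi$ the kernel $\iota_f(Q)$ is a single point of $\cV$. By symmetry, for a general $A\in\cV$ the fibre is $Y_A=\iota_f(A)\cap\Pi\simeq\bP^{n-3}$, which is a hyperplane of $\Pi$. If $\cV\subseteq\Pi$, then $\cV$ meets each hyperplane $Y_A$ of $\Pi$. I would use this to produce a point $x$ with $x\in\iota_f(x)$.

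Consider the map $\cV\to\cV$ obtained by sending $A$ to $Y_A\cap\cV$. If $\cV\not\subseteq Y_A$, this intersection is a point, and the map is a correspondence on $\bP^1$ of type $(1,\cdot)$. If $\cV\subseteq Y_A$, then $A\in Y_A\subseteq\iota_f(A)$ directly. Otherwise I would consider the curve $\{(A,B)\in\cV\times\cV \mid B\in Y_A\}\subseteq\bP^1\times\bP^1$. It has positive bidegree, so it must meet the diagonal, giving a point $x$ with $x\in\iota_f(x)$. This contradicts property \eqref{EQ:notdiagonal}, exactly as in the surface case. One must check that the intersection point arises from a genuine element of $\Gamma_f$, not only from points of the closure; since $\Gamma_f$ is closed, taking closures suffices.

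\emph{Case $\cV\cap\Pi=\emptyset$.} I would pick coordinates adapted to the configuration. Take $A=\iota(P_a)$ and $B=\iota(P_b)$ to be two points of $\cV$, fix $\Lambda\simeq\bP^{n-4}\subseteq Y_A\cap Y_B$, and choose $P_a\in Y_B\setminus\Lambda$ and $P_b\in Y_A\setminus\Lambda$. Then $\Pi=\langle\Lambda,P_a,P_b\rangle$. Since $\cV\cap\Pi=\emptyset$, the points $A,B$ together with these points of $\Pi$ give $n+1$ independent points, so they form a coordinate frame.

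Then write $H_f$ at $P_a$ and $P_b$ and on $\Lambda$, using three sets of relations:
\begin{itemize}
\item the kernel relations $y_Ay_{P_a}=0$ and $y_By_{P_b}=0$;
\item the relations from $\Lambda\subseteq\iota(A)\cap\iota(B)$;
\item the vanishing $y_A^2y_B=y_B^2y_A=0$, which follows because $\cV$ lies in the base locus $\cB$, so the quadrics $\cQ_{P_i}$ vanish on $\cV$.
\end{itemize}
I would then compute $\det H_f$ on the pencil $P_a+tP_b$, or more generally on $\Pi$. As in the surface computation ($\det=t^2b_2^2c_3^2$), I expect the determinant to factor as a product of entries whose vanishing forces $P_a$ or $P_b$ into $\cD_{n-1}(f)$. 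That would put $\Pi$ generically in $\Sing(\cH_f)$, or force a $\bP^{n-1}$ component and hence TS type by Theorem \ref{THM:charactTS}, against the assumptions.

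\textbf{Main obstacle.} The second case is the hardest. The block involving $\Lambda$ is $(n-3)$-dimensional, and the determinant must still be shown to reduce to the $2\times 2$-type computation of the surface case. I would first try to show, by symmetry and the fact that $\Lambda\subseteq\cD_{n-1}(f)$ with kernel containing $\cV$, that the Hessian restricted to the pencil is block triangular with respect to the splitting $\langle A,B,P_a,P_b\rangle\oplus\Lambda$. That would let the determinant factor as the surface-type factor times a minor that does not vanish identically.
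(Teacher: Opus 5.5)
Your first case ($\cV\subseteq\Pi$) is correct and is exactly the paper's argument: $\cV\not\subseteq Y_A$ by \eqref{EQ:notdiagonal}, and the closed correspondence $\{(A,A')\in\cV\times\cV : A'\in\iota_f(A)\}$ is a curve in $\bP^1\times\bP^1$, so it meets the diagonal. The second case, however, is only a plan, and the determinant route you propose cannot finish it.

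First, two slips. With $A=\iota_f(P_a)$ you need $P_a\in Y_A\setminus\Lambda$, not $Y_B$; otherwise $P_a\in Y_A\cap Y_B=\Lambda$. Also, $\cV\subseteq\cB$ gives $y_Qy_A^2=y_Qy_B^2=0$ for $Q\in\Pi$. It does not give $y_A^2y_B=y_Ay_B^2=0$, which concern the quadric of $B\notin\Pi$.

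With the correct relations, take $Q=\sum_i\lambda_iP_i+sP_a+tP_b$, where the $P_i$ span $\Lambda$. The rows of $H_f(Q)$ indexed by $A$ and $B$ each have a single nonzero entry: $t\,y_Ay_b^2$ in column $P_b$ and $s\,y_By_a^2$ in column $P_a$. So your predicted factorization does hold:
$\det H_f(Q)=\pm s^2t^2(y_Ay_b^2)^2(y_By_a^2)^2\det M_\Lambda(Q)$, with $M_\Lambda(Q)=(y_iy_jy_Q)_{i,j}$.

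The problem is the claim you need, that $\det M_\Lambda\not\equiv 0$. It is not available. If $y_Ay_b^2\neq0\neq y_By_a^2$, the hypothesis $\Pi\subseteq\cH_f$ itself forces $\det M_\Lambda\equiv0$. So the vanishing of $\det H_f$ on $\Pi$ gives nothing beyond the hypothesis. The surface computation only worked because for $n=3$ the $\Lambda$-block is empty.

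The missing ingredient is the position of the kernel: $\iota_f(Q)\in\cV$ for general $Q\in\Pi$, which you never use. Suppose $y_Ay_b^2$, $y_By_a^2$, $s$ and $t$ are all nonzero. Then any kernel vector of $H_f(Q)$ has zero $P_a$- and $P_b$-components. Its $\Lambda$-component $w$ satisfies $M_\Lambda(Q)w=0$, and $w=0$ forces the whole vector to vanish. Hence $\iota_f(Q)\not\in\cV$, a contradiction.

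So one of the two coefficients vanishes, say $y_Ay_b^2=0$. Then $y_Ay_by_k=0$ for every $k$, using also $y_Ay_i=y_Ay_a=y_by_B=0$ and $y_by_A^2=0$. By Gorenstein duality $y_Ay_b=0$, so $\Pi\subseteq\iota_f(A)$. This contradicts the non-constancy of $\varphi$ (equivalently, $\cV\subseteq\cD_2(f)$, hence TS type by Theorem \ref{THM:charactTS}).

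The paper reaches the same conclusion more directly. Along the line $\langle P_{n-3},P_{n-2}\rangle\subset\Pi$, the kernel condition reads $(y_{n-3}+ty_{n-2})(y_{n-1}+\mu(t)y_n)=0$. Multiplying by each $y_i$ gives $y_{n-2}y_{n-1}y_i=0$ for all $i$, hence $y_{n-2}y_{n-1}=0$ and $P_{n-1}\in\cD_2(f)$.
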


\begin{proof}
    We have clearly three possibility: either the line $\cV$ is contained in $\Pi$, or they are disjoint, or they meet in a point. To prove the Lemma, let us rule out the first two cases. \\
    Let us start by assuming that $\cV\subset\Pi$. With the notation introduced above, one can observe that $\cV$ intersects $Y_A$ in a single point: indeed, if we assume that $\cV$ is contained in $Y_A$ for some $A\in\cV$, we would have by construction that $A\in\iota_f(A)$, i.e. $A$ would be a singular point for $V(f)$, again our hypothesis of smoothness (see property \eqref{EQ:notdiagonal}). This means that for any $A\in\cV$, there exists a point $A'\in\cV\cap Y_A$ such that $A'\in\iota_f(A)$. We can then consider the incidence variety $$\Gamma_{f_{|\cV\times\cV}}=\{(A,A')\in\cV\times\cV \ | \ A'\in\iota_f(\cV)\}\subset\cV\times\cV\simeq\bP^1\times\bP^1.$$
    This incidence correspondence has dimension $1$ by construction and so the intersection with $\Delta_{\bP^1}$ is not trivial, giving a singular point for $V(f)$: again, a contradiction.\\
    Let us now assume that $\cV$ and $\Pi$ are disjoint and as we did in the case of surfaces in the previous Subsection, let us now fix a suitable system of coordinates.

    \begin{center}
    \begin{tikzpicture}[scale=1.2]

    \filldraw[fill=blue!10, draw=black]
    (-1,-1) -- (3,-1) -- (4,1) -- (0,1) -- cycle;
    \node at (3.8,1.2) {$\Pi$};

    \filldraw[fill=red!25, draw=black, opacity=0.8]
    (1.2,-0.8) -- (2,-0.3) -- (3,0.6) -- (2.4,0.9) -- cycle;
    \node at (3.2,0.5) {$A_{P_n}$};

    \filldraw[fill=yellow!25, draw=black, opacity=0.8]
    (1.2,-0.8) -- (0.7,-0.1) -- (1.6,0.8) -- (2.4,0.9) -- cycle;
    \node at (0.7,0.5) {$A_{P_{n-1}}$};

    \draw[very thick] (1.2,-0.8) -- (2.4,0.9);
    \node[right] at (1.5,0.5) {$\Lambda$};
    
    \draw[thick] (5,-1) -- (6.5,1);
    \node at (6.7,1.1) {$\mathcal V$};

    \fill (5.5,-0.3) circle (2pt);
    \node[right] at (5.5,-0.3) {$P_{n-1}$};

    \fill (6.1,0.5) circle (2pt);
    \node[right] at (6.1,0.5) {$P_n$};

    \end{tikzpicture}
    \end{center}

    As in the picture, let us take the coordinate points $P_n$ and $P_{n-1}$ as two general distinct points on $\cV$. Moreover, let us consider $P_0,\dots,P_{n-4}$ general points in general position in $\Lambda$; and finally $P_{n-3}\in A_{P_{n-1}}$ and $P_{n-2}\in A_{P_n}$.
    From this configuration, we then can write the first vanishing relations in the apolar ring $\cA_f$:
    \begin{equation}
        \label{EQ:vanishing1}
        y_n\cdot y_i=0 \quad \forall i\in\{0,\dots,n-2\}\setminus\{n-3\} \qquad \mbox{and} \qquad y_{n-1}\cdot y_i=0 \quad \forall i\in\{0,\dots,n-3\}.
    \end{equation}
    Observe now that for any $t\in\bK$, there exists $\mu(t)$ (with $\mu(0)=0$, but clearly $\mu(t)\not\equiv0$), such that
    $$(y_{n-3}+ty_{n-2})\cdot(y_{n-1}+\mu(t)y_n)=0, \quad \mbox{i.e.} \quad \mu(t)y_{n-3}y_n+t y_{n-2}y_{n-1}=0.$$
    Multiplying this by $y_{n-3}$, $y_{n-2}$, $y_{n-1}$, and $y_n$, we get respectively
    \begin{equation}
        \label{EQ:vanishing2}
        y_{n-3}^2y_n=0, \qquad y_{n-2}^2y_{n-1}=0, \qquad y_{n-2}y_{n-1}^2=0, \qquad y_{n-3}y_n^2=0.
    \end{equation}
    Let us now focus on the product $y_{n-2}y_{n-1}\in A_f^2$: by conditions \eqref{EQ:vanishing1} and \eqref{EQ:vanishing2} we have that 
    $$y_{n-2}y_{n-1}y_i=0 \quad \forall i=0,\dots,n.$$
    Since the apolar ring $\cA_f$ is a Gorenstein algebra, this means that $y_{n-2}y_{n-1}=0$, and in the same way one gets $y_ny_{n-3}=0$. In other words, we have
    $$\iota_f(P_{n-1})\simeq\bP^{n-2} \quad \mbox{and} \quad \iota_f(P_n)\simeq\bP^{n-2} \qquad \mbox{i.e.} \quad P_{n-1},P_n\in\cD_2(f)$$

Since they have been chosen generically, we have that $\cV\subseteq\cD_2(f)$, but since $\cV\simeq\bP^1$, we would have that $f$ is of TS type by Theorem \ref{THM:charactTS}, against our assumptions. The only possibility is that $\cV$ and $\Pi$ meet each other in a single point, as claimed.
\end{proof}

Exploiting now the arising configuration, as done in the case of surfaces in the previous subsection, one can explicitly describe the expression of the associated cubic form $f$.\\
Let us do it in detail in the case where $n=4$, i.e. for cubic threefolds:

\begin{configuration}
\label{CONF:3-folds}
    As proved above, we necessarily have that the line $\cV$ and the $2$-plane $\Pi$ intersect each other in a point, denoted by $R$. \\
    Let us translate in this case the configuration described above: here $\Lambda$ is just a point denoted by $P$ and the $2$-plane $\Pi$ is covered by the linear subvarieties $Y_A$, with $A\in\cV$, which are lines passing through $P$ and such that for any point $x\in Y_A$ we have $\iota_f(x)\ni A$ (with $A=\iota_f(x)$, for $x$ general in $Y_A$). Let us observe that clearly $R\neq P$, otherwise it would be a singular point for $V(f)$. By the dominance of the map $\varphi$ we have that there exists a line passing through $P$ and contained in $\Pi\cap\iota_f(R)$; moreover, there exists a point $T\in\cV$ such that the line $\bP(\langle P,R\rangle)\subseteq\iota_f(T)$. By symmetry, this means that $R\in\cD_2(f)$ and that $\iota_f(R)\simeq\bP^2$ intersecting $\Pi$ along a line and cutting $\cV$ in the point $T$. \\
    Let us now fix a system of coordinates in the following way:
    $$P=P_0=[1:0:0:0:0], \quad P_1=[0:1:0:0:0]\in\iota_f(R)\cap\Pi \mbox{ general},$$
    $$T=P_2=[0:0:1:0:0], \quad R=P_3=[0:0:0:1:0].$$

    \begin{center}
    
    \begin{tikzpicture}[scale=1.2]

    \filldraw[fill=blue!10, draw=black] 
    (-3,-1) -- (3,-1) -- (4,1) -- (-2,1) -- cycle;
    \node at (-2.5,1.2) {$\Pi$};

    \fill (0,0) circle (2pt);
    \node[above] at (0,0) {$P=P_0$};

    \draw[thick] (-2.5,-0.3) -- (2.0,0.25);

    \fill (1.0,0.15) circle (2pt);
    \node[above] at (1.0,0.15) {$P_1$};

    \draw[very thick] (3.2,2) -- (2.8,-2);
    \node[right] at (3.2,2) {$\mathcal V$};

    \fill (3.1,0.8) circle (2pt);
    \node[right] at (3.2,0.8) {$R=P_3$};

    \fill (2.8,-1.8) circle (2pt);
    \node[right] at (2.8,-1.8) {$T=P_2$};

    \filldraw[fill=red!20, draw=black, opacity=0.8]
    (-2.5,-0.3) -- (2.0,0.25) -- (2.8,-1.8) -- cycle;
    \node at (1.7,-0.7) {$\iota_f(R)$};

    \end{tikzpicture}
    \end{center}

    As in the case of surfaces, we can then interpret such a construction via the following vanishing conditions in the apolar ring $\cA_f$:
    \begin{equation}
    \label{EQ:firstconditionthreefolds}
    y_0y_2=0, \quad y_0y_3=0, \quad y_1y_3=0, \quad y_2y_3=0.
    \end{equation}

    Furthermore, as above, we have that for every $t$, there exists $\mu(t)$ (with $\mu(0)=0$, but clearly $\mu(t)\not\equiv0$), such that in the apolar ring we have
    $$(y_1+ty_3)\cdot(y_3+\mu(t)y_2)\equiv0 \rightarrow \mu(t)y_1y_2+ty_3^2\equiv0.$$
    Multiplying by $y_3$, we get that 
    \begin{equation}
    \label{EQ:secondconditionsthreefolds}
        y_3^3=0,
    \end{equation}
    i.e. that the point $R=P_3$ belongs to the cubic $X=V(f)$. Moreover, multiplying by $y_1$ or $y_2$, we respectively get
    \begin{equation}
    \label{EQ:thirdconditionsthreefolds}
        y_1^2y_2=0 \quad \mbox{and} \quad y_1y_2^2=0.
    \end{equation}

    By recollecting the conditions in Equations \eqref{EQ:firstconditionthreefolds}, \eqref{EQ:secondconditionsthreefolds}, and \eqref{EQ:thirdconditionsthreefolds}, one obtains $19$ independent conditions of degree three in $\cA_f$; once these are imposed, and observing that we can still choose the fifth point $P_4$ in such a way it belongs to the cubic $V(f)$, we can write the cubic $f$ as a general element of the linear system $(\star)$, as in the statement of Theorem \ref{THM:linearspaces}.
\end{configuration}
\medskip
\ \\
Let us just say a few words for $n\geq5$: in the configuration arising above, let us denote by $P_{n-1}$ the intersection point between the $(n-2)$-plane and the line $\cV$. Let us moreover set $P_0,\dots,P_{n-4}$, general points in $\Lambda$ in general position, $P_{n-3}$ a general point in $(\iota_f(P_{n-1})\cap\Pi)\setminus\Lambda$ and $P_{n-2}$ the intersection point between $\cV$ and $\iota_f(P_{n-1})$.
In the apolar ring $\cA_f$, we have then the following immediate relations:
\begin{equation}
    \label{EQ:firstconditionshighdimension}
     y_{n-1}y_i=0 \quad  \forall i = 0,\dots,n-2 \quad \mbox{and} \quad y_{n-2}y_i=0 \quad \forall i=0,\dots,n-4.
\end{equation}
Moreover, as done above, we know that for any $t\in\bK$, there exists $\mu(t)$ (with $\mu(0)=0$) such that 
$$(y_{n-3}+ty_{n-1})\cdot(y_{n-1}+\mu(t)y_{n-2})=0 \quad \mbox{from which we get} \quad ty_{n-1}^2+\mu(t)y_{n-3}y_{n-2}\equiv0.$$
Multiplying then by $y_{n-1}$, $y_{n-2}$, and $y_{n-3}$, we get respectively
\begin{equation}
\label{EQ:secondconditionshighdimension}
y_{n-1}^3=0, \qquad y_{n-2}^2y_{n-3}=0, \qquad \mbox{and} \qquad y_{n-2}y_{n-3}^2=0. 
\end{equation}
Finally, since the last point $P_n$ can be chosen on the cubic hypersurface $V(f)$, we have also in $\cA_f$ that $y_n^3=0$. Considering this condition, together with those in Equations \eqref{EQ:firstconditionshighdimension} and \eqref{EQ:secondconditionshighdimension}, we find that $f$ has to belong to the linear system $(\star)$ in \eqref{EQ:linearsystem}, as claimed.\\
To conclude, recall that by Remark \ref{REM:familysmooth} the general cubic in the system $(\star)$ is indeed smooth. 
\begin{remark}
    Let us prove here that the general cubic $\tilde{f}$ in the linear system $(\star)$ (Equation \eqref{EQ:linearsystem}) satisfy the desired conditions, namely
    \begin{enumerate}[(a)]
        \item the Hessian hypersurface $\cH_{\tilde{f}}$ contains an $(n-2)$-plane $\Pi$;
        \item there exists no point $P\in\cH_{\tilde{f}}$ such that $\Pi=\iota_{\tilde{f}}(P)$.
    \end{enumerate}
    Indeed, for $(a)$, one can easily see that taking the Hessian matrix $H_{\tilde{f}}$ of a general form $\tilde{f}$ in the system $(\star)$ and the $(n-2)$-plane $\Pi$, defined as in the construction above by $\Pi:=\{x_{n-2}=x_n=0\}$, one has that the matrix $H_{\tilde{f}|\Pi}$ has two dependent rows, i.e. $\Pi\subseteq\cH_{\tilde{f}}$.\\
    For $(b)$, one can see that by construction the point $P_{n-1}$ is in $\Pi$ and $\iota_{\tilde{f}}(P_{n-1})\simeq\bP^{n-2}\not\ni P_{n-1}$. On the other hand, if $R$ is a general point on $\Pi$, we know by construction that it is smooth for $\cH_{\tilde{f}}$ and $\iota_{\tilde{f}}(R)$ consists of a single point in $\cV$, not belonging to $\iota_{\tilde{f}}(P_{n-1})$. This means that $\iota_{\tilde{f}}(R)\cap\iota_{\tilde{f}}(P_{n-1})=\emptyset$, i.e., by symmetry, the $(n-2)$-plane $\Pi$ doesn't coincide with $\iota_{\tilde{f}}(P)$ for any point $P\in\cH_{\tilde{f}}$, as claimed.
\end{remark}

Theorem \ref{THM:linearspaces} is then completely proved.

\begin{remark}
    \label{REM:specialthreefold}
    Since it will be used in the following section, let us explicitly write the general element of the linear system $(\star)$, for $n=4$, hence defining a smooth cubic threefold in $\bP^4$:
    \begin{equation}
    \label{EQ:special3fold}
    \tilde{f}(x_0,x_1,x_2,x_3,x_4)=\alpha_0x_0^3+\alpha_1x_1^3+\alpha_2x_2^3+\alpha_3x_0^2x_1+\alpha_4x_1^2x_0+
     \end{equation}
     $$x_4(\beta_0x_0^2+\beta_1x_1^2+\beta_2x_2^2+\beta_3x_3^2+\beta_4x_0x_1+\beta_5x_1x_2)+x_4^2(\gamma_0x_0+\gamma_1x_1+\gamma_2x_2+\gamma_3x_3).$$
\end{remark}

\bigskip

Let us now conclude this section just saying a couple of words for the case of cubic forms of Thom-Sebastiani type, in the case of smooth cubic threefolds and fourfolds.

\begin{remark}
    Let $f$ be a cyclic polynomial defining a smooth threefold in $\bP^4$. It can be written as
    $$f=x_0^3+f_1(x_1,\dots,x_4)$$
    Let us then assume there exists a $2$-plane $\Pi\simeq\bP^2$ in $\cH_f$ and not entirely contained in $\Sing(\cH_f)$ (this case has been treated in Lemma \ref{LEM:TSSing}). With the same notation as that in Lemma \ref{LEM:TSSing}, we can write 
    $$\cH_f=V(x_0)\cup\tilde{\cH}_{f_1},$$
    where $\tilde{\cH}_{f_1}$ is the cone of $\cH_{f_1}$ over the point $P_0=[1:0:0:0:0]$. If $\Pi\subset V(x_0)=\iota_f(P_0)$, then we have that $\Pi$ is contained in the kernel of some point of the Hessian variety. Assume now that $\Pi$ is contained in $\tilde{\cH}_{f_1}$.\\
    If it is contained in $\cH_{f_1}$, then $f_1$ is itself of TS type by Theorem \ref{THM:TS}. Then we can write $f=x_0^3+x_1^3+f_2(x_2,x_3,x_4)$. Hence, there are two cases: either $f$ is the Fermat polynomial or $\Pi\subset V(x_0)\cup V(x_1)$. In both cases, we have that there exists a point $P\in\cH_f$ such that $\Pi\subseteq\iota_f(P)$ (for the former case, see Remark \ref{REM:Fermat}).\\
    The remaining possibility is that $\Pi$ is the joint variety $J=J(P_0,L)$, with $L$ being a line in $\cH_{f_1}$. See now Proposition \ref{PROP:surfacecase}: if there exists a point $P$ such that $L\subseteq\iota_{f_1}(P)$, then $\Pi\subseteq\iota_f(P)$, and we get the claim. \\
    Summarizing, if $\Pi$ is a $2$-plane contained in $\cH_f\setminus\Sing(\cH_f)$, there always exists a point $P\in\cH_f$ such that $\Pi\subseteq\iota_f(P)$, except for the case where $f$ can be written as 
    $$f(x_0,\dots,x_4)=x_0^3+\tilde{f}(x_1,\dots,x_4),$$
    where $\tilde{f}$ is as in Equation \eqref{EQ:specialsurface}.\\
    One can do the same reasoning for the case of cyclic cubic fourfolds: the only case where our resul does not hold is the one where the cubic form $f$ can be written as
    $$f(x_0,\dots,x_5)=x_0^3+\tilde{f}_1(x_1,\dots,x_5)$$
    or
    $$f(x_0,\dots,x_5)=x_0^3+x_1^3+\tilde{f}_2(x_2,\dots,x_5),$$
    where $\tilde{f}_1$ and $\tilde{f}_2$ are respectively those of Equations \eqref{EQ:special3fold} or \eqref{EQ:specialsurface}.
\end{remark}

Let us now consider the only remaining case:
\begin{remark}
    Let us now assume that $X=V(f)\subset\bP^5$ is a smooth cubic fourfold, defined by a polynomial $f$ of TS type but not cyclic. We can then write 
    $$f(x_0,\dots,x_5)=f_1(x_0,x_1,x_2)+f_2(x_3,x_4,x_5).$$
    One can refer to \cite{BFP3}[Example 2.8] for a detailed description. Since we are assuming that $f$ is not cyclic, then by Theorem \ref{THM:TS} we have that $\cH_{f_1}$ and $\cH_{f_2}$ are smooth cubic curves. From the analysis of $\cH_f$ it is then clear that there are no possibilities for a $3$-plane to be contained in $\Sing(\cH_f)$. Observe that $\cH_f$ consists of two components defined as the cones of $\cH_{f_i}$ over the $2$-plane $W_j$ for $i\neq j$, where $W_1=\{x_0=x_1=x_2=0\}$ and $W_2=\{x_3=x_4=x_5=0\}$: we have infinitely many $3$-planes in $\cH_f$, each of which is a joint variety $J(P_i,W_j)$, where $P_i\in\cH_{f_i}$. But then, by construction there is a point $T_i\in\cH_{f_i}$ such that $\iota_{f_i}(T_i)=P_i$, and so $\iota_f(T_i)=J(P_i,W_j)$. Hence, we have the claim: for any $3$-plane $\Pi$ there exists a point $P$ such that $\Pi\subseteq\iota_f(P)$.
\end{remark}

\section{An application: the reconstruction of some cubic threefolds from their hessians}
\label{SEC:reconstruction}

In this section, we give an application of the results presented in Section \ref{SEC:linearspaces}. For this, let us consider the hessian map
$$h_{d,n}:\bP(S^d_n)\dashrightarrow\bP(S_n^{(d-2)(n+1)}),$$
mapping a polynomial $f$ of degree $d$ to its associated hessian polynomial $h_f$. Observe that, because of the existence of polynomials in the Gordan-Noether locus, where the hessian polynomial vanishes identically, such a hessian map is not a morphism. Ciliberto and Ottaviani in \cite{CO} conjectured the following:
\begin{conjecture}[Ciliberto-Ottaviani]
\label{CONJ:CO}
    The hessian map $h_{d,n}$ is birational onto its image for every $d\geq3$ and for every $n\geq2$, except for the case where $(d,n)=(3,2)$.
\end{conjecture}
Again in \cite{CO}, the authors showed that the hessian map is generically finite, and proved the conjecture for cubic surfaces in $\bP^3$.\\ 
Here, we focus on the case of cubic threefolds, i.e. on the hessian map
$$h_{3,4}:\bP(S^3_4)\dashrightarrow\bP(S^5_4),$$
sending a cubic form in $5$ variables which is not of Gordan-Noether type to its hessian polynomial, which is a quintic form in $5$ variables.\\
We recall the following:
\begin{definition}
\label{DEF:WR}
    We say that a polynomial $f\in\bP(S^d_n)$ has Waring Rank $k$, if, up to a linear change of coordinates there exists $k$ linear forms $L_0,\dots,L_{k-1}$ such that $f=\sum_{i=0}^{k-1}L_i^d$.
\end{definition}

In \cite{CO}, Ciliberto and Ottaviani studied the locus of forms of Waring Rank $n+2$ in $\bP(S^d_n)$ and proved the following:
\begin{theorem}[\cite{CO},Theorem 6.5]
    \label{THM:WRn+2}
    The restriction of the map $h_{d,n}$ to the locus of forms of Waring Rank $n+2$ is birational onto its image. 
\end{theorem}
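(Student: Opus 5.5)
The plan is to write a general form of Waring rank $n+2$ in a normal form, compute its hessian explicitly, and show that the hessian determines the form up to scalar. After a change of coordinates, a general form of Waring rank $n+2$ is
$$f=\sum_{i=0}^{n}\lambda_i x_i^d+\lambda_{n+1}\ell_{n+1}^d,\qquad \ell_{n+1}=\sum_{i=0}^n a_ix_i,$$
with all $a_i,\lambda_k$ nonzero. Write $\ell_i:=x_i$ for $i\leq n$, so that $\ell_0,\dots,\ell_{n+1}$ are $n+2$ linear forms in general position satisfying a single linear relation. Then
$$H_f=d(d-1)\Bigl(\diag(\lambda_i x_i^{d-2})+\lambda_{n+1}\ell_{n+1}^{d-2}\,a\,a^t\Bigr),$$
and the matrix determinant lemma gives, up to a nonzero constant,
$$h_f=\prod_{k=0}^{n+1}\lambda_k\cdot\sum_{k=0}^{n+1}\frac{\mu_k}{\lambda_k}\prod_{j\neq k}\ell_j^{\,d-2},$$
where $\mu_k=1$ for $k\leq n+1$, up to the factors $a_k^2$. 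These factors $a_k^2$ are absorbed once the relation among the $\ell_j$ is normalized symmetrically, for instance by rescaling so that $\sum_k \ell_k=0$. So $h_f$ is the symmetric expression $\sum_k c_k\prod_{j\neq k}\ell_j^{d-2}$ with $c_k$ proportional to $1/\lambda_k$ after normalization.

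The reconstruction then has two steps.
\begin{enumerate}
\item Recover the arrangement $\{V(\ell_k)\}$ from $h_f$. Restricting to $V(\ell_k)$ kills every term except one, so
$$h_f|_{V(\ell_k)}=c_k\prod_{j\neq k}\ell_j^{d-2}|_{V(\ell_k)},$$
which is a product of $n+1$ distinct linear forms, each to the power $d-2$. Call a hyperplane $H$ \emph{split for $h$} if $h|_H$ is a product of linear forms. The claim is that, for general $(\ell,\lambda)$, the only split hyperplanes for $h_f$ are the $n+2$ hyperplanes $V(\ell_k)$.
\item Recover the weights. The $\ell_k$ are now determined up to scalars, and the linear relation $\sum\ell_k=0$ fixes their relative scalars up to one common factor. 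Expanding $h_f$ in the spanning set $\prod_{j\neq k}\ell_j^{d-2}$, which I would check is linearly independent for general arrangements using the restrictions above, gives the $c_k$ up to a common scalar. Hence $\lambda_k\propto 1/c_k$ and $[f]$ are determined. This proves injectivity on a dense open subset of the rank-$(n+2)$ locus, and so birationality onto the image in characteristic $0$.
\end{enumerate}

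The main obstacle is the claim in step 1 that there are no further split hyperplanes. I would first try a semicontinuity/specialization argument. Being split is a closed condition on $(H,h)$. It therefore suffices to exhibit one configuration where the split hyperplanes are exactly the $V(\ell_k)$ with the expected (reduced) structure, and then control extra components of the incidence correspondence by a dimension count: the split hyperplanes of a fixed $h$ form a closed set, which must be finite for general $h$ because otherwise a pencil of split restrictions would force $V(h)$ to contain too many linear spaces.

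Failing that, I would argue directly on a hyperplane $H\neq V(\ell_k)$. By the general position of the $\ell_j$, the restriction $h|_H$ is again of the form $\sum_k c_k\prod_{j\neq k}m_j^{d-2}$, with $m_j=\ell_j|_H$ being $n+2$ forms in $n$ variables satisfying two relations. I would then show, by restricting further to the lines $V(m_i)\cap V(m_j)$ and inducting on $n$ down to the case of binary forms, that such a sum cannot factor completely when the $c_k$ are general. Here the base case $n=1$ is checked by hand.
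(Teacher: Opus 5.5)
The paper gives no proof of this statement; it quotes it from \cite{CO} and uses it only for $(d,n)=(3,4)$.

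Judged on its own, the outer layers of your argument are sound. After rescaling so that $\sum_k\ell_k=0$ one does get $h_f\propto\prod_k\lambda_k\sum_k\lambda_k^{-1}\prod_{j\neq k}\ell_j^{d-2}$. These products are linearly independent, as restriction to the $V(\ell_k)$ shows. Step 2 then recovers $[f]$ once the hyperplanes $V(\ell_k)$ are known.

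The gap is Step 1, which carries the entire reconstruction and is not proved. As formulated it is in fact false for $n=2$: a hyperplane is then a line, every binary form over the algebraically closed field $\bK$ is a product of linear forms, so every line is split for $h_f$. This is no accident. For $(d,n)=(3,2)$ the quoted statement itself fails: a general ternary cubic has Waring rank $4$, so the rank-$4$ locus is dense in $\bP(S^3_2)$, while $h_{3,2}$ is generically $3:1$ (as recalled in the Introduction). Any correct proof must use $(d,n)\neq(3,2)$ somewhere, and yours never does.

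Your fallbacks do not repair this. Restricting to smaller linear spaces can only certify non-splitting on subspaces of dimension at least $2$. So the induction cannot bottom out at binary forms, where ``cannot factor completely'' is always false; it would have to stop at ternary forms, which already excludes $n=2$. The specialization route needs the split locus to be reduced at your special configuration, since otherwise extra components of the incidence can specialize onto the $V(\ell_k)$. You do not check this, and it is delicate for $d\geq4$, where every factor of $h_f|_{V(\ell_k)}$ is repeated $d-2$ times. Finally, the finiteness claim via ``too many linear spaces in $V(h_f)$'' is asserted rather than derived, and finiteness alone would not give exactly $n+2$ split hyperplanes.

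What the explicit shape of $h_f$ does give you cheaply is its singular locus. For $a\neq b$, every term $\prod_{j\neq k}\ell_j^{d-2}$ lies in $(\ell_a,\ell_b)^{d-2}$. For $d=3$ and distinct $a,b,c$, every term lies in $(\ell_a,\ell_b,\ell_c)^2$. Hence $V(h_f)$ is singular along the codimension-$2$ spaces $V(\ell_a,\ell_b)$ when $d\geq4$, and along the codimension-$3$ spaces $V(\ell_a,\ell_b,\ell_c)$ when $d=3$. The latter are empty for $n=2$, which is exactly where $(3,2)$ drops out. For $(d,n)=(3,4)$ they are the twenty lines of $\Sing(\cH_f)$ described in Section~\ref{SEC:reconstruction}. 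Each $V(\ell_k)$ is the span of the members of this family that it contains. You would still need to show that, for general data, $\Sing V(h_f)$ has no further linear components of that dimension. That is a concrete verification on one explicit family, not an open-ended statement about all hyperplane sections, and once it is done your Step 2 completes the argument.
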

Here, in the case of cubic threefolds, i.e. with $n=4$, we denote by $\cW_6\subset\bP(S^3_4)$ the set of cubic forms in $5$ variables of waring rank $6$.
Recall also that we denote by $\cU_4\subset\bP(S^3_4)$, the locus of cubic forms defining a smooth cubic threefold in $\bP^4$. In this section, we propose a sort of generalization of the above Theorem \ref{THM:WRn+2} in the case of cubic threefolds, and, so, in some sense, an evidence to the Ciliberto-Ottaviani conjecture \ref{CONJ:CO}. By saying this, we mean that we show that, given a general $[f]\in\cW_6$, the only element in $\cU_4$ having $h_f$ as associated hessian polynomial is $f$ itself (see Theorem  \ref{THM:CO}).\\

Let $[f]\in\cW_6$ general (in particular, $f$ is not of Waring Rank $k\leq5$): we can write
\begin{equation}
\label{EQ:generalWR}
    f=\sum_{i=0}^5L_i^3,
\end{equation}
or, without loss of generality, we can also consider a kind of {\it{normal form}}

\begin{equation}
    \label{EQ:WRnormalform}
    f=x_0^3+x_1^3+x_2^3+x_3^3+x_4^3+(a_0x_0+a_1x_1+a_2x_2+a_3x_3+a_4x_4)^3,
\end{equation}
where we have $L_i=x_i$ for $i=0,\dots,4$ and $L=L_5=\sum_{i=0}^4a_ix_i$.
For such an $f$, one can compute its Hessian matrix $H_f$ and its hessian polynomial $h_f$, respectively:
\begin{equation}
\label{EQ:HessianMatrixOfWaringRankn+2}
H_f=\Delta+L\cdot(a_ia_j)_{i,j=0}^n=\Delta+L a\cdot a^T,
\end{equation}
where $\Delta$ is the diagonal matrix $\diag(x_0,x_1,\dots,x_4)$ and
\begin{equation}
\label{EQ:HessianPolynomialOfWaringRankn+2}
h_f=\prod_{i=0}^n x_i+L\sum_{i=0}^n a_i^2\hat{x}_i,    
\end{equation}
where $\hat{x}_i=\prod_{j\neq i}x_j$.\\
First of all, one can easily observe that the general element $[f]\in\cW_6$ is in $\cU_4$, i.e. it defines a smooth cubic threefold in $\bP^4$.



Observe that if $[f]\in\cW_6$ is general, we can assume the following:
\begin{equation}
    \label{EQ:descriptionoff}
    f \mbox{ of Waring Rank exactly 6,} \quad [f]\in\cU_4 \quad \mbox{and} \quad f \mbox{ not of TS type.}
\end{equation}
In particular, in this case, observe that if $f$ is written as in \eqref{EQ:WRnormalform}, then any coefficient $a_i$ appearing in the linear form $L$ is different from zero (otherwise $f$ would be of TS type).\\
Let us now give a description of the hessian variety $h_f$ associated with $f$ (see also \cite{CO}[Prop. 6.4]):
\begin{lemma}
    Let $[f]$ be a general cubic form as in \eqref{EQ:descriptionoff}, written in the form \eqref{EQ:generalWR}. The hessian threefold $\cH_f$ associated with $f$ is singular along the twenty lines defined as the zero of three of the linear forms $L_i$'s, i.e.
    $$\Sing(\cH_f)=\bigcup_{i,j,k \mbox{ distinct in } \{0,\dots,5\}}V(L_i,L_j,L_k).$$
\end{lemma}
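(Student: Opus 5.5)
We work in the normal form \eqref{EQ:WRnormalform}, where $L_i=x_i$ for $i\le4$ and $L_5=L=\sum a_ix_i$ with all $a_i\neq0$. Since $\Sing(\cH_f)=\cD_{3}(f)$ by Theorem \ref{THM:preliminaries}, the statement is equivalent to showing that $\rk H_f(x)\le 3$ exactly when at least three of the six forms $L_0,\dots,L_5$ vanish at $x$. We compute ranks using \eqref{EQ:HessianMatrixOfWaringRankn+2}: $H_f(x)=\Delta(x)+L(x)\,aa^T$ is a rank-one perturbation of a diagonal matrix. A more symmetric viewpoint is also available. The six forms satisfy the single relation $\sum_i a_ix_i - L_5=0$, so $H_f(x)$ is the restriction to $\bK^5\subset\bK^6$ of the diagonal form $\diag(L_0(x),\dots,L_5(x))$ on $\bK^6$, up to the scalar $6$ and a change of coordinates. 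This symmetry makes all six indices play the same role, so it suffices to treat representative cases.

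\textbf{Inclusion $\supseteq$.} Take $x$ with $L_i(x)=L_j(x)=L_k(x)=0$. The diagonal quadric on $\bK^6$ then has rank at most $3$, and restricting it to a hyperplane keeps the rank at most $3$. Hence $x\in\cD_3(f)$. One can also check this directly in normal form. If $\{i,j,k\}\subset\{0,\dots,4\}$, then $\Delta(x)$ has three zero diagonal entries, and adding $L\,aa^T$ raises the rank by at most one, so the rank is at most $3$. If $5\in\{i,j,k\}$, then $H_f(x)=\Delta(x)$ has two zero entries, so the rank is again at most $3$. Since each $V(L_i,L_j,L_k)$ is a line by genericity, this gives the twenty lines.

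\textbf{Inclusion $\subseteq$.} This is the main step. Take $x$ at which at most two of the $L_i$ vanish, and show $\rk H_f(x)=4$ or $5$. Consider the diagonal form $D=\diag(\ell_0,\dots,\ell_5)$ with $\ell_i=L_i(x)$, which has rank $r\ge4$. It is restricted to the hyperplane $U=\{v\in\bK^6 : \sum_{i\le4}a_iv_i-v_5=0\}$, and in the chosen embedding $U=\mathrm{im}\,(v\mapsto(v,\sum a_iv_i))$. The key fact is standard: for a quadratic form of rank $r$ on $\bK^6$, its restriction to any hyperplane has rank at least $r-2$. This gives $\rk H_f(x)\ge4$ when $r\ge 6-?$. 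More precisely, $r\ge4$ only yields a bound of $\ge2$, which is too weak. So we will compute the rank explicitly by cases on $r$.

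\emph{Case $r=6$.} $D$ is nondegenerate. The restriction to $U$ is degenerate only if the normal vector $a'=(a_0,\dots,a_4,-1)$ is $D$-isotropic for $D^{-1}$, i.e.\ $\sum a_i^2/\ell_i+1/\ell_5=0$, and in that case the kernel is one-dimensional. So the rank is at least $4$.

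\emph{Case $r=5$, one $\ell_i=0$.} The radical of $D$ is $e_i$, and $e_i\notin U$ because all $a_i\neq0$. So $D|_U$ is isomorphic to the induced form on $\bK^6/\langle e_i\rangle$, which is nondegenerate of rank $5$.

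\emph{Case $r=4$, $\ell_i=\ell_j=0$.} The radical is $\langle e_i,e_j\rangle$, which meets $U$ in a line. The form $D|_U$ then factors through $U/(U\cap\langle e_i,e_j\rangle)$, which maps isomorphically onto $\bK^6/\langle e_i,e_j\rangle$ because $U+\langle e_i,e_j\rangle=\bK^6$. Hence the rank is $4$.

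Combining the cases, $\rk H_f(x)\ge4$, so $x\notin\cD_3(f)$. The one point to verify carefully is the transversality claims $e_i\notin U$ and $U+\langle e_i,e_j\rangle=\bK^6$. Both follow from the normal vector having all coordinates nonzero, which is exactly the non-TS assumption $a_i\neq0$ in \eqref{EQ:descriptionoff}.
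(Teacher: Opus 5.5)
Your argument is correct, but it is not the route the paper takes. The paper gives no proof of its own for this lemma. It only displays \eqref{EQ:HessianMatrixOfWaringRankn+2} and \eqref{EQ:HessianPolynomialOfWaringRankn+2} and refers to Proposition~6.4 of \cite{CO}, so the implicit verification is a direct study of the singular points of the quintic $h_f=\prod_i x_i+L\sum_i a_i^2\hat{x}_i$.

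You instead pass to $\cD_3(f)$ via Theorem~\ref{THM:preliminaries} and compute ranks. You write $H_f(x)=6\,M^T\diag(L_0(x),\dots,L_5(x))\,M$, with $M$ the injective $6\times 5$ coefficient matrix of the $L_s$. Then $\rk H_f(x)$ is the rank of the diagonal form restricted to the hyperplane $U=\mathrm{im}(M)$. Your three cases $r=6,5,4$ are handled correctly, and so is the transversality $e_s\notin U$, which is exactly the condition $a_s\neq 0$.

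The two routes trade off as follows. The direct computation on $h_f$ does not depend on the identity $\Sing(\cH_f)=\cD_{n-1}(f)$ from \cite{BFP2}, nor on smoothness, but it requires a case analysis of the partial derivatives of $h_f$. Your version does use that identity, legitimately, since $[f]\in\cU_4$ is part of \eqref{EQ:descriptionoff}. In return it is symmetric in the six forms, and the rank computation itself needs only $a_s\neq0$.

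Your approach also gives more for free. The same transversality argument shows that whenever some $L_s(x)$ vanishes, $\ker H_f(x)=\{v \mid L_t(v)=0 \text{ for all } t \text{ with } L_t(x)\neq 0\}$. This yields all of Lemma~\ref{LEM:iotaf} at once: $\cD_2(f)$ is the $15$ points, $\iota_f(\ell_{ijk})=\ell_{lmn}$, and $\iota_f(P_{ijkl})=\Pi_{mn}$.

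Two small clean-ups are worth making. First, delete the garbled sentence ending in ``$r\ge 6-?$''. The hyperplane bound $r-2$ only settles $r=6$; your case analysis is what actually does the work. Second, state the $r=6$ degeneracy criterion simply as $a'^TD^{-1}a'=0$, that is, $\sum_{i\le 4}a_i^2/\ell_i+1/\ell_5=0$. After clearing denominators this is exactly $h_f(x)=0$, a nice consistency check.
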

One can observe that these twenty lines intersect each other in $15$ distinct points, of the form $V(L_i,L_j,L_k,L_m)$ for $i,j,k,m\in\{0,\dots,5\}$ distinct. Let us now described how the correspondence defined by $\iota_f$ acts on these linear spaces. Let us start with the following:
\begin{definition}
    \label{DEF:constantlines}
    Given $[f]\in\bP(S^3_4)$ and a line $\ell\subset\cD_{3}(f)$, with $\ell\not\subseteq\cD_{2}(f)$, we say that $\ell$ has constant kernel (with respect to $\iota_f$) if there exists another line $r$ such that 
    $$\iota_f(P)\supseteq r \quad \forall P\in\ell \quad \mbox{and equality holds for P general}.$$
    In this case, we write $\iota_f(\ell)=r$.\\
    Similarly, if for a linear space $\Pi$ one has that for $P\in\Pi$ general $\iota_f(P)$ is constant, we simply write $\iota_f(\Pi)$ to mean $\iota_f(P)$, for $P\in\Pi$ general.
\end{definition}

Setting $\{0,1,2,3,4,5\}=\{i,j,k,l,m,n\}$, let us denote 
$$P_{ijkl}=V(L_i,L_j,L_k,L_l) \qquad \ell_{ijk}=V(L_i,L_j,L_k) \qquad \Pi_{ij}=V(L_i,L_j),$$
which are respectively points, lines and $2$-planes in $\bP^4$.
From the expression \eqref{EQ:HessianMatrixOfWaringRankn+2} of the Hessian matrix, one can easily prove the following:

\begin{lemma}
    \label{LEM:iotaf}
    Let $[f]$ as in \eqref{EQ:descriptionoff} written as in the expression \eqref{EQ:generalWR}. Then we have:
    \begin{itemize}
        \item each of the $20$ lines in $\Sing(\cH_f)$ has constant kernel, and in particular
        $$\iota_f(\ell_{ijk})=\ell_{lmn},$$
        \item $\cD_2(f)$ consists exactly of the $15$ points $P_{ijkl},$
        \item each of the $15$ points in $\cD_2(f)$ is such that
        $$\iota_f(P_{ijkl})=\Pi_{mn}\simeq\bP^2 \qquad \mbox{and also} \qquad \iota_f(\Pi_{mn})=P_{ijkl}.$$
    \end{itemize}
\end{lemma}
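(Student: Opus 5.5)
The plan is to work with the explicit form $H_f=\Delta+L\,aa^T$ from \eqref{EQ:HessianMatrixOfWaringRankn+2}. It is cleaner to change coordinates so that all six linear forms play symmetric roles. Since $f=\sum_{i=0}^5L_i^3$ with the $L_i$ spanning $(S^1_4)$ and satisfying one linear relation $\sum_i c_iL_i=0$ (all $c_i\neq0$ by genericity), the Hessian is
$$H_f(x)=6\sum_{i=0}^5 L_i(x)\,\ell_i\ell_i^T,$$
where $\ell_i$ denotes the coefficient vector of $L_i$. Thus $y\in\ker H_f(x)$ if and only if $\sum_i L_i(x)L_i(y)\ell_i=0$. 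The only linear relation among the $\ell_i$ is the one given by $(c_i)$, so this holds exactly when there is a scalar $\lambda$ with $L_i(x)L_i(y)=\lambda c_i$ for all $i=0,\dots,5$. The whole proof is then bookkeeping with this criterion.

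First I would treat a point $x\in\ell_{ijk}$, which has $L_i(x)=L_j(x)=L_k(x)=0$.
\begin{itemize}
\item If $\lambda\neq0$, the criterion forces $L_i(x)\neq0$ for every $i$, which is impossible here. So $\lambda=0$.
\item With $\lambda=0$ the conditions become $L_t(x)L_t(y)=0$ for $t\in\{l,m,n\}$.
\item For $x$ general on $\ell_{ijk}$ the values $L_l(x),L_m(x),L_n(x)$ are nonzero. Indeed, $L_i,L_j,L_k,L_l$ are independent, by genericity (any five of the six forms are independent).
\item Hence $\ker H_f(x)=\{L_l=L_m=L_n=0\}$, so $\iota_f(x)=\ell_{lmn}$ for $x$ general on the line.
\item At the special points $P_{ijkl}$ fewer conditions are imposed, so the kernel only grows and still contains $\ell_{lmn}$. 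This gives constant kernel in the sense of Definition \ref{DEF:constantlines}.
\end{itemize}

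The same computation handles a point $x=P_{ijkl}$. Again $\lambda=0$, and the kernel is cut out by $L_m(y)=L_n(y)=0$, which is $\Pi_{mn}$. So $P_{ijkl}\in\cD_2(f)$ with $\iota_f(P_{ijkl})=\Pi_{mn}$. Conversely, for $x$ general in $\Pi_{mn}$ the values $L_i(x),\dots,L_l(x)$ are nonzero, so the kernel is $V(L_i,L_j,L_k,L_l)=P_{ijkl}$. This gives $\iota_f(\Pi_{mn})=P_{ijkl}$.

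The remaining step, which I expect to be the main point, is that $\cD_2(f)$ contains no other points. Let $x\in\cD_2(f)$, so the kernel has dimension at least $3$.
\begin{itemize}
\item If $\lambda\neq0$ were possible, then all $L_i(x)\neq0$ and $L_i(y)=\lambda c_i/L_i(x)$. These are six prescribed values of five coordinates of $y$, so the solution set is at most one-dimensional in $\lambda$, giving a kernel of dimension at most $1$.
\item If $\lambda=0$, let $s$ be the number of indices with $L_t(x)\neq0$. The kernel is defined by $s$ conditions $L_t(y)=0$, and these are independent since $s\le5$.
\item The kernel therefore has dimension $5-s$, and $x\neq0$ forces $s\ge2$.
\item To get rank at most $2$ we need $5-s\ge3$, i.e. $s=2$, so $x$ lies on four of the hyperplanes $V(L_t)$, i.e. $x=P_{ijkl}$.
\end{itemize}
Running the same count with rank at most $3$ recovers the twenty lines, which is consistent with the previous lemma. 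The only genericity input is that every five of the $L_i$ are independent, which holds for general $f\in\cW_6$ (equivalently, all $a_i\neq0$ in \eqref{EQ:WRnormalform}).
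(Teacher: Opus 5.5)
Your proof is correct and is the same direct kernel computation the paper has in mind; the paper gives no details and only says the lemma follows easily from $H_f=\Delta+L\,aa^T$. Rewriting this as $H_f=6\sum_i L_i\,\ell_i\ell_i^T$ with the single relation $\sum_i c_i\ell_i=0$ is a symmetric repackaging of that computation, which treats coordinate points and mixed points uniformly.
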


We have then a rich configuration in $\cH_f$ given by these linear spaces. In particular:
\begin{lemma}
    \label{LEM:configuration}
    In the above situation, we have:
    \begin{itemize}
    \item Each of the $15$ planes $\Pi_{ij}$ contains exactly $4$ lines in $\Sing(\cH_f)$ and $6$ points in $\cD_2(f)$. Moreover, each of the $20$ lines in $\Sing(\cH_f)$ passes through $3$ points in $\cD_2(f)$.
    \item Each point in $\cD_2(f)$ belongs to four distinct lines in $\Sing(\cH_f)$ and each of these lines lies on $3$ of the $2$-planes above.
\end{itemize}
\end{lemma}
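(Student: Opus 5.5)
The statement is purely combinatorial once the incidences are expressed in terms of the six linear forms, so I will reduce every claim to counting index subsets. The key input is general position: since $f$ is general in $\cW_6$, any five of the six forms $L_0,\dots,L_5$ are linearly independent in the $5$-dimensional space $(S^1_4)$. In the normal form \eqref{EQ:WRnormalform} this is exactly the condition that all $a_i\neq0$, which we already have. The only linear relation among the six forms is then the single one $L_5=\sum a_ix_i$, with every coefficient nonzero. Consequently, for any subset $I\subseteq\{0,\dots,5\}$ with $|I|\leq 5$, the linear space $V(L_i: i\in I)$ has codimension exactly $|I|$ when $|I|\leq4$ and is empty when $|I|=5$. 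It also follows that
$$V(L_i: i\in I)\subseteq V(L_j: j\in J) \iff J\subseteq I \quad (|I|\leq4),$$
because $L_j$ vanishing on $V(L_i:i\in I)$ means $L_j\in\langle L_i\rangle_{i\in I}$, which by independence of any five forms forces $j\in I$. In particular, distinct index sets of size $2$, $3$ and $4$ give distinct planes, lines and points respectively.

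With this dictionary, I would verify each incidence by counting. A point $P_{ijkl}$ lies on $\ell_{abc}$ iff $\{a,b,c\}\subset\{i,j,k,l\}$, and it lies on $\Pi_{ab}$ iff $\{a,b\}\subset\{i,j,k,l\}$. A line $\ell_{abc}$ lies in $\Pi_{ij}$ iff $\{i,j\}\subset\{a,b,c\}$.

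For the first bullet, fix a plane $\Pi_{ij}$. The lines in it are indexed by the sets $\{i,j,c\}$ with $c$ chosen among the $4$ remaining indices, which gives $4$ lines. The points in it are the sets $\{i,j,c,d\}$, giving $\binom42=6$ points. On a fixed line $\ell_{ijk}$, the points are the sets $\{i,j,k,d\}$ with $d$ among the $3$ remaining indices, giving $3$ points.

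For the second bullet, the lines through $P_{ijkl}$ correspond to the $3$-subsets of $\{i,j,k,l\}$, giving $\binom43=4$ lines. The planes containing $\ell_{ijk}$ correspond to the $2$-subsets of $\{i,j,k\}$, giving $3$ planes. By the previous lemmas these exhaust the objects concerned: $\Sing(\cH_f)$ is exactly the union of the $20$ lines, and $\cD_2(f)$ is exactly the $15$ points.

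The only genuinely non-formal step is the general-position claim that any five of the $L_i$ are independent, together with the consequence that no coincidences occur. This is where I would be most careful. I would justify it directly from the normal form: any five forms are either $x_0,\dots,x_4$, or four of the $x_i$ together with $L$, and the latter is independent precisely because the missing coefficient $a_m$ is nonzero. Everything else is bookkeeping.
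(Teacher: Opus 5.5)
Your proposal is correct and takes the route the paper has in mind. The paper states this lemma without a written proof, treating it as immediate from the definitions of $P_{ijkl}$, $\ell_{ijk}$, $\Pi_{ij}$ and the two preceding lemmas; your index-subset counting, based on the fact that any five of the $L_i$ are independent (equivalently, all $a_i\neq0$ in the normal form), supplies exactly that verification. It would be worth saying explicitly that a line contained in the finite union $\Sing(\cH_f)$ of the $20$ lines must, by irreducibility, be one of them, since this is what justifies your remark that these objects exhaust the lines in $\Sing(\cH_f)$.
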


This rich geometry allows us to prove the main theorem of this section, namely Theorem C from the Introduction:

\begin{theorem}
    \label{THM:CO}
    Let $[f]\in\cW_6$ general, then considering the restriction $h_{{3,4}_{|\cU_4}}$ of the hessian map to the smooth locus $\cU_4\subseteq\bP(S^3_4)$, we have
    $$h_{{3,4}_{|\cU_4}}^{-1}(h_{{3,4}_{|\cU_4}}(f))=\{[f]\}.$$
    In other words, if $[g]$ is a cubic form in $\cU_4$ such that $\cH_g=\cH_f$, then $[f]=[g]$.
\end{theorem}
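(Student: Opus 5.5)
Let $[g]\in\cU_4$ with $\cH_g=\cH_f$; the plan is to exploit the rigid configuration of linear spaces in $\cH_f$ (Lemmas \ref{LEM:iotaf} and \ref{LEM:configuration}) to show that $\iota_g$ must act on them exactly as $\iota_f$ does, and then to recover the coefficients of $g$ from the resulting vanishing conditions in the apolar ring $\cA_g$.

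\textbf{Step 1: $g$ is not of TS type and the singular loci agree.} Since $\Sing(\cH_g)=\Sing(\cH_f)$ is the union of the twenty lines $\ell_{ijk}$, it has dimension $1=n-3$. By Theorem \ref{THM:TS}, $g$ is not of Thom-Sebastiani type. Moreover, $\cD_3(g)=\cD_3(f)$ by Theorem \ref{THM:preliminaries}(2).

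\textbf{Step 2: $\iota_g$ on the planes $\Pi_{mn}$.} Each $\Pi_{mn}\simeq\bP^2$ lies in $\cH_g$, so Theorem \ref{THM:linearspaces} applies. Either $\Pi_{mn}=\iota_g(Q_{mn})$ for some point $Q_{mn}$, or $g$ lies in the linear system $(\star)$ in suitable coordinates. I would exclude the second alternative by comparing singular loci. For $\tilde f$ as in \eqref{EQ:special3fold}, I would check, using the explicit configuration ($\iota_{\tilde f}(R)\simeq\bP^2$ meeting $\Pi$ in a line, $R\in\cD_2$, and $\cV$ a line), that $\Sing(\cH_{\tilde f})$ cannot be twenty lines arranged as in Lemma \ref{LEM:configuration}. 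For example, the point $R=\cV\cap\Pi$ would have to be one of the fifteen points of $\cD_2$. In that case $\iota_{\tilde f}(R)$ would be one of the planes $\Pi_{ab}$, forced to meet $\Pi_{mn}$ in a line and to contain a point of $\cV$, and a count of incidences contradicts Lemma \ref{LEM:configuration}.

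Once this is done, each $Q_{mn}$ lies in $\cD_2(g)$, since its kernel is a plane. Because $\dim\cD_2(g)\le1$, and because $\cD_2(g)\subseteq\Sing(\cH_g)$ is contained in the twenty lines, I would show that $Q_{mn}=P_{ijkl}$. The idea is that $\iota_g(Q_{mn})=\Pi_{mn}$, together with symmetry, forces $Q_{mn}\in\iota_g(x)$ for every $x\in\Pi_{mn}$. Taking $x$ on the four singular lines inside $\Pi_{mn}$, whose $\iota_g$ must again be lines in $\Sing(\cH_g)$ by a version of the same argument for lines with constant kernel, pins $Q_{mn}$ down to the intersection $\ell_{jkl}\cap\ell_{ikl}\cap\dots=P_{ijkl}$. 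The outcome is that $\iota_g(P_{ijkl})=\Pi_{mn}=\iota_f(P_{ijkl})$ for all fifteen quadruples.

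\textbf{Step 3: reconstruction.} In the apolar ring $\cA_g$, write $y_i$ for the point $P_{jklmn}$-type coordinate vertices; concretely, use coordinates with $L_i=x_i$, so that the coordinate points are the $P_{\cdot}$'s of $V(x_j: j\ne i)$. The identities $\iota_g(P_{ijkl})=\Pi_{mn}$ translate into products vanishing: $y\cdot y'=0$ whenever the two points are incident in this sense. Collecting these, together with the analogous relations coming from the sixth form $L_5$, gives a large number of vanishing cubic monomials. Using Lemma \ref{LEM:apolarfacts}(a),(e), I expect these relations to force $g=\sum_i c_ix_i^3+c_5L^3$, with the hyperplanes $V(L_i)$ determined because the planes $\Pi_{ij}$ are. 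Finally, the equality $h_g=\lambda h_f$, expanded via \eqref{EQ:HessianPolynomialOfWaringRankn+2}, fixes the scalars $c_i$ up to a common factor; the generality of the $a_i$ (all nonzero) leaves no other solutions. Hence $[g]=[f]$.

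\textbf{Main obstacle.} I expect the main obstacle to be Step 2: excluding the $(\star)$ alternative and rigidly identifying $Q_{mn}$ with $P_{ijkl}$. This requires careful bookkeeping of how $\iota_g$ permutes the twenty lines and fifteen points using only the incidence data. My first attempt would be to show directly that $\iota_g$ maps singular lines with constant kernel to singular lines, and then to use the incidence counts of Lemma \ref{LEM:configuration} to rigidify the permutation.
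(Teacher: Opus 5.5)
\textbf{Gap in Step 2.} Your identification $Q_{mn}=P_{ijkl}$ is circular. You place $Q_{mn}$ on $\iota_g(\ell)$ for the four special lines $\ell\subset\Pi_{mn}$ and then read off $Q_{mn}=\ell_{jkl}\cap\ell_{ikl}\cap\cdots$. That step uses $\iota_g(\ell_{mna})=\ell_{\{i,j,k,l\}\setminus\{a\}}=\iota_f(\ell_{mna})$, which is exactly what has to be proved. If you only know that the lines $\iota_g(\ell)$ are special and concurrent at $Q_{mn}$, you learn nothing beyond $Q_{mn}\in\cD_2(f)$, because every special point lies on four special lines.

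What you honestly get at this stage (this is Corollary \ref{COR:constantlinesforiotag}) is three things: $\cD_2(g)=\cD_2(f)$ as sets, some bijection between the fifteen special points and the fifteen special planes, and constant kernels for the twenty lines. Your fallback plan of rigidifying that bijection by incidence counts cannot succeed. Label $P_{ijkl}$ by its complementary pair $\{m,n\}$ and fix a transposition $s$ of $\{0,\dots,5\}$. The assignment $P_{ijkl}\mapsto\Pi_{s(m)s(n)}$ has the following properties:
\begin{itemize}
\item it satisfies $x\in\iota(y)\iff y\in\iota(x)$ on special points;
\item it never puts a point in its own kernel;
\item it sends the three special points of each special line $\ell_{uvw}$ to three planes through the common special line $\ell_{s(T)}$, where $T=\{u,v,w\}^c$.
\end{itemize}
So it is compatible with all the incidence data of Lemma \ref{LEM:configuration}, yet it differs from $\iota_f$.

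What rules this twist out is that under it some special point $P$ on a special line $r$ has $\iota_g(P)\cap r\neq\emptyset$. Excluding that requires an actual computation with $g$; this is the paper's Lemma \ref{LEM:keyforinvolution}. Assuming $\iota_g(P)\cap r\neq\emptyset$, one normalizes coordinates and shows $\iota_g(P_4)=V(x_4,L)$. The resulting vanishings in $\cA_g$ give $g=\sum_k\alpha_kx_k^3+\lambda Lx_4^2$, with the relevant coefficients nonzero. One then finds a nonzero $3\times3$ minor of $H_g(Q_{04})$, contradicting $Q_{04}\in\cD_2(g)$. Only after this does the fact that exactly three special planes miss a given special line force $\iota_g=\iota_f$ on $\Sing(\cH)$ (Proposition \ref{PROP:sameinvolution}). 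That equality is precisely what your Step 3 needs, since the relations $a_k(A_i)^j=a_j(A_i)^k$ come from $Q_{jk}\in\iota_g(P_i)=V(x_i,L)$.

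Two lesser remarks. First, your exclusion of the $(\star)$ alternative assumes $\iota_{\tilde f}(R)$ is a special plane, which is not available a priori. The paper instead uses Lemma \ref{LEM:bruttaftilde}: the distinguished plane of the $(\star)$ normal form meets $\Sing(\cH)$ in at most two lines, whereas every $\Pi_{mn}$ contains four special lines. Second, your endgame is fine and genuinely different from the paper's. For $g=\sum_{i=0}^4c_ix_i^3+c_5L^3$, the coefficients of the monomials $x_k\hat{x}_i$ in $h_g=\mu h_f$ force $c_0=\dots=c_4$, and the coefficient of $x_0\cdots x_4$ then forces $c_5=c_0$. This avoids the appeal to Theorem \ref{THM:WRn+2}.
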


To prove the Theorem, let $[f]$ be a general element in $\cW_6$ as in \eqref{EQ:descriptionoff} and let us assume that $[g]\in\cU_4$ is a cubic form such that $\cH:=\cH_f=\cH_g$.
\begin{remark}
    Obviously, from the equality $\cH_f=\cH_g$ one has that also the singular loci coincide, i.e. $\Sing(\cH_f)=\Sing(\cH_g)$. Since both $[f]$ and $[g]$ are in $\cU_4$, by Theorem \ref{THM:preliminaries}, we have
    $$\cD_3(f)=\cD_3(g).$$
    Observe that, of course, $\Sing(\cD_3(f))=\Sing(\cD_3(g))$, but a priori it is not true that the loci $\cD_2(f)$ and $\cD_2(g)$ coincide. Indeed, in this case, we know that $\cD_2(f)=\Sing(\cD_3(f))$ (see Lemma \ref{LEM:iotaf}), but a priori we just have $\cD_2(g)\subseteq\Sing(\cD_3(g)$ (for this last fact, one can see for example \cite{BFP2}).
\end{remark}

\begin{remark}
    \label{REM:gnotTS}
    Since $\Sing(\cH_f)=\Sing(\cH_g)$, which has dimension $1$, we get that $g$ is not of Thom-Sebastiani type (by Theorem \ref{THM:TS}). Hence, by Proposition \ref{PROP:Grenoble} and Theorem \ref{THM:Sigmaplus} we have respectively
    $$\cD_1(g)=\emptyset \qquad \mbox{and} \qquad \dim(\cD_2(g))\leq0.$$
\end{remark}
\medskip
\ \\
Let us present here the steps of the proof. In Subsection \ref{SUBS:constantlines} we will prove that the lines contained in $\cD_3(g)(=\cD_3(f))$ have constant kernel with respect to $\iota_g$ in the sense of Definition \ref{DEF:constantlines}. In Subsection \ref{SUBS:coincideiota} we will show that the correspondences $\iota_f$ and $\iota_g$ act in the same way on $\Sing(\cH)$. Finally, in Subsection \ref{SUBS:againWR} we will show that $g$ itself lives in $\cW_6$. At this point, one can conclude by using the above Theorem \ref{THM:WRn+2}.

\subsection{Step 1}
\label{SUBS:constantlines}
\ \\
From Section \ref{SEC:linearspaces}, we know that given a smooth cubic threefold $V(F)$ not of TS type then either the cubic form $F$ can be written as in Equation \eqref{EQ:special3fold} or for any $2$-plane $\Pi$ contained in $\cH_F$ there exists a point $P\in\cD_2(F)$ such that $\Pi=\iota_F(P)$. Let us show that in our situation, in the hypothesis of Theorem \ref{THM:CO}, the cubic form $g$ can not be written as in the expression \eqref{EQ:special3fold}. Let us start with the following:
\begin{lemma}
\label{LEM:bruttaftilde}
    Let $\tilde{f}$ be as in Equation \eqref{EQ:special3fold}, and assume it is not of Thom-Sebastiani type. Then there exists a $2$-plane in $\cH_{\tilde{f}}$ which intersects $\cD_3(f)$ in $2$ lines.
\end{lemma}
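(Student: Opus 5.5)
The natural candidate for the $2$-plane is the one produced in the construction of Theorem \ref{THM:linearspaces} for $n=4$. Setting $n=4$ there, $\Pi=\{x_{n-2}=x_n=0\}$ becomes $\Pi=\{x_2=x_4=0\}=\langle P_0,P_1,P_3\rangle$. This is the plane containing $\Lambda=\{P\}=\{P_0\}$, the point $R=P_3$, and the line $\langle P_0,P_1\rangle=\iota_{\tilde f}(R)\cap\Pi$. By the Remark following the proof of Theorem \ref{THM:linearspaces}, $\Pi\subseteq\cH_{\tilde f}$. It is not contained in $\cD_3(\tilde f)$, since $\tilde f$ is not of TS type (Theorem \ref{THM:charactTS}). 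Hence $\Pi\cap\cD_3(\tilde f)$ is a curve in $\Pi$. The plan is to compute this curve explicitly and show that it consists of two lines.

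From the configuration we already expect two lines in $\Pi\cap\cD_3(\tilde f)$. The first is $\ell_1=\langle P_0,P_1\rangle=\iota_{\tilde f}(R)\cap\Pi$: for every $z\in\ell_1$ we have $R\in\iota_{\tilde f}(z)$ by symmetry, and also $\iota_{\tilde f}(z)\ni A$ where $A$ is the point of $\cV$ attached to the fibre $Y_A=\ell_1$, namely $T=P_2$. Thus $\iota_{\tilde f}(z)\supseteq\langle P_2,P_3\rangle$, so $\ell_1\subseteq\cD_3$. The second is a line through $R=P_3$ and $\Lambda=P_0$, i.e. $\ell_2=\langle P_0,P_3\rangle$. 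Its points should have kernel containing both $R$-direction data and the full line $\cV$ (recall that the proof of Lemma \ref{LEM:Vline} gives $\iota(z)\supseteq\cV$ for $z\in\Lambda$).

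To make this rigorous, I would write $H_{\tilde f}$ restricted to $\Pi$ using the explicit form \eqref{EQ:special3fold}, evaluated at $x_0P_0+x_1P_1+x_3P_3$. After deleting the row and column forced to be dependent, this is a $5\times5$ symmetric matrix with linear entries in $x_0,x_1,x_3$ and with many zeros (the only nonzero entries come from the monomials listed). Since $\Pi\subseteq\cH_{\tilde f}$, its rank is at most $4$. The locus $\Pi\cap\cD_3$ is then cut out by the $4\times4$ minors. The concrete step is to find a kernel vector $v(x)$ of $H_{\tilde f}|_\Pi$, expressed as a vector of polynomials in $x_0,x_1,x_3$. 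The condition rank $\le3$ then amounts to the vanishing of the adjugate, which is proportional to $v\,v^t$ times a scalar. I expect that scalar to factor as a power of $x_3$ times a power of a linear form such as $x_1$. This yields exactly the lines $\{x_3=0\}=\ell_1$ and $\{x_1=0\}=\ell_2$ in $\Pi$.

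The main obstacle is this factorization. The adjugate computation must be organized so that genericity of the coefficients $\alpha_i,\beta_i,\gamma_i$ (subject to $\tilde f$ not being TS) can be used to show that no further components appear. It must also show that both lines genuinely occur, rather than collapsing or being swallowed by $\cD_2$. In particular, I need to check that $\ell_2\not\subseteq\cD_2(\tilde f)$ and that $\ell_1\ne\ell_2$; the latter holds because $P_3\notin\ell_1$, as $P_3\notin\iota_{\tilde f}(P_3)$ by \eqref{EQ:notdiagonal}. If the full minor computation is too heavy, a fallback is to verify directly that $\ell_1,\ell_2\subseteq\cD_3$ by exhibiting two independent kernel vectors along each line, for instance $P_2,P_3$ on $\ell_1$. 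This fallback suffices for the statement as needed later, which is that $\Pi\cap\cD_3$ contains two lines, whereas for $\Pi_{ij}$ in the Waring case the intersection is four lines.
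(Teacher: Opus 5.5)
You pick the same plane $\Pi=\{x_2=x_4=0\}$ and the same method as the paper (the $4\times4$ minors of $H_{\tilde f}|_\Pi$). However, the two lines you commit to, $\{x_3=0\}$ and $\{x_1=0\}$, are not contained in $\cD_3(\tilde f)$ for general $\tilde f$, and the factorization you predict is false.

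On $\Pi$ (coordinates $x_0,x_1,x_3$), columns $2$ and $3$ of $H_{\tilde f}$ are $\beta_5x_1e_4$ and $2\beta_3x_3e_4$. The upper-left $2\times2$ block $M$ depends only on $x_0,x_1$. Hence, for $\beta_3\beta_5\neq0$ and $(x_1,x_3)\neq(0,0)$, one gets $\rk H_{\tilde f}=2+\rk M$, so the rank drops to $3$ exactly where $p=\det M=0$. In your adjugate language, the kernel vector is $v=(0,0,2\beta_3x_3,-\beta_5x_1,0)$ and $\mathrm{adj}(H_{\tilde f}|_\Pi)=-p\,vv^t$. The factors $x_1,x_3$ sit in $v$, not in the scalar, and $vv^t$ vanishes only where $x_1=x_3=0$, i.e.\ at the single point $P_0$. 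So $\Pi\cap\cD_3(\tilde f)=V(p)\cup\{P_0\}$. The two lines are the components of the binary quadric $V(p)$: they pass through $R=P_3$ and in general not through $P_0$. This is exactly the paper's computation: the only nonvanishing minors are $-(2\beta_3x_3)^2p$, $-(\beta_5x_1)(2\beta_3x_3)p$ and $-(\beta_5x_1)^2p$, together with $p\not\equiv0$ (otherwise $\Pi\subseteq\cD_3$ and $\tilde f$ would be TS).

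The configuration argument behind your guess is where things go wrong. The line $\langle P_0,P_1\rangle=\iota_{\tilde f}(R)\cap\Pi$ is the fibre $Y_A$ for $A=R$, not for $A=T$, so the two kernel points you list coincide. Concretely, for $z\in\langle P_0,P_1\rangle$ one has $H_{\tilde f}(z)\,e_2=\beta_5x_1e_4\neq0$, coming from the monomial $x_1x_2x_4$. So $P_2\notin\iota_{\tilde f}(z)$, and your fallback check fails.

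Similarly, $\langle P_0,P_3\rangle=Y_T$, and $\iota_{\tilde f}(z)\supseteq\cV$ holds only for $z\in\Lambda=\{P_0\}$. Along $x_1=0$ the block is $M=x_0\left(\begin{smallmatrix}6\alpha_0&2\alpha_3\\2\alpha_3&2\alpha_4\end{smallmatrix}\right)$, which has rank $2$ for general coefficients. So the general point of that line has rank $4$ and kernel $\{P_2\}$.

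Finally, even a correct containment statement would not be enough for the application. Corollary \ref{COR:gnotastildef} needs the one-dimensional part of $\Pi\cap\cD_3$ to be \emph{exactly} two lines, because each special plane $\Pi_{ij}$ contains four singular lines and hence also contains two. You therefore have to determine $\Pi\cap\cD_3(\tilde f)$ completely, which the explicit minor computation above does.
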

\begin{proof}
    First of all, let us recall that from the construction of $\tilde{f}$ in Subsection \ref{SUBS:higher} (see Configuration \ref{CONF:3-folds}), we know that in $\cH_{\tilde{f}}$ there exists the $2$-plane $\Pi$ passing through the coordinate points $P_0,P_1,P_3$, i.e. $\Pi=\{x_2=x_4=0\}$. Let us now write the Hessian matrix of $\tilde{f}$ evaluated in this plane $\Pi$:
   
    $$H_{{\tilde{f}}|\Pi}=\begin{pmatrix}
        6\alpha_0x_0+2\alpha_3x_1&2\alpha_3x_0+2\alpha_4x_1&0&0&2\beta_0x_0+\beta_4x_1\\2\alpha_3x_0+2\alpha_4x_1&6\alpha_1x_1+2\alpha_4x_0&0&0&2\beta_1x_1+\beta_4x_0\\0&0&0&0&\beta_5x_1\\0&0&0&0&2\beta_3x_3\\2\beta_0x_0+\beta_4x_1&2\beta_1x_1+\beta_4x_0&\beta_5x_1&2\beta_3x_3&2(\gamma_0x_0+\gamma_1x_1+\gamma_3x_3)
    \end{pmatrix}.$$
    Since we are looking to $\cD_3(f)$, we have to consider the vanishing of all the $4\times4$ minors. With our usual enumeration from $0$ to $4$, we have essentially to consider the following minors (where $A^{i,j}$ denotes the minor obtained from the matrix $A$ by deleting the $i-$th row and the $j-$th column):
    $$(H_{\tilde{f}})^{2,2}\qquad (H_{\tilde{f}})^{2,3}\qquad (H_{\tilde{f}})^{3,3}.$$
    One can easily compute the determinants of these submatrices: by setting $p(x_0,x_1)$ as the determinant of the $2\times2$ matrix in the upper-left corner, in the order, we have 
    $$\det((H_{\tilde{f}})^{2,2})=-(2\beta_3x_3)^2\cdot p, \quad \det((H_{\tilde{f}})^{2,3})=-(\beta_5x_1)(2\beta_3x_3)\cdot p, \quad \det((H_{\tilde{f}})^{3,3})=-(\beta_5x_1)^2\cdot p.$$
    Observe now that $p\not\equiv0$, otherwise $\Pi$ would be contained in $\Sing(\cH_{\tilde{f}})$, which would have then dimension $2$, i.e. $\tilde{f}$ would be of TS type, against our assumptions. Moreover, in the case where $p\neq0$, then we get just the point $P_0$: all the other points in the intersection $\Pi\cap\Sing(\cH_f)$ are described by $p=0$, where $p$ is a polynomial of degree $2$ in two variables. We then get the claim.    
\end{proof}

\begin{corollary}
\label{COR:gnotastildef}
    Let $[g]$ be as in the Theorem \ref{THM:CO}. Then for any $2$-plane $\Lambda$ contained in $\cH_g$ there exists a point $P\in\cD_2(g)$ such that $\iota_g(P)=\Lambda$.
\end{corollary}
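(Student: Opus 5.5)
By Remark \ref{REM:gnotTS}, $g$ is not of Thom-Sebastiani type, and $[g]\in\cU_4$. So Theorem \ref{THM:linearspaces}, with $n=4$, applies to every $2$-plane $\Lambda\subset\cH_g$. It gives a dichotomy: either $\Lambda=\iota_g(P)$ for some $P\in\cH_g$, or, after a change of coordinates, $g$ is a general element of $(\star)$, i.e.\ of the form $\tilde f$ in \eqref{EQ:special3fold}. The plan is to rule out the second alternative using the geometry of $\Sing(\cH)=\Sing(\cH_f)$.

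Suppose $g$ can be written as in \eqref{EQ:special3fold}. By Lemma \ref{LEM:bruttaftilde}, applied to $g$ (which is not of TS type), there is a $2$-plane $\Pi\subset\cH_g$ meeting $\cD_3(g)=\Sing(\cH_g)$ in a conic $p=0$. This conic is either a pair of lines through $P_0$, or a double line (the lemma's count of "2 lines"); in the degenerate case where it is irreducible, it is a smooth conic.

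Now $\cH_g=\cH_f=\cH$ and $\Sing(\cH_g)=\Sing(\cH_f)$. By the Lemma preceding Lemma \ref{LEM:iotaf}, the latter is the union of the $20$ lines $\ell_{ijk}$.

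\begin{itemize}
\item If the conic is smooth, it cannot lie in a union of lines, so this case is excluded.
\item Otherwise $\Pi\cap\Sing(\cH)$ contains two lines through a common point, say $\ell_{ijk}$ and $\ell_{i'j'k'}$, which span $\Pi$.
\end{itemize}

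The main step is to show that any plane spanned by two concurrent singular lines is one of the $\Pi_{ab}=V(L_a,L_b)$. Two of the twenty lines meet only when $|\{i,j,k\}\cap\{i',j',k'\}|=2$, at the point $P_{ijkl}$. In that case the two lines both lie in $V(L_a,L_b)$, where $\{a,b\}$ is the common pair. So $\Pi=\Pi_{ab}$, which contains $4$ singular lines by Lemma \ref{LEM:configuration}. Hence $\Pi\cap\Sing(\cH)$ consists of $4$ lines, not $2$, contradicting the description from Lemma \ref{LEM:bruttaftilde}. The double-line case is handled the same way: $\Pi$ then contains at least one line $\ell_{ijk}$. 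Since $\Pi\cap\Sing(\cH)$ is a single line, $\Pi$ must avoid the other singular lines through its points. But each of the three points of $\cD_2(f)$ on $\ell_{ijk}$ lies on three further singular lines, and one checks from Lemma \ref{LEM:configuration} that a plane through $\ell_{ijk}$ containing none of these is impossible only in the special planes. I expect this double-line subcase to be the delicate point, and I would try first to show $p$ is not a square by genericity. If that fails, I would use a degree count: $\Pi\subset\cH$ is a plane in the quintic $\cH$, and the singular locus restricted to $\Pi$ must carry the multiplicity structure forced by $\cH$ being a union along $\Pi$ of $\Pi$ and a residual quartic.

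Having excluded $(\star)$, Theorem \ref{THM:linearspaces} gives $\Lambda=\iota_g(P)$ with $P\in\cH_g$. Since $\iota_g(P)\simeq\bP^2$, $H_g(P)$ has rank $2$, so $P\in\cD_2(g)$, which is the claim.
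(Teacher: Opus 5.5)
Your route is the paper's. You invoke the dichotomy of Theorem \ref{THM:linearspaces}, suppose $g$ has the form \eqref{EQ:special3fold}, and take the plane $\Pi=\{x_2=x_4=0\}$ of Lemma \ref{LEM:bruttaftilde}. You then observe that two concurrent singular lines $\ell_{ijk},\ell_{i'j'k'}$ share exactly two indices $a,b$, because any five of the $L_i$ are independent. So they span $\Pi_{ab}$, which carries four singular lines. This part is correct, and it justifies the paper's bare assertion that ``$\Pi$ has to be $V(L_i,L_j)$''.

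Two small corrections. First, $p$ depends only on $x_0,x_1$, so $\{p=0\}\subset\Pi$ is a union of lines through $P_3=R$, not $P_0$, and the ``smooth conic'' case never occurs. Second, if $\beta_3\beta_5=0$, a third line of $\cD_3(g)$ appears in $\Pi$. That still gives at most three lines, fewer than the four in $\Pi_{ab}$, so the contradiction survives.

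The genuine gap is the double-line case, which you leave open. Your sentence about planes through $\ell_{ijk}$ is not an argument. The combinatorics of the twenty lines cannot decide it, since a general plane through $\ell_{ijk}$ contains no other singular line; the real question is whether such a plane lies in $\cH$. Your first remedy, genericity, is unavailable: only $f$ is general, while $g$ is an arbitrary smooth cubic with $\cH_g=\cH_f$. The degree count is not carried out.

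The missing observation is that smoothness of $g$ rules this case out. The form $p$ is the Hessian of the binary cubic $c=\alpha_0x_0^3+\alpha_1x_1^3+\alpha_3x_0^2x_1+\alpha_4x_0x_1^2$. By covariance, if $c$ has three distinct roots you may reduce to $x_0^3+x_1^3$, whose Hessian is $36x_0x_1$. So $p$ is a nonzero square only if $c$ has a double root $[\rho_0:\rho_1]$.

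In that case, look at the line $\{[\rho_0s:\rho_1s:0:t:0]\}$. Along it, $\partial_0g,\dots,\partial_3g$ all vanish, because $\partial_0c(\rho)=\partial_1c(\rho)=0$ and $x_2=x_4=0$. Meanwhile $\partial_4 g=q(\rho_0,\rho_1)s^2+\beta_3t^2$, where $q=\beta_0x_0^2+\beta_1x_1^2+\beta_4x_0x_1$. This binary quadratic has a zero, which gives a singular point of $V(g)$, a contradiction.

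Hence $\{p=0\}$ consists of two distinct special lines meeting at $P_3$, and your concurrent-lines argument applies. The paper's proof uses this tacitly when it reads Lemma \ref{LEM:bruttaftilde} as producing two distinct lines. With this observation added, your proof is complete and coincides with the paper's.
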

\begin{proof}
    Reasoning by contradiction, by the dichotomy given by Theorem \ref{THM:linearspaces}, we can assume that $g$ can be written as in Equation \eqref{EQ:special3fold}. Then, by the proof of the above Lemma \ref{LEM:bruttaftilde}, we have that there exists a $2$-plane $\Pi$, on which the components of dimension $1$ of the intersection with $\cD_3(g)$ are just two lines. Since $\cH_f=\cH_g$, with $f$ of expression as in \eqref{EQ:generalWR}, we have that these lines are of the form $V(L_i,L_j,L_k)$: the plane $\Pi$ has then to be defined by $V(L_i,L_j)$. This is not possible, since the plane $V(L_i,L_j)$ intersects $\Sing(\cH_g)$ in $4$ distinct lines.
\end{proof}

From this we finally get the following:
\begin{corollary}
\label{COR:constantlinesforiotag}
    In the hypotheses of Theorem \ref{THM:CO}, we have:
    \begin{itemize}
        \item $\cD_2(f)=\cD_2(g)$;
        \item any of the $20$ lines, which are components of $\Sing(\cH)$, has constant kernel with respect to $\iota_g$.   
    \end{itemize} 
\end{corollary}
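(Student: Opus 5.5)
We will use Corollary \ref{COR:gnotastildef} together with the configuration of Lemmas \ref{LEM:iotaf} and \ref{LEM:configuration}. The fifteen $2$-planes $\Pi_{ij}=V(L_i,L_j)$ lie in $\cH=\cH_g$, since $\Pi_{ij}=\iota_f(P_{klmn})\subseteq\cH_f$.

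By Corollary \ref{COR:gnotastildef}, each $\Pi_{ij}$ equals $\iota_g(Q_{ij})$ for some point $Q_{ij}\in\cD_2(g)$. By Remark \ref{REM:gnotTS} we have $\cD_1(g)=\emptyset$, so each $Q_{ij}$ lies in $\cD_2(g)\setminus\cD_1(g)$ and its kernel is exactly a $\bP^2$. Hence the assignment $\Pi_{ij}\mapsto Q_{ij}$ is injective, and we obtain at least fifteen distinct points of $\cD_2(g)$. On the other hand, $\cD_2(g)\subseteq\Sing(\cD_3(g))=\Sing(\cD_3(f))=\cD_2(f)$, which consists of exactly fifteen points. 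Comparing cardinalities gives $\cD_2(g)=\cD_2(f)$, proving the first item. The only subtle point here is justifying the inclusion $\cD_2(g)\subseteq\Sing(\cD_3(g))$; I would quote \cite{BFP2} for it, as in the Remark above. If one wants to avoid that citation, the dimension bound $\dim\cD_2(g)\le 0$ still gives finiteness, and the counting argument then only needs the injectivity established above.

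For the second item, fix one of the twenty lines $\ell=\ell_{ijk}$. By Lemma \ref{LEM:configuration}, $\ell$ is contained in three of the planes, namely $\Pi_{ij},\Pi_{ik},\Pi_{jk}$. Let $Q_1,Q_2,Q_3$ be the corresponding distinct points of $\cD_2(g)$ with $\iota_g(Q_s)$ equal to these planes. By symmetry of $\Gamma_g$, every $P\in\ell$ satisfies $Q_s\in\iota_g(P)$ for $s=1,2,3$.

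Next we check that these three points are not collinear. If they were, the line $r$ through them would lie in $\iota_g(P)$ for all $P\in\ell$. Symmetrically, $\ell\subseteq\iota_g(Q)$ for all $Q\in r$, so $r\subseteq\cD_3(g)$ and $r$ would be one of the twenty lines. I would then argue that the $Q_s=P_{\cdot}$ arising from the three planes through $\ell_{ijk}$ are $P_{lmn\,k}$-type points which are not collinear, using the explicit labelling. Concretely, the planes $\Pi_{ij},\Pi_{ik},\Pi_{jk}$ correspond by $\iota_f$ to $P_{klmn}$, $P_{jlmn}$, $P_{ilmn}$. However, the labelling for $g$ is not yet known to coincide with that for $f$.

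So instead I argue as follows. Assume $\iota_g(P)\supseteq\langle Q_1,Q_2,Q_3\rangle$ has dimension at least $1$ for every $P\in\ell$; then $\ell\subseteq\cD_3(g)$, which already holds. I want the span to be exactly a line $r$. If the $Q_s$ span a plane $\Sigma$, then $\ell\subseteq\cD_2(g)$, which is a contradiction since $\cD_2(g)$ is finite. Therefore the three points are collinear and span a line $r$ with $r\subseteq\iota_g(P)$ for every $P\in\ell$. Moreover, for general $P\in\ell$ we have $P\notin\cD_2(g)$, so $\iota_g(P)$ is a line and equals $r$. This is exactly the statement that $\ell$ has constant kernel with respect to $\iota_g$.

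I expect the main obstacle to be the first item, namely making sure that the correspondence $\Pi_{ij}\mapsto Q_{ij}$ genuinely produces fifteen distinct points inside the known finite set. Once that is settled, the second item is the short symmetry argument just given.
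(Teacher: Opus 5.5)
Your argument is correct and is essentially the paper's proof. The fifteen planes $\Pi_{ij}$ give fifteen distinct points of $\cD_2(g)$ via Corollary~\ref{COR:gnotastildef}, and the inclusion $\cD_2(g)\subseteq\Sing(\cD_3(g))=\cD_2(f)$ forces equality. For each special line $\ell$, the three points $Q_s$ lie in every $\iota_g(P)$ with $P\in\ell$, and this kernel is a line for general $P$ because $\cD_2(g)$ is finite.

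Two blemishes need fixing. First, your proposed way to avoid citing \cite{BFP2} does not work. Finiteness of $\cD_2(g)$ together with fifteen distinct points neither bounds $\#\cD_2(g)$ by fifteen nor places the $Q_{ij}$ inside $\cD_2(f)$. The inclusion $\cD_2(g)\subseteq\Sing(\cD_3(g))$ is exactly the missing ingredient. Second, delete the paragraph announcing that the three points are not collinear: they must be collinear, and you go on to prove correctly that they are.
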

\begin{proof}
    Since $\cH=\cH_g=\cH_f$, we know that there exist $15$ planes in $\cH_g$ of the form $\Pi_{ij}$: from Corollary \ref{COR:gnotastildef} we know that for each $\Pi_{ij}$ there is a point $P\in\cD_2(g)$ such that $\iota_g(P)=\Pi_{ij}$. Since $\cD_1(g)=\emptyset$ (see Remark \ref{REM:gnotTS}), we have that if $\Pi_{ij}\neq\Pi_{i'j'}$ then also the corresponding points $P$ and $P'$ are different. We have then $15$ distinct points in $\cD_2(g)$. Finally, since $\cD_2(g)\subseteq\Sing(\Sing(\cH))=\cD_2(f)$, which consists of $15$ points, we get the first claim.\\
    Take now one of the $20$ lines and denote it by $\ell$: we know by Lemma \ref{LEM:configuration} that it is contained in three $2$-planes, say $\Lambda_1,\Lambda_2,\Lambda_3$. We know by the above Corollary \ref{COR:gnotastildef} that there exists $Q_1,Q_2,Q_3\in\cD_2(g)$, such that $\Lambda_i=\iota_f(Q_i)$ (we need to distinguish these points or planes with a different notation, since a priori the two correspondences $\iota_f$ and $\iota_g$ are distinct). If we then consider a point $P\in\ell$, by symmetry, we have that $Q_1,Q_2,Q_3\in\iota_g(P)$. Since for $P$ general, $\iota_g(P)$ is a line (indeed we know by Remark \ref{REM:gnotTS} that any component of $\Sing(\cH)$ is not contained in $\cD_2(g)$), we have that $Q_1$, $Q_2$, and $Q_3$ are aligned on a line $r$ and we have $r:=\iota_g(\ell)$, i.e. $\ell$ has constant kernel with respect to $\iota_g$, as claimed.
\end{proof}

\subsection{Step 2}
\label{SUBS:coincideiota}
\ \\
Given $[f]$ and $[g]$ as in Theorem \ref{THM:CO}, with $\cH:=\cH_f=\cH_g$ we have shown so far that
$$\Sing(\cH)=\cD_3(f)=\cD_3(g)=\{20 \mbox{ lines}\}, \quad \cP:=\cD_2(f)=\cD_2(g)=\{15 \mbox{ points}\},$$
$$\cD_1(f)=\cD_1(g)=\emptyset;$$
moreover, each of the $20$ lines in $\Sing(\cH)$ has constant kernel, both with respect to $\iota_f$ and $\iota_g$. However, a priori the two correspondences induced by $\iota_f$ and $\iota_g$ do not coincide on the lines in $\Sing(\cH)$: the aim of this subsection is to show that they actually do.
Let us write the cubic $f$ of Waring Rank $6$, as in the expression \eqref{EQ:WRnormalform}, i.e.
$$f=\sum_{i=0}^4x_i^3+L^3, \quad \mbox{with } L=\sum_{i=0}^4a_ix_i.$$
One can then observe that from this expression of $f$, we get that 
$$\cP=\{P_0,\dots,P_4,Q_{01},\dots Q_{34}\},$$
where, by setting here $\{i,j,k,l,m\}=\{0,1,2,3,4\}$, we have that $P_i=V(x_j,x_k,x_l,x_m)$ (these are the $5$ coordinate points) and $Q_{ij}=V(L,x_k,x_l,x_m)$ (for example, we have $Q_{01}=[-a_1:a_0:0:0:0]$). We will refer to these points $Q_{ij}$ as {\it{mixed points}}. Let us also observe that if $\Pi$ is one of the $15$ $2$-planes defined as the kernels $\iota_f(P)$ with $P\in\cD_2(f)$, then its intersection with $\cP$ either consists of $6$ mixed points or it consists of $3$ mixed points and $3$ coordinate points.\\
For simplicity, we will refer to lines in $\Sing(\cH)$, points in $\cP$, and these $2$-planes as {\it{special}} lines, points and planes respectively.

The key step is the following:
\begin{lemma}
\label{LEM:keyforinvolution}
    Let $r$ be a special line and let $P$ be a special point in $r$. Then the kernel $\iota_g(P)$ does not intersect $r$.
\end{lemma}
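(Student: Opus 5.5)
The plan is to argue by contradiction. The first step is to pin down $\iota_g(P)$ and the possible shape of $\iota_g(P)\cap r$ using only the special configuration, and then to rule out the remaining configuration using the apolar ring $\cA_g$.

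\textbf{Step 1: $\iota_g(P)$ is a special plane.} Since $P\in\cP=\cD_2(g)$ and $\cD_1(g)=\emptyset$, the kernel $\iota_g(P)$ is a $2$-plane. In the proof of Corollary \ref{COR:constantlinesforiotag} the $15$ special planes were matched with $15$ distinct points of $\cD_2(g)=\cP$. This matching is a bijection, so $\iota_g(P)$ is a special plane $\Pi_{ab}$.

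\textbf{Step 2: reduce to a single configuration.} In the model $f=\sum L_i^3$ with the $L_i$ general, a special line $\ell_{ijk}$ and a special plane $\Pi_{ab}$ are in one of three positions:
\begin{itemize}
\item disjoint, if $\{a,b\}\cap\{i,j,k\}=\emptyset$, because five general linear forms have no common zero;
\item meeting exactly in the special point $P_{ijka}$, if exactly one of $a,b$ lies in $\{i,j,k\}$;
\item $r\subset\Pi_{ab}$, if $\{a,b\}\subset\{i,j,k\}$.
\end{itemize}
The containment case is impossible: it would give $P\in r\subset\iota_g(P)$, contradicting smoothness via \eqref{EQ:notdiagonal}. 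So, assuming the claim fails, we get a special point $R\in r$ with $R\neq P$, $R\in\iota_g(P)$, and by symmetry $P\in\iota_g(R)$.

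\textbf{Step 3: bring in the constant kernel of $r$.} Let $s=\iota_g(r)$ be the line given by Corollary \ref{COR:constantlinesforiotag}. By symmetry, every point of $s$ has kernel containing $r$, so $s\subseteq\cD_3(g)$ and $s$ is a special line. Moreover $s\cap r=\emptyset$, since a common point $Q$ would satisfy $Q\in\iota_g(Q)$. The three special points $P,R,T$ on $r$ have kernels that are special planes containing $s$. They are pairwise distinct by the bijection of Step 1. By Lemma \ref{LEM:configuration} there are exactly three special planes through $s$, so these kernels are all of them. In particular
$$\iota_g(P)=J(R,s), \qquad \iota_g(R)=J(P,s).$$

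\textbf{Step 4: the contradiction (main obstacle).} In $\cA_g$, with $p,\rho,\sigma_1,\sigma_2$ representing $P,R$ and two points spanning $s$, Step 3 gives
$$p\rho=0,\qquad p\sigma_i=\rho\sigma_i=0.$$
The goal is to show that a general $Q=p+t\rho\in r$ has kernel strictly larger than $s$, which contradicts $r\not\subseteq\cD_2(g)$ (Remark \ref{REM:gnotTS}). Since $(p+t\rho)(\alpha p+\beta\rho)=\alpha p^2+\beta t\rho^2$, it is enough to prove that $p^2$ and $\rho^2$ are proportional in $A_g^2$.

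By Gorenstein duality, $p^2$ and $\rho^2$ are determined by their pairings with $A^1_g$. Both already vanish against the $3$-space $\langle r,s\rangle$ except for $p^3$ and $\rho^3$, so one only has to compare them on one extra vector $w$.

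To control $p^2w$ and $\rho^2w$, I would use the third special point $T$ and the special line through $P$ and $R$'s partners. Concretely, I would apply the matching of Step 3 to the other special lines through $P$ and through $R$ (four through each point by Lemma \ref{LEM:configuration}). This should give enough extra vanishings of the type $p\cdot x=0$ to force proportionality.

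If that bookkeeping fails, the fallback is to write $g$ in coordinates adapted to $P,R,s$ and the extra vanishings. The aim is then to show that $g$ becomes of TS type, or that $\cD_2(g)$ acquires a curve, either of which contradicts Remark \ref{REM:gnotTS}. This last step is where I expect the real difficulty to lie.
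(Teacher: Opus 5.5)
Your Steps 1--3 are correct and match the opening of the paper's proof. Your $R$ is the paper's $Q=P_1$, and your $s$ is its $\tilde r=\iota_g(P_0)\cap\iota_g(P_1)$. The gap is Step 4, which is the whole content of the lemma. You do not carry it out, and the route you sketch cannot work as stated.

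\emph{The reduction is miscounted.} $p^2$ vanishes on $\rho,\sigma_1,\sigma_2$, while $\rho^2$ vanishes on $p,\sigma_1,\sigma_2$. So $\rho^2=c\,p^2$ forces $0=\rho^2p=c\,p^3$ and $\rho^3=c\,p^2\rho=0$. Proportionality is therefore equivalent to $p^3=\rho^3=0$, i.e.\ $P,R\in V(g)$; agreement on $w$ is then automatic. You never address this real requirement.

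\emph{The extra vanishings you plan to collect do not exist.} Since $\iota_g(P)=J(R,s)$ is a plane, $\{x : px=0\}$ is already the full space $\langle\rho,\sigma_1,\sigma_2\rangle$, so there are no new relations of the type $p\cdot x=0$.

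More seriously, the configuration around $r$ and $s$ does not force either your main target or your fallback. The paper works from exactly your configuration.
\begin{enumerate}
\item It normalizes $P=P_0$, $R=P_1$.
\item Using the special lines through $P_0$ in $\iota_g(P_1)$ and through $P_1$ in $\iota_g(P_0)$, it shows that $s\cap\cP=\{P_2,P_3,Q_{23}\}$.
\item This gives $y_0y_1=0$, $y_iy_j=0$ for $i\in\{0,1\}$, $j\in\{2,3\}$, and $y_2y_3=0$.
\item Via the mixed points $Q_{01}\in r$ and $Q_{23}\in s$ it proves $\iota_g(P_4)=V(x_4,L)$.
\item Integrating, it reaches $g=\sum_k\alpha_kx_k^3+\lambda Lx_4^2$ with $\lambda,\alpha_0,\dots,\alpha_3\neq0$.
\end{enumerate}
In this family the following hold.
\begin{itemize}
\item $p^3=6\alpha_0\neq0$, by smoothness at $P_0$.
\item $H_g(P_0+tP_1)=2\,\diag(3\alpha_0,3\alpha_1t,0,0,\lambda(a_0+a_1t))$ has rank $3$, with kernel exactly $s$.
\item There is no point of rank $\le1$, so $g$ is not cyclic, hence not of TS type for $n=4$.
\item A direct check gives $\cD_2(g)=\{P_0,\dots,P_4\}\cup\{Q_{ij}:0\le i<j\le 3\}$, which is finite.
\end{itemize}
So neither $r\subseteq\cD_2(g)$, nor TS type, nor a curve in $\cD_2(g)$ can be extracted from this data.

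What is missing is the global input $\cD_2(g)=\cD_2(f)=\cP$, used with the explicit coordinates of the mixed points. The contradiction is that $Q_{04}=[-a_4:0:0:0:a_0]$ must have rank $\le2$, yet $H_g(Q_{04})$ has a nonvanishing $3\times3$ minor, proportional to $\lambda^2a_0^2a_1^2\alpha_0a_4$.
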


\begin{proof}
    Let us denote $\Pi_P:=\iota_g(P)\simeq\bP^2$ and assume by contradiction that $\Pi_P\cap r=\{Q\}$ (of course $\Pi_P$ doesn't contain the entire line $r$, otherwise we would have $P\in\iota_g(P)$, contradicting the smoothness of $g$ by property \eqref{EQ:notdiagonal}). Since, by construction, $\Pi_P$ is a special plane, it easily follows that the intersection of $\Pi_P$ with a special line is a special point, i.e. $Q\in\cP$.\\
    Let us now observe that up to a linear projectivity, we can assume w.l.o.g. that $P$ and $Q$ are two coordinate points: let us set $P_0=P$ and $P_1=Q$ and denote the corresponding special planes by $\Pi_{P_0}:=\iota_g(P_0)\simeq\bP^2$ and $\Pi_{P_1}:=\iota_g(P_1)\simeq\bP^2$. Recall that we assumed that $P_1\in\Pi_{P_0}$, hence by symmetry we also that $P_0\in\Pi_{P_1}$. We know by Corollary \ref{COR:constantlinesforiotag} that $r$ has constant kernel with respect to $\iota_g$: let us denote $\tilde{r}=\iota_g(r)$, and observe that by construction $\tilde{r}=\Pi_{P_0}\cap\Pi_{P_1}$. Since $P_0,P_1\in r$ by construction, we clearly have that $P_0,P_1\not\in\tilde{r}$, otherwise, as usual, from property \eqref{EQ:notdiagonal}, we would have a singular point for $V(g)$. \\
    One sees that in $\Pi_{P_1}$ there are two special lines passing through $P_0$ (observe that these are different from $r$, otherwise we would have $P_1\in\iota_g(P_1)$ which is again not possible).
    Let us then take $\ell\subset\Pi_{P_1}$ as one of these two special lines and let us denote by $A$ the intersection $\ell\cap\tilde{r}$. Since $\ell$ is a special line, by Corollary \ref{COR:constantlinesforiotag}, it has constant kernel (with respect to $\iota_g$), denoted by $\tilde{\ell}$. By symmetry, one can observe that $\tilde{\ell}\subset\Pi_{P_0}$ (since $P_0\in\ell$) and $P_1\in\tilde{\ell}$ (since $\ell\subset\Pi_{P_1}$). In other words, we can write $\tilde{\ell}=\bP(\left\langle P_1,B\right\rangle)$, where $B:=\tilde{\ell}\cap\tilde{r}$, clearly distinct from $A$, otherwise we would have $A\in\iota_g(A)$, which is again not possible. Observe that, being intersection of special lines, the points $A$ and $B$ are special points. \\
    In a similar way, let us consider in $\Pi_{P_0}$ the line $\ell':=\bP(\left\langle P_1,A\right\rangle)$: one can easily see that $P_0,B\in\iota_g(P_1)\cap\iota_g(A)$, i.e. $\ell'$ is a special and constant line with kernel $\iota_g(\ell'):=\tilde{\ell'}=\bP(\left\langle P_0,B\right\rangle)$.\\

    \noindent
    {\underline{Claim: $A$ and $B$ are coordinate points}}.
    Before this lemma, we noticed that a special plane either contains $3$ coordinate points or none of them. Since both $\Pi_{P_0}$ and $\Pi_{P_1}$ contain one coordinate point (resp. $P_1$ and $P_0$), we have that there are three coordinate points both in $\Pi_{P_0}$ and in $\Pi_{P_1}$.\\
    Let us focus on $\Pi_{P_0}$: since $P_1\in\Pi_{P_0}$ we have that $\Pi_{P_0}=V(x_i,x_j)$ for $i,j\in\{0,2,3,4\}$ distinct indices. In particular, in $\Pi_{P_0}$, we have $3$ special lines intersecting the locus $\cP$ in one mixed point and two coordinate points and one special line $V=(x_i,x_j,L)$ intersecting $\cP$ in three distinct mixed points.\\   Furthermore, let us observe that a special line with three distinct mixed points is of the form $V(L,x_i,x_j)$ for distinct indices $i,j\in\{0,\dots,4\}$: it is contained in $3$ distinct special planes, but only one of such planes, namely $V(x_i,x_j)$, intersects $\cP$ in three coordinate points and $3$ mixed point. Since, in our case, the line $\tilde{r}$ is contained in two special planes of this type, it has to intersect $\cP$ in one mixed point and two coordinate points. W.l.o.g. (up to projectivity) we can then assume that such two coordinate points are $P_2,P_3$: we have then $\tilde{r}\cap\cP=\{P_2,P_3,Q_{23}\}$, i.e. $\tilde{r}=V(x_0,x_1,x_4)$. To conclude, let us observe that the line $\bP(\left\langle P_1,Q_{23}\right\rangle)$ is not a special line: the special lines in $\Pi_{P_0}$ passing through $P_1$ have to intersect the line $\tilde{r}$ in the coordinate points. In other words, we can assume that $A=P_2$ and $B=P_3$.
   \smallskip

   Now, we are going to use all these information to reconstruct the cubic $g$ and to find a contradiction. Indeed, what we have shown so far allows us to get some vanishings of specific differential operator (i.e. vanishings conditions in the apolar ring $\cA_g$). Recalling the identification $\bP^4\simeq\bP(A^1_g)$ (see Subsection \ref{SUBS:jacobianandapolar}) and seeing then the coordinate points $P_i$ as the elements $[y_i]=[\frac{\partial}{\partial x_i}]\in\bP(A_g^1)$, for example, we have that   
   \begin{equation}
   \label{EQ:vanish1}
   P_0\in\Pi_{P_1}=\iota_g(P_1) \quad \Longrightarrow \quad y_0y_1=0 \ \mbox{in } A^1_g.
   \end{equation}
   In other words, in the expression of $g$ the product $x_0x_1$ can't appear.
   \smallskip
   \ \\
   By construction, in the same way, we get by symmetry that $P_0,P_1,P_3\in\iota_g(P_2)$ and $P_0,P_1,P_2\in\iota_g(P_3)$. Hence, we have
   \begin{equation}
   \label{EQ:vanish2}    
       \frac{\partial^2}{\partial x_0\partial x_2}(g) \ = \ \frac{\partial^2}{\partial x_1\partial x_2}(g) \ = \ \frac{\partial^2}{\partial x_0\partial x_3}(g) \ = \ \frac{\partial^2}{\partial x_1\partial x_3}(g) \ = \ \frac{\partial^2}{\partial x_2\partial x_3}(g) \ = \ 0.
   \end{equation}
   Looking at the other coordinate point, namely $P_4$, let us prove the following:
   \smallskip
   
   {\underline{Claim: $\iota_g(P_4)=V(x_4,L)$}}. Let us start by observing that on the line $r$ we have also the mixed point $Q_{01}$ as point in $\cD_2(g)$. By construction, the special plane $\iota_g(Q_{01})$ contains the line $\tilde{r}$, and so the points $P_2,P_3,Q_{23}$. In particular, since it contains two coordinate points (namely $P_2$ and $P_3$), it contains, as seen above, a third coordinate point. It is easy to see that such a point has to be $P_4$: indeed, if we assume that $P_0\in\iota_g(Q_{01})$ then by symmetry we would get $Q_{01},P_1\in\Pi_{P_0}$, i.e. $P_0\in r\subset\Pi_{P_0}=\iota_g(P_0)$, contradicting the smoothness of $g$ (see again property \eqref{EQ:notdiagonal}). In the same way, we get that $P_1\not\in\iota_g(Q_{01})$. The same happens for the point $Q_{23}\in\tilde{r}$: we have  $$P_4\in\iota_g(Q_{01})\cap\iota_g(Q_{23}).$$
   By symmetry, we get that $\iota_g(P_4)$ contains the points $Q_{01}$ and $Q_{23}$: the only special plane containing these two points is $V(x_4,L)$, as claimed.
   \smallskip

   In particular, we have that \begin{equation}
       \label{EQ:KerP4}
        Q_{ij}\in\iota_g(P_4) \ \mbox{for} \ i\in\{0,1,2\}, \  j\in\{1,2,3\} \ \mbox{and} \ i<j.
    \end{equation}
   Since the point $Q_{ij}$ can be thought as $[-a_jy_i+a_iy_j]\in\bP(A_g^1)$, we have that the above conditions \eqref{EQ:KerP4}, translated into the apolar ring $\cA_g$, give the following relations:
   \begin{equation}
       \label{EQ:vanish3}
   a_j\frac{\partial^2}{\partial x_i\partial x_4}(g)=a_i\frac{\partial^2}{\partial x_j\partial x_4}(g), \qquad \mbox{for }i\in\{0,1,2\}, \ j\in\{1,2,3\}, \ i<j.
   \end{equation}
   \smallskip

   Let us now reconstruct the cubic $g$ (or, better, a family) which doesn't satisfy the properties it has to, getting then a contradiction.\\
   First of all, from the vanishings \eqref{EQ:vanish1} and \eqref{EQ:vanish2}, we can write:
   \begin{equation}
   \label{EQ:firststep}
        g=\sum_{k=0}^4\alpha_kx_k^3+x_4^2\left(\sum_{k=0}^3\beta_kx_k\right)+x_4\left(\sum_{k=0}^3\gamma_kx_k^2\right).
   \end{equation}
   We get
   $$g_4:=\frac{\partial}{\partial x_4}(g)=3\alpha_4x_4^2+2x_4\left(\sum_{k=0}^3\beta_kx_k\right)+\left(\sum_{k=0}^3\gamma_kx_k^2\right).$$
    Then by the relations \eqref{EQ:vanish3}, we get for $0\leq i<j\leq3$
    $$a_i\frac{\partial}{\partial x_j}(g_4) \ = \ a_j\frac{\partial}{\partial x_i}(g_4)\longrightarrow a_i(2\beta_j x_4+2\gamma_j x_j) \ = \ a_j(2\beta_i x_4+2\gamma_i x_i).$$
    Hence, since by assumptions the coefficients $a_i$ are all different from zero (otherwise $f$ would be of TS type) we get that 
    $$\gamma_k=0 \mbox{ for all } k\in\{0,\dots,3\} \mbox{ and } a_i\beta_j=a_j\beta_i \mbox{ for } 0\leq i < j\leq 3.$$
    Then, one sees that if there exists an index $m$ such that $\beta_m=0$, then $\beta_k=0$ for all $k\in\{0,\dots,3\}$. In such a case, since also $\gamma_k=0$ for each $k$, we would get $g=\sum_{k=0}^4\alpha_kx_k^3$, i.e. $g$ would be of Thom-Sebastiani type, against our assumptions. Hence, each of the coefficients $\beta_i$ is not zero. Thus, one can easily see that 
    $$\exists \lambda\in\bC\setminus\{0\} \mbox{ such that } \beta_k=\lambda a_k \mbox{ for each } k\in\{0,\dots,3\}.$$ 
    By replacing $\alpha_4$ with $\alpha_4+\lambda a_4$, we can then write
    \begin{equation}
        \label{EQ:finalstep}
        g=\sum_{k=0}^4\alpha_k x_k^3+\lambda Lx_4^2 \quad \mbox{with } L=\sum_{k=0}^4a_kx_k.
    \end{equation}
    Observe now that $\alpha_k\neq0$ for all $k\in\{0,\dots,3\}$: indeed, if $\alpha_i=0$ for some index $i\in\{0,\dots,3\}$ then the coordinate point $P_i$ would be singular for $V(g)$, which is not possible, since we are assuming $V(g)$ to be smooth.
    \smallskip
    
    For $g$ defined as in Equation \eqref{EQ:finalstep}, we can then write the associated Hessian matrix
    $$H_g=2\begin{pmatrix}3\alpha_0x_0&0&0&0&\lambda a_0x_4\\0&3\alpha_1x_1&0&0&\lambda a_1x_4\\0&0&3\alpha_2x_2&0&\lambda a_2x_4\\
    0&0&0&3\alpha_3x_3&\lambda a_3x_4\\\lambda a_0x_4&\lambda a_1x_4&\lambda a_2x_4&\lambda a_3x_4&(\lambda L+3\alpha_4x_4+2\lambda a_4x_4)\end{pmatrix}.$$
    To conclude, since we proved that $\cD_2(g)=\cD_2(f)=\{P_0,\dots,P_4,Q_{01},\dots,Q_{34}\}$, we should have that $\Rank(H_g(Q_{04}))\leq 2$. Since $Q_{04}=[-a_4:0:0:0:a_0]$, we have
    $$H_g(Q_{04})=2\begin{pmatrix}-3\alpha_0a_4&0&0&0&\lambda a_0^2\\0&0&0&0&\lambda a_1a_0\\0&0&0&0&\lambda a_2a_0\\
    0&0&0&0&\lambda a_3a_0\\\lambda a_0^2&\lambda a_1a_0&\lambda a_2a_0&\lambda a_3a_0&a_0(3\alpha_4+2\lambda a_4)\end{pmatrix}.$$
    Consider, for example, the $(3\times3)$-minor
    $$\begin{vmatrix}
        -3\alpha_0 a_4&0&\lambda a_0^2\\0&0&\lambda a_1a_0\\\lambda a_0^2&\lambda a_1a_0&a_0(3\alpha_4+2\lambda a_4)
    \end{vmatrix}=
    3\lambda^2a_0^2a_1^2\alpha_0a_4.$$
    Since, by assumption $a_i\neq0$ for every $i=0,\dots,4$ and we have shown that also $\lambda\neq0$ and $\alpha_j\neq0$ for $j=0,\dots,3$, we have that this minor does not vanish, getting a contradiction, and concluding the proof. 
\end{proof}

From the above lemma \ref{LEM:keyforinvolution}, we finally get the following:

\begin{proposition}
\label{PROP:sameinvolution}
    Given $f$ and $g$ as in Theorem \ref{THM:CO}, we have that 
    $$\iota_f=\iota_g \qquad \mbox{on }\Sing(\cH).$$
\end{proposition}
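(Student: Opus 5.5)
The plan is to pin down $\iota_g$ first on the $15$ special points and then on the $20$ special lines, using only Lemma \ref{LEM:keyforinvolution} and incidence combinatorics. By Corollaries \ref{COR:gnotastildef} and \ref{COR:constantlinesforiotag}, the map $P\mapsto\iota_g(P)$ is a bijection between $\cP=\cD_2(g)$ and the $15$ special planes $\Pi_{ab}=V(L_a,L_b)$. Moreover, $\iota_g$ is constant on each special line.

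\emph{Step 1 (special points).} Fix $P=P_{ijkl}$ with $\{i,j,k,l,m,n\}=\{0,\dots,5\}$. By Lemma \ref{LEM:configuration}, the four special lines through $P$ are $\ell_{ijk},\ell_{ijl},\ell_{ikl},\ell_{jkl}$. By Lemma \ref{LEM:keyforinvolution}, the special plane $\iota_g(P)=\Pi_{ab}$ meets none of these four lines.

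A line $\ell_{pqr}$ and a plane $\Pi_{ab}$ in $\bP^4$ are disjoint exactly when the five linear forms $L_a,L_b,L_p,L_q,L_r$ have no common zero. Since $f$ is general in $\cW_6$ (all $a_i\neq0$ in the normal form \eqref{EQ:WRnormalform}), any five of the six $L_i$'s are linearly independent. Hence those five forms have no common zero if and only if they are five distinct forms, i.e. $\{a,b\}\cap\{p,q,r\}=\emptyset$.

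Requiring this for all four lines through $P$ forces $\{a,b\}\cap\{i,j,k,l\}=\emptyset$, so $\{a,b\}=\{m,n\}$. Therefore $\iota_g(P_{ijkl})=\Pi_{mn}=\iota_f(P_{ijkl})$ by Lemma \ref{LEM:iotaf}.

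\emph{Step 2 (special lines).} Take $\ell=\ell_{ijk}$. Its three special points are $P_{ijkl},P_{ijkm},P_{ijkn}$. By symmetry, the constant kernel $\iota_g(\ell)$ is contained in $\iota_g$ of each of these points, that is, in $\Pi_{mn}\cap\Pi_{ln}\cap\Pi_{lm}$. This intersection is $\ell_{lmn}$, since $L_l,L_m,L_n$ are independent. As $\iota_g(\ell)$ is a line, it equals $\ell_{lmn}=\iota_f(\ell_{ijk})$.

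Combining the two steps, $\iota_f$ and $\iota_g$ agree at the general point of every special line, because both give the constant line there. They also agree at every special point, so $\iota_f=\iota_g$ on $\Sing(\cH)$.

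The main obstacle has already been overcome in Lemma \ref{LEM:keyforinvolution}. What remains here is to check the incidence criterion carefully, namely that the general-position condition on the six forms $L_i$ makes ``disjoint'' equivalent to ``disjoint index sets''.
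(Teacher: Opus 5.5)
Your proof is correct and uses the same ingredients as the paper: Lemma \ref{LEM:keyforinvolution}, together with the fact that a special plane $\Pi_{ab}$ misses a special line $\ell_{pqr}$ exactly when $\{a,b\}\cap\{p,q,r\}=\emptyset$ (because any five of the $L_i$ are independent). The only difference is the order of deduction. The paper first identifies $\iota_g(\ell)$ as the intersection of the three special planes missing $\ell$, and then recovers $\iota_g(P)$ from the lines through $P$. You instead first pin down $\iota_g(P_{ijkl})=\Pi_{mn}$ from the four lines through the point, and then intersect to obtain the line kernels.
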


\begin{proof}
    We know that the $20$ special lines, components of $\Sing(\cH)$ have constant kernels with respect to $\iota_f$ and by Corollary \ref{COR:constantlinesforiotag} also with respect to $\iota_g$. This means that given a special line $\ell$, there exist two special lines $\ell_f$ and $\ell_g$, such that $\iota_f(\ell)=\ell_f$ and $\iota_g(\ell)=\ell_g$. To prove the Proposition, it is then enough to show that $\ell_f=\ell_g$. Indeed, let $P$ be a point in $\cD$, since it belongs to three different special lines, say $\ell_1,\ell_2,\ell_3$ then we would have that $\iota_f(P)$ is the special plane containing $\iota_f(\ell_i)$ for $i=1,2,3$ and $\iota_g(P)$ is the special plane containing $\iota_g(\ell_i)$ for $i=1,2,3$. If we prove that $\iota_f(\ell_i)=\iota_g(\ell_i)$ for each index $i$, then it follows that $\iota_f(P)=\iota_g(P)$.\\
    Let $\ell$ be a special line and $P_1,P_2,P_3$ the $3$ special points on $\ell$, since $\ell$ has constant kernel with respect to both the correspondence, we can write
    $$\ell_g\subset\iota_g(P_i) \ \forall i=1,2,3 \quad \mbox{i.e.} \ \ell_g=\iota_g(P_1)\cap\iota_g(P_2)\cap\iota_g(P_3)$$
    and the same with $\ell_f$ and $\iota_f$.
    From the description given for $\iota_f$ (see Lemma \ref{LEM:iotaf}), we have that, in this situation, $\iota_f(P_i)\cap \ell=\emptyset$ for all $i=1,2,3$ and, by Lemma \ref{LEM:keyforinvolution}, the same happens with respect to $\iota_g$, i.e.
    $$\iota_g(P_i)\cap \ell=\emptyset.$$
    It is then enough to observe that among the $15$ special planes only $3$ of them, namely $\Pi_1,\Pi_2,\Pi_3$ do not intersect the fixed line $\ell$: both the kernel $\ell_f$ and $\ell_g$ have to coincide with the intersection $\Pi_1\cap\Pi_2\cap\Pi_3$, proving the proposition.
\end{proof}

\subsection{Step 3}
\label{SUBS:againWR}
\ \\
Let us now complete the proof of Theorem \ref{THM:CO}. In the previous subsection, with Proposition \ref{PROP:sameinvolution}, we proved that the two correspondences $\iota_f$ and $\iota_g$ act in the same exact way on $\Sing(\cH)$: let us denote by $\iota$ this correspondence. In particular, for $f$ written as in the expression \eqref{EQ:generalWR}, we have that
$$\iota(V(L_i,L_j,L_k))=V(L_l,L_m,L_n) \quad \mbox{and} \quad \iota(V(L_i,L_j,L_k,L_l))=V(L_m,L_n),$$
where $\{i,j,k,l,m,n\}=\{0,1,2,3,4,5\}$.\\
Here, we exploit such a property to reconstruct the cubic $[g]\in\bP(S^3)$ and to prove the following:

\begin{lemma}
    \label{LEM:gisWR}
    Let $[f]$ and $[g]$ be as in Theorem \ref{THM:CO}. Then 
    $$[g] \ \mbox{itself belongs to } \cW_6.$$
\end{lemma}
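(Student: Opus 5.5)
We use that $\iota_g$ coincides with the correspondence $\iota$ on $\Sing(\cH)$ (Proposition \ref{PROP:sameinvolution}) to write down many vanishing conditions in the apolar ring $\cA_g$. These force $g$ into a small explicit family, and we then show the family lies in $\cW_6$. The computation runs in the normal form \eqref{EQ:WRnormalform} of $f$, under the identification $\bP^4\simeq\bP(A^1_g)$, exactly as in the proof of Lemma \ref{LEM:keyforinvolution}.

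\textbf{Step 1: conditions from the coordinate points.} We have $\iota_g(P_i)=\iota(P_i)=V(x_i,L)$ for each coordinate point $P_i$. This plane contains the coordinate points $P_j$ with $j\neq i$, so $y_iy_j=0$ in $\cA_g$ for all $i\neq j$. It follows that $g$ has no monomial involving two distinct variables, i.e. $g=\sum_{k=0}^4\alpha_kx_k^3$, a Fermat-type form. That would make $g$ of TS type, contradicting Remark \ref{REM:gnotTS}.

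So this cannot be right as stated, and we must double-check the description of $\iota$. From Lemma \ref{LEM:iotaf}, $\iota(P_{ijkl})=\Pi_{mn}$. With $L_5=L$, the coordinate point $P_0=V(x_1,x_2,x_3,x_4)$ gives $\iota(P_0)=V(x_0,L)$. This plane is spanned by the mixed points $Q_{jk}$ with $j,k\geq1$, \emph{not} by the coordinate points $P_j$. Indeed, $P_j\in V(x_0,L)$ would require $a_j=0$. The correct conditions are therefore
\begin{equation*}
y_i\cdot(a_ky_j-a_jy_k)=0 \quad \mbox{in } \cA_g \quad \mbox{for all distinct } i,j,k\in\{0,\dots,4\}.
\end{equation*}
In terms of $g$, this says $a_k\,\partial_i\partial_jg=a_j\,\partial_i\partial_kg$. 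For fixed $i$, the vector $(\partial_i\partial_jg)_{j\neq i}$ is proportional to $(a_j)_{j\neq i}$.

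\textbf{Step 2: solve the linear system.} Write $g=\sum c_{pqr}x_px_qx_r$ and translate the conditions into the coefficients. For distinct $i,j,k$, the relation forces $c_{ijk}$-type terms to satisfy $a_k\,\partial_i\partial_j g=a_j\,\partial_i\partial_k g$ as linear forms. Comparing coefficients, one expects $\partial_i\partial_jg=\mu\, a_ia_j L$ for a constant $\mu$ independent of $i,j$, using symmetry in $i,j$ and all $a_i\neq0$. This makes $g-\mu L^3$ have vanishing mixed second derivatives. Hence
\begin{equation*}
g=\sum_{k=0}^4\alpha_kx_k^3+\mu L^3 .
\end{equation*}

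\textbf{Step 3: conclude.} All $\alpha_k\neq0$ and $\mu\neq0$: otherwise $g$ is singular at a coordinate point or is of TS type, which Remark \ref{REM:gnotTS} and smoothness exclude. Rescaling the variables then shows $g$ has Waring rank $6$, so $[g]\in\cW_6$.

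The main obstacle is Step 2. The remaining cases of the relation are those where some of $j,k$ equal $i$; they yield conditions of the shape $x_i^2$-terms versus mixed terms, and we must check that the combinatorics of all $i,j,k$ genuinely forces a single constant $\mu$. We would first do this in the $n=4$ normal form by writing $\partial_i\partial_jg$ as a general linear form, deriving the proportionalities, and then verifying the consistency across different $i$ using $\partial_i\partial_jg=\partial_j\partial_ig$. We might also use $\iota_g=\iota$ on the special lines to supply any missing relation.
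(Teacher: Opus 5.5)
Your corrected argument follows the paper's route. The paper also uses $Q_{jk}\in\iota_g(P_i)=V(x_i,L)$ to get $a_k(A_i)^j=a_j(A_i)^k$ for pairwise distinct $i,j,k$; this is your relation $a_k\,\partial_i\partial_jg=a_j\,\partial_i\partial_kg$, read coefficient by coefficient. It then combines this with the symmetry of third derivatives to get $\partial_i\partial_j\partial_kg=\lambda a_ia_ja_k$ whenever $i,j,k$ are not all equal, and concludes $6g=6\sum b_ix_i^3+L^3$.

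The gap is that Step 2, which carries all the content, is left as ``one expects''. Your worry about ``remaining cases where $j$ or $k$ equals $i$'' is misplaced. The relation only ever involves pairwise distinct indices, since $Q_{jk}\in V(x_i,L)$ exactly when $i\notin\{j,k\}$. The information on the $x_i^2x_j$ terms is already contained in the coefficients of the linear forms $\partial_i\partial_jg$. The step closes as follows.
\begin{itemize}
\item The relation gives $\partial_i\partial_jg=a_jM_i$ for all $j\neq i$, with $M_i$ a linear form.
\item The symmetry $\partial_i\partial_jg=\partial_j\partial_ig$ gives $a_jM_i=a_iM_j$. So $N:=M_i/a_i$ is independent of $i$, and $\partial_i\partial_jg=a_ia_jN$ for all $i\neq j$.
\item Write $N=\sum n_mx_m$. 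For $i,j,k$ distinct, compare $\partial_k(\partial_i\partial_jg)=a_ia_jn_k$ with $\partial_i(\partial_j\partial_kg)=a_ja_kn_i$. This gives $a_in_k=a_kn_i$, hence $N=\mu L$.
\end{itemize}
The last step needs the full symmetry of the third derivatives, which is the paper's fact \eqref{EQ:schwartz}; $\partial_i\partial_j=\partial_j\partial_i$ alone only gives that $N$ is independent of $i$. Since $\partial_i\partial_jL^3=6a_ia_jL$, the cubic $g-\frac{\mu}{6}L^3$ has no mixed monomials. This is a slightly cleaner finish than the paper's two integrations, and the $1/6$ can be absorbed into $\mu$.

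Two small corrections to Step 3.
\begin{itemize}
\item $\mu\neq0$ holds because otherwise $g$ is Fermat, hence of TS type.
\item $\alpha_k=0$ does not produce a singular coordinate point: for $m\neq k$ one has $\partial_mg(P_k)=3\mu a_ma_k^2\neq0$. Instead, $g$ becomes a Fermat cubic in the independent forms $\{x_j\}_{j\neq k}\cup\{L\}$, or a cone if two of the $\alpha$'s vanish. Either case is excluded by Remark \ref{REM:gnotTS} or by smoothness.
\end{itemize}
The first paragraph of Step 1 should simply be deleted.
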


\begin{proof}
    For the proof, let us consider $f$ as in the expression \eqref{EQ:WRnormalform}.\\
    First of all, observe that since the entries of the Hessian matrix of $g$ are linear forms with variables $x_0,\dots,x_4$, it can be written as 
    $$H_g=\sum_{i=0}^4x_iA_i,$$
    where $A_i\in\mathrm{Mat}(5,\bK)$ are square symmetric matrices with coefficient in the field $\bK$. By construction, we then have that $H_g(P_i)=A_i$ and so $H_g(P_i)\cdot P_j=(A_i)^j$, i.e. the $j$-th column of the matrix $A_i$, where $P_i$ is, as usual, the $i$-th coordinate point.\\
    Considering the kernel map $\iota$, we know, for example, that 
    \begin{equation}
        \label{EQ:cond1}
        \iota(P_i)=V(L,x_i)\ni Q_{jk} \ \mbox{for } j,k\in\{0,\dots,4\}\setminus \{i\}.
    \end{equation}
    From this, one easily gets (with the usual numeration from $0$ to $4$)
    \begin{equation}
        \label{EQ:relationcoeff1}
        a_k(A_i)^j=a_j(A_i)^k \qquad \mbox{for all } i,j,k \mbox{ pairwise distinct}.
    \end{equation}
    
    Furthermore, denoting by $(A)_k$ the $k$-th row of the matrix $A$, let us now observe that the element $(A_i)^j_k$ coincides with the coefficient of $x_i$ in the second partial derivatives $\frac{\partial^2f}{\partial x_j\partial x_k}$, i.e.
    $$(A_i)^j_k=\frac{\partial^3f}{\partial x_i\partial x_j\partial x_k}.$$
    From this and the Schwartz' relation, we get that 
    \begin{equation}
        \label{EQ:schwartz}
        (A_i)^j_k\in \bK \quad \mbox{is fixed by any permutation of the pairwise distinct indices }i,j,k.
    \end{equation}
    Observe that there exists an index $\alpha$ such that the matrix $A_{\alpha}$ has an entry different from zero: there exist distinct indices $\beta,\gamma\in\{0,\dots,4\}\setminus\{\alpha\}$ such that $(A_{\alpha})^{\beta}_{\gamma}\neq 0$. We can then take a value $\lambda\in\bK^*$ which allows us to write
    \begin{equation}
        \label{EQ:startingcoeff}
        (A_{\alpha})^{\beta}_{\gamma}=\lambda a_{\alpha}a_{\beta}a_{\gamma}, 
    \end{equation}
    where the $a_i$'s are the coefficients in the expression \eqref{EQ:WRnormalform} of $f$ (recall that by assumption $a_i\neq0$ for all i.\\
    From this, using relation \eqref{EQ:relationcoeff1} and the fact \eqref{EQ:schwartz}, one can obtain all the coefficients $(A_i)^j_k$ for any triple of indices $i,j,k\in\{0,\dots,5\}$ pairwise distinct. \\
    Again, by \eqref{EQ:relationcoeff1}, for distinct indices $i,j$, we have
    $$(A_i)^j_j=\frac{a_j}{a_k}(A_i)^k_j=\lambda a_ia_j^2.$$
    Hence, one gets all the coefficient $(A_i)^j_k$ with at most two indices among $\{i,j,k\}$ equal. \\
    Summarizing, the coefficients of the matrices $A_i$ can be written as
    \begin{equation}
        \label{EQ:laststep}
        (A_i)^j_k=\lambda a_ia_ja_k \mbox{ for a fixed }\lambda\in\bK^* \mbox{and }i,j,k\in\{0,\dots,4\} \mbox{ not all the same}.
    \end{equation}
    For example, we can write
    $$A_0=\lambda\begin{pmatrix}
        *&a_0^2a_1& a_0^2a_2&a_0^2a_3&a_0^2a_4\\
        a_0^2a_1&a_0a_1^2& a_0a_1a_2&a_0a_1a_3&a_0a_1a_4\\
        a_0^2a_2&a_0a_1a_2& a_0a_2^2&a_0a_2a_3&a_0a_2a_4\\
        a_0^2a_3&a_0a_1a_3& a_0a_2a_3&a_0a_3^2&a_0a_3a_4\\
        a_0^2a_4&a_0a_1a_4& a_0a_2a_4&a_0a_3a_4&a_0a_4^2\\
    \end{pmatrix},$$
    and in an analogous way for the other matrices $A_i$. Hence, there exist coefficient $b_i\in\bK$ for $i=0,\dots,4$, such that 
    \begin{equation}
        \label{EQ:Hessg}
        H_g=\lambda\left(\begin{pmatrix}            6b_0x_0&0&0&0&0\\0&6b_1x_1&0&0&0\\0&0&6b_2x_2&0&0\\0&0&0&6b_3x_3&0\\0&0&0&0&6b_4x_4
        \end{pmatrix}
        +\underbrace{\sum_{i=0}^4a_ix_i}_{L}(\underline{a}^T\underline{a})\right),
    \end{equation}
    where $\underline{a}=(a_0,\dots,a_4)$ and the $a_i$'s are, as above, the coefficients of $L$ in the expression of $f$.
    Since the coefficient $\lambda$ does not play any particular role, in what follows, we can assume w.l.o.g. $\lambda=1$. Let us now reconstruct the cubic polynomial $g$, in the most obvious way. First of all let us consider the second partial derivative $\frac{\partial ^2 g}{\partial x_0^2}$:
    $$\frac{\partial ^2 g}{\partial x_0^2}=(H_g)^0_0=6b_0x_0+a_0^2\sum_{i=0}^4a_ix_i.$$
    Then integrating with respect to $x_0$, we get
    \begin{equation}
        \label{EQ:firstpartialx0incognite}
        \frac{\partial g}{\partial x_0}=3b_0x_0^2+\frac{a_0^3}{2}x_0^2+a_0^2a_1x_0x_1+a_0^2a_2x_0x_2+a_0^2a_3x_0x_3+a_0^2a_4x_0x_4+\sum_{i,j\in\{1,\dots,4\}, \ i\leq j}\alpha_{ij}x_ix_j.
    \end{equation}
    Let us now compute the coefficients $\alpha_{ij}$ in the relation \eqref{EQ:firstpartialx0incognite}. For example, we get
    $$\mbox{by \eqref{EQ:firstpartialx0incognite}}: \frac{\partial^2g}{\partial x_0\partial x_1}=a_0^2a_1x_0+2\alpha_{11}x_1+\alpha_{12}x_2+\alpha_{13}x_3+\alpha_{14}x_4$$
    $$\mbox{by \eqref{EQ:Hessg}}: \frac{\partial^2g}{\partial x_0\partial x_1}=(H_g)^0_1=a_0a_1\cdot L=a_0^2a_1x_0+a_0a_1^2x_1+a_0a_1a_2x_2+a_0a_1a_3x_3+a_0a_1a_4x_4.$$
    Thus, we find
    $$\alpha_{11}=\frac{a_0a_1^2}{2}, \quad \alpha_{12}=a_0a_1a_2, \quad \alpha_{13}=a_0a_1a_3, \quad \alpha_{14}=a_0a_1a_4.$$
    In the same way, one obtains all the parameters $\alpha_{ij}$ appearing in Equation \eqref{EQ:firstpartialx0incognite} by using the other second partial derivatives. Then, one can integrate again with respect to $x_0$ and with the same procedure obtain the cubic $g$, as follows:

    $$g=\sum_{i=0}^4b_ix_i^3+\sum_{i=0}^4\frac{a_i^3}{6}x_i^3+\sum_{i,j\in\{0,\dots,4\}, \ i\neq j}\frac{a_i^2a_j}{2}x_i^2x_j+\sum_{i,j,k\in\{0,\dots,4\}, \ i\leq j \leq k}a_ia_ja_kx_ix_jx_k.$$
    One can then see that $g$ is itself of Waring Rank $6$, i.e. an element of $\cW_6$, since we can write:
    $$6g=6\sum_{i=0}^4b_ix_i^3+(a_0x_0+a_1x_1+a_2x_2+a_3x_3+a_4x_4)^3.$$    
\end{proof}

As said above, by using now Theorem \ref{THM:WRn+2}, the proof of Theorem \ref{THM:CO} is completed.

\section{Hessians and cones}
\label{SEC:hessiancones}
In this last section, we prove Theorem D from the Introduction:

\begin{theorem}
\label{THM:hessianinjacobians}
    Let $[f]\in\bP(S^d_n)$ be a homogeneous polynomial of degree $d\geq3$ in $n+1$ variables, with $n\geq1$. It holds
    $$x_i\partial_{x_j}h_f - (d-2)\delta_{ij}h_f \in (J_f)^{(n+1)(d-2)},$$
    where $\delta_{ij}$ is the Kronecker symbol.
\end{theorem}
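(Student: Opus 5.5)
The plan is a direct computation with the adjugate of the Hessian matrix, using Euler's formula together with a congruence modulo $J_f$ that follows from Euler's formula for the first partials. Write $f_k=\partial f/\partial x_k$, $f_{kl}=\partial^2 f/\partial x_k\partial x_l$, $f_{klj}=\partial^3 f/\partial x_k\partial x_l\partial x_j$. Let $C=(C_{kl})$ be the matrix of cofactors of $H_f$, so that $C\cdot H_f=H_f\cdot C=h_f\cdot I$. Since $H_f$ is symmetric, $C$ is symmetric.

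\textbf{Step 1 (derivative of the determinant).} By Jacobi's formula,
$$\partial_{x_j}h_f=\sum_{k,l}C_{kl}\,f_{klj},\qquad\text{hence}\qquad x_i\,\partial_{x_j}h_f=\sum_{k,l}x_i\,C_{kl}\,f_{klj}.$$

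\textbf{Step 2 (the key congruence).} Euler's formula applied to $f_k$, which has degree $d-1$, gives $\sum_m x_m f_{km}=(d-1)f_k\in J_f$. So the vector $H_f\cdot x$ has all its entries in $J_f$: $x$ is a kernel vector of $H_f$ modulo $J_f$. I want to deduce the Cramer-type congruence
$$x_i\,C_{kl}\equiv x_l\,C_{ki}\pmod{J_f}\qquad\text{for all }i,k,l.$$
To get it, I will take the cofactor expressions for $C_{kl}$ and $C_{ki}$ as signed $n\times n$ minors obtained by deleting row $k$. In the minor defining $C_{kl}$, which contains column $i$ (for $i\neq l$), I replace column $i$ by the column $\sum_m x_m H_f^m$ restricted to those rows. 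This new column has entries in $J_f$, so the modified determinant lies in $J_f$. Expanding it by multilinearity, only two terms survive: the $m=i$ term gives $x_i C_{kl}$, and the $m=l$ term gives $\mp x_l C_{ki}$ once the column is moved into position; all other terms have a repeated column and vanish. Fixing the sign bookkeeping so that this is exactly the displayed identity is, I expect, the only fiddly point, and I would check it first on a $2\times 2$ or $3\times 3$ example. The case $i=l$ is trivial.

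\textbf{Step 3 (conclusion).} Substituting Step 2 into Step 1 gives
$$x_i\,\partial_{x_j}h_f\equiv\sum_{k}C_{ki}\sum_l x_l f_{klj}\pmod{J_f}.$$
Euler's formula for $f_{kj}$, which has degree $d-2$, gives $\sum_l x_l f_{klj}=(d-2)f_{kj}$. The right-hand side therefore equals $(d-2)\sum_k C_{ik}f_{kj}=(d-2)(C H_f)_{ij}=(d-2)\delta_{ij}h_f$, using the symmetry of $C$. Hence $x_i\partial_{x_j}h_f-(d-2)\delta_{ij}h_f\in J_f$. Since both terms are homogeneous of degree $(n+1)(d-2)$, the difference lies in the degree-$(n+1)(d-2)$ part $(J_f)^{(n+1)(d-2)}$, as claimed.

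No smoothness hypothesis is used in this argument. Smoothness enters only afterwards, through Proposition \ref{PROP:soclejacobian}, to derive the non-cone statement.
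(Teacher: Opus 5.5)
Your proof is correct, including the sign in Step 2. Moving column $l$ into its natural slot costs $(-1)^{|l-i|-1}$, and together with the cofactor signs the modified minor equals $(-1)^{k+l}(x_iC_{kl}-x_lC_{ki})$, as it must: for a symmetric $H$ with $Hv=0$ the adjugate is a multiple of $vv^T$, so $v_iC_{kl}=v_lC_{ki}$ holds exactly.

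Your argument is organized differently from the paper's. The paper establishes an exact identity before differentiating. It multiplies row $0$ of $H_f$ by $x_0$, adds $\sum_{k\geq1}x_kr_k$, and uses Euler to get $x_0h_f=(d-1)\det B$, where $B$ is $H_f$ with its first row replaced by $\nabla f$; equivalently $x_0h_f=(d-1)\sum_kC_{0k}f_k$. It then differentiates $\det B$ row by row. Every term that keeps the gradient row lies in $J_f$. The term where the gradient row is differentiated has a repeated row if $j\neq0$ and equals $h_f$ if $j=0$. For $j=0$ the product rule on $\partial_{x_0}(x_0h_f)$ supplies an extra $-h_f$, so $(d-2)$ arises as $(d-1)-1$.

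You instead differentiate first via Jacobi's formula and reduce modulo $J_f$ afterwards. This costs a second use of Euler, on the $f_{kj}$, which produces $(d-2)$ directly, plus a sign computation. The term $\delta_{ij}h_f$ then comes from $CH_f=h_fI$.

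The paper's route is shorter and sign-free, needing one Euler identity and a repeated-row argument. Yours handles all pairs $(i,j)$ at once without relabelling to $i=0$. It also isolates a more structural intermediate fact: modulo $J_f$, the adjugate of $H_f$ satisfies the rank-one relations $x_iC_{kl}\equiv x_lC_{ki}$, exactly as the adjugate of a matrix with kernel spanned by $x$ would. As you note, neither argument uses smoothness.
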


\begin{proof}
    Let us fix, for example, $i=0$. By denoting by $f_i$ the first partial derivative of $f$ with respect to $x_i$ and by $f_{ij}$ the second partial derivative of $f$ with respect to $x_i$ and $x_j$, we have
    $$x_0h_f=\det\begin{pmatrix}
        x_0f_{00}&x_0f_{01}&\cdots&x_0f_{0n}\\f_{01}&f_{12}&\cdots&f_{1n}\\\vdots&\vdots&\cdots&\vdots\\f_{0n}&f_{1n}&\cdots&f_{nn}
    \end{pmatrix}$$
    Denoting by $r_i$ the $i$-th row of such a matrix for $i=0,\cdots,n$, let us substitute $r_0$ with $r_0+\sum_{i\geq1}x_ir_i$. Since by the Euler differential Identity we have $(d-1)f_i=\sum_{i=0}^nx_if_{ij}$, we get
    $$x_0h_f=\det\begin{pmatrix}
        (d-1)f_0&(d-1)f_1&\cdots&(d-1)f_n\\f_{01}&f_{12}&\cdots&f_{1n}\\\vdots&\vdots&\cdots&\vdots\\f_{0n}&f_{1n}&\cdots&f_{nn}
    \end{pmatrix}.$$
    Hence, we have 
    $$x_0h_f=(d-1)\det(B), \quad \mbox{where} \quad B=\begin{pmatrix}
        f_0&f_1&\cdots&f_n\\f_{01}&f_{12}&\cdots&f_{1n}\\\vdots&\vdots&\cdots&\vdots\\f_{0n}&f_{1n}&\cdots&f_{nn}
    \end{pmatrix}.$$
    Let us consider the case where $j\neq i$: from the above, we have 
    $$x_0\partial_{x_j}h_f=(d-1)\partial_{x_j}\det(B)$$
    and $\partial_{x_j}\det(B)=\sum_{k=0}^n\det(B_k)$, where $B_k$ denotes the matrix $B$ with the $k$-th row derived by $x_j$. One then notices that if $k=0$, two rows of $B_0$ coincide, namely the $0$-th row and the $j$-th row, so its determinant vanishes. For $k>0$, considering the Laplace expansion with respect to the $0$-th row, one easily sees that the determinant $\det(B_k)$ lives in the Jacobian ideal of $f$. Hence, we have proved that, for $i\neq j$
    $$x_i\partial_{x_j}h_f\in (J_f)^{(n+1)(d-2)}.$$
    Let us finally consider the case where $j=0$: let us prove that $x_0\partial_{x_0}h_f=(d-2)h_f+\alpha$, where $\alpha\in (J_f)^{(n+1)(d-2)}$.
    From above, we have $x_0h_f=(d-1)\det(B)$, and so, deriving with respect to $x_0$, we get:
    $$h_f+x_0\partial_{x_0}h_f=(d-1)\partial_{x_0}(\det B) \quad \rightarrow \ x_0\partial_{x_0}h_f=(d-1)\partial_{x_0}(\det B)-h_f.$$
    Moreover, reasoning as before one sees that $\partial_{x_0}(\det(B))=h+\alpha$, with $\alpha\in(J_f)^{(n+1)(d-2)}$: we get the claim.
\end{proof}

From this, one can easily get the following:
\begin{corollary}
\label{COR:conclusioncones}
    Let $[f]\in\bP(S^d_n)$ be a homogeneous polynomial of degree $d\geq3$ in $n+1$ variables, with $n\geq1$. If the associated hypersurface $V(f)\subset\bP^n$ is smooth, then $\cH_f$ is not a cone.
\end{corollary}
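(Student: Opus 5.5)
We argue by contradiction, combining Theorem \ref{THM:hessianinjacobians} with Proposition \ref{PROP:soclejacobian}. Suppose $\cH_f$ is a cone with vertex containing a point $[v]\in\bP^n$. After a linear change of coordinates we may take $v=P_0=[1:0:\dots:0]$. Smoothness of $V(f)$, the Jacobian ring $R_f$, its socle degree, and the hessian polynomial all transform compatibly under $GL_{n+1}$: $h_f$ changes by the nonzero scalar $\det(A)^2$ and $J_f$ is carried to $J_{f\circ A}$. Hence nothing is lost by this normalization. That $\cH_f$ is a cone with vertex $P_0$ means that $h_f$ does not depend on $x_0$, i.e. $\partial_{x_0}h_f=0$ identically. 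Here we use that $h_f$ is a nonzero form, because $f\notin\cC_{GN}$ when $V(f)$ is smooth, and that $h_f$ is reduced enough that the cone condition on $\cH_f$ translates into $\partial_{x_0}h_f=0$; see the remark below.

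Now apply Theorem \ref{THM:hessianinjacobians} with $i=j=0$:
$$x_0\partial_{x_0}h_f-(d-2)h_f\in (J_f)^{(n+1)(d-2)}.$$
Since $\partial_{x_0}h_f=0$, this gives $(d-2)h_f\in J_f$. Because $d\geq3$ and $\mathrm{char}\,\bK=0$, the factor $d-2$ is invertible, so $h_f\in J_f$. This contradicts the second item of Proposition \ref{PROP:soclejacobian}, which says that the class of $h_f$ generates the one-dimensional socle $R_f^{(n+1)(d-2)}$ and in particular is nonzero.

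The step needing care is the translation from ``$\cH_f$ is a cone with vertex $P_0$'' to ``$\partial_{x_0}h_f\equiv0$.'' If $h_f=\prod g_k^{m_k}$, the set-theoretic cone condition only says that each irreducible factor $g_k$ is independent of $x_0$, which forces $h_f$ itself to be independent of $x_0$. So the conclusion holds regardless of multiplicities, and the reducedness statement of Theorem \ref{THM:preliminaries} (stated only for cubics) is not needed. The argument works for $n\geq1$ and every $d\geq3$.

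A slightly more robust alternative avoids any assumption about the vertex beyond one point. Write $v=\sum v_jx_j$-direction, i.e. use $\sum_j v_j\partial_{x_j}h_f=0$, and contract the statement of Theorem \ref{THM:hessianinjacobians} appropriately. Multiplying by $x_i$ gives $\sum_j v_j x_i\partial_{x_j}h_f=0$, and Theorem \ref{THM:hessianinjacobians} shows this is congruent to $(d-2)v_ih_f$ modulo $J_f$. Choosing $i$ with $v_i\neq0$ yields $h_f\in J_f$ again, with the same contradiction. I would present this coordinate-free version, since it also makes the change-of-coordinates invariance unnecessary.
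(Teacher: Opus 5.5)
Your proof is correct and follows essentially the paper's argument: move the vertex to $P_0$ so that $\partial_{x_0}h_f\equiv 0$, apply Theorem \ref{THM:hessianinjacobians} with $i=j=0$ to get $h_f\in J_f$ (using that $d-2\neq 0$), and contradict Proposition \ref{PROP:soclejacobian}. Your additional points are valid refinements of that same argument rather than a different route: the check via irreducible factors that the set-theoretic cone condition forces $\partial_{x_0}h_f\equiv 0$ regardless of multiplicities, and the coordinate-free version that contracts $\sum_j v_j x_i\partial_{x_j}h_f$ and uses the theorem for all $i,j$.
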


\begin{proof}
    Let us assume by contradiction that $\cH_f$ is a cone. This means that, up to a change of coordinates, we have that $\partial_{x_0} h_f\equiv0$. By Theorem \ref{THM:hessianinjacobians}, we get that $h_f$ lies in the Jacobian ideal of $f$. By Proposition \ref{PROP:soclejacobian}, this is not possible in the case where the associated hypersurface $V(f)\subset\bP^n$ is smooth.
\end{proof}

\bibliographystyle{amsalpha}

\begin{thebibliography}{AAAAA11}

\bib{AGV}{book}{
   author={Arnold, V. I.},
   author={Gusein-Zade, S. M.},
   author={Varchenko, A. N.},
   title={Singularities of differentiable maps. Vol. I},
   series={Monographs in Mathematics},
   volume={82},
   publisher={Birkh\"{a}user Boston, Inc., Boston, MA},
   date={1985},
   pages={xi+382},
   isbn={0-8176-3187-9},
   doi={10.1007/978-1-4612-5154-5},
}


\bib{AR}{book}{
   author={Adler, A.},
   author={Ramanan, S.},
   title={Moduli of abelian varieties},
   series={Lecture Notes in Mathematics},
   volume={1644},
   publisher={Springer-Verlag, Berlin},
   date={1996},
   pages={vi+196},
   isbn={3-540-62023-0},
   doi={10.1007/BFb0093659},
}


\bib{Beo}{article}{
   author={Beorchia, V.},
   title={Generic injectivity of Hessian maps of ternary forms},
   journal={arXiv:2406.05423},
   volume={},
   date={2024},
   number={},
   pages={},
   issn={},
}




\bib{Wa20}{article}{
   author={de Bondt, M.},
   author={Watanabe, J.},
   title={On the theory of Gordan-Noether on homogeneous forms with zero
   Hessian (improved version)},
   conference={
      title={Polynomial rings and affine algebraic geometry},
   },
   book={
      series={Springer Proc. Math. Stat.},
      volume={319},
      publisher={Springer, Cham},
   },
   date={2020},
   pages={73--107},
   doi={10.1007/978-3-030-42136-6\_3},
}



\bib{BM}{article}{
   author={Boralevi, A.},
   author={Mezzetti, E.},
   title={Pencils of singular quadrics of constant rank and their orbits},
   journal={Rend. Istit. Mat. Univ. Trieste},
   volume={54},
   date={2022},
   pages={Art. No. 16, 22},
   issn={0049-4704},
   review={\MR{4595173}},
}


\bib{BF}{article}{
   author={Bricalli, D.},
   author={Favale, F. F.},
   title={Lefschetz properties for jacobian rings of cubic fourfolds and
   other Artinian algebras},
   journal={Collect. Math.},
   volume={75},
   date={2024},
   number={1},
   pages={149--169},
   issn={0010-0757},
   doi={10.1007/s13348-022-00382-5},
}



\bib{BFGre}{article}{
   author={Bricalli, D.},
   author={Favale, F. F.},
   title={Standard Artinian algebras and Lefschetz properties: a geometric
   approach},
   conference={
      title={Deformation of Artinian algebras and Jordan type},
   },
   book={
      series={Contemp. Math.},
      volume={805},
      publisher={Amer. Math. Soc., Providence, RI},
   },
   isbn={978-1-4704-7356-3},
   isbn={9781470476656},
   date={2024},
   pages={125--138},
   review={\MR{4802593}},
   doi={10.1090/conm/805/16130},
}


\bib{BFP}{article}{
   author={Bricalli, D.},
   author={Favale, F. F.},
   author={Pirola, G. P.},
   title={A theorem of Gordan and Noether via Gorenstein rings},
   journal={Selecta Math. (N.S.)},
   volume={29},
   date={2023},
   number={5},
   pages={Paper No. 74, 25},
   issn={1022-1824},
   doi={10.1007/s00029-023-00882-7},
}


\bib{BFP2}{article}{
   author={Bricalli, D.},
   author={Favale, F. F.},
   author={Pirola, G. P.},
   title={On the Hessian of cubic hypersurfaces},
   journal={Int. Math. Res. Not. IMRN},
   date={2024},
   number={10},
   pages={8672--8694},
   issn={1073-7928},
   review={\MR{4749181}},
   doi={10.1093/imrn/rnad324},
}



\bib{BFP3}{article}{
   author={Bricalli, D.},
   author={Favale, F. F.},
   author={Pirola, G. P.},
   title={On the irreducibility of Hessian loci of cubic hypersurfaces},
   journal={Forum Math. Sigma},
   volume={13},
   date={2025},
   pages={Paper No. e80, 36},
   doi={10.1017/fms.2025.36},
}

\bib{BBKT}{article}{
   author={Buczy\'{n}ska, W.},
   author={Buczy\'{n}ski, J.},
   author={Kleppe, J.},
   author={Teitler, Z.},
   title={Apolarity and direct sum decomposability of polynomials},
   journal={Michigan Math. J.},
   volume={64},
   date={2015},
   number={4},
   pages={675--719},
   issn={0026-2285},
   review={\MR{3426613}},
   doi={10.1307/mmj/1447878029},
}


\bib{CO}{article}{
   author={Ciliberto, C.},
   author={Ottaviani, G.},
   title={The Hessian map},
   journal={Int. Math. Res. Not. IMRN},
   date={2022},
   number={8},
   pages={5781--5817},
   issn={1073-7928},
   doi={10.1093/imrn/rnaa288},
}


\bib{COCD}{article}{
   author={Ciliberto, C.},
   author={Ottaviani, G.},
   author={Caro, J.},
   author={Duque-Rosero, J.}
   title={The general ternary form can be recovered by its Hessian},
   journal={arXiv:2406.05382},
   volume={},
   date={2024},
   number={},
   pages={},
   issn={},
}


\bib{DVG}{article}{
   author={Dardanelli, E.},
   author={van Geemen, B.},
   title={Hessians and the moduli space of cubic surfaces},
   conference={
      title={Algebraic geometry},
   },
   book={
      series={Contemp. Math.},
      volume={422},
      publisher={Amer. Math. Soc., Providence, RI},
   },
   date={2007},
   pages={17--36},
   doi={10.1090/conm/422/08054},
}

\bib{Dim}{article}{
   author={Dimca, A.},
   title={A geometric approach to the classification of pencils of quadrics},
   journal={Geom. Dedicata},
   volume={14},
   date={1983},
   number={2},
   pages={105--111},
   issn={0046-5755},
   review={\MR{708627}},
   doi={10.1007/BF00181618},
}



\bib{DGI}{article}{
   author={Dimca, A.},
   author={Gondim, R.},
   author={Ilardi, G.},
   title={Higher order Jacobians, Hessians and Milnor algebras},
   journal={Collect. Math.},
   volume={71},
   date={2020},
   number={3},
   pages={407--425},
   issn={0010-0757},
   doi={10.1007/s13348-019-00266-1},
}




\bib{Dol}{book}{
   author={Dolgachev, I. V.},
   title={Classical algebraic geometry},
   note={A modern view},
   publisher={Cambridge University Press, Cambridge},
   date={2012},
   pages={xii+639},
   isbn={978-1-107-01765-8},
   doi={10.1017/CBO9781139084437},
}

\bib{GR09}{article}{
   author={Garbagnati, A.},
   author={Repetto, F.},
   title={A geometrical approach to Gordan-Noether's and Franchetta's
   contributions to a question posed by Hesse},
   journal={Collect. Math.},
   volume={60},
   date={2009},
   number={1},
   pages={27--41},
   issn={0010-0757},
   doi={10.1007/BF03191214},
}

\bib{Fed}{article}{
   author={Fedorchuk, M.},
   title={Direct sum decomposability of polynomials and factorization of
   associated forms},
   journal={Proc. Lond. Math. Soc. (3)},
   volume={120},
   date={2020},
   number={3},
   pages={305--327},
   issn={0024-6115},
   review={\MR{4008372}},
   doi={10.1112/plms.12293},
}


\bib{GR}{article}{
   author={Gondim, R.},
   author={Russo, F.},
   title={On cubic hypersurfaces with vanishing hessian},
   journal={J. Pure Appl. Algebra},
   volume={219},
   date={2015},
   number={4},
   pages={779--806},
   issn={0022-4049},
   doi={10.1016/j.jpaa.2014.04.030},
}



\bib{GN}{article}{
   author={Gordan, P.},
   author={N\"{o}ther, M.},
   title={Ueber die algebraischen Formen, deren Hesse'sche Determinante
   identisch verschwindet},
   language={German},
   journal={Math. Ann.},
   volume={10},
   date={1876},
   number={4},
   pages={547--568},
   issn={0025-5831},
   doi={10.1007/BF01442264},
}




\bib{bookLef}{book}{
   author={Harima, T.},
   author={Maeno, T.},
   author={Morita, H.},
   author={Numata, Y.},
   author={Wachi, A.},
   author={Watanabe, J.},
   title={The Lefschetz properties},
   series={Lecture Notes in Mathematics},
   volume={2080},
   publisher={Springer, Heidelberg},
   date={2013},
   pages={xx+250},
   isbn={978-3-642-38205-5},
   isbn={978-3-642-38206-2},
   doi={10.1007/978-3-642-38206-2},
}

\bib{Hes59}{article}{
   author={Hesse, O.},
   title={Zur Theorie der ganzen homogenen Functionen},
   volume={56},
   publisher={J. Reine Angew. Math.},
   date={1859},
   pages={263--269}
}



\bib{Hut}{article}{
   author={Hutchinson, J. I.},
   title={The Hessian of the cubic surface. II},
   journal={Bull. Amer. Math. Soc.},
   volume={6},
   date={1900},
   number={8},
   pages={328--337},
   issn={0002-9904},
   doi={10.1090/S0002-9904-1900-00716-6},
}


\bib{Rus}{book}{
   author={Russo, F.},
   title={On the geometry of some special projective varieties},
   series={Lecture Notes of the Unione Matematica Italiana},
   volume={18},
   publisher={Springer, Cham; Unione Matematica Italiana, Bologna},
   date={2016},
   pages={xxvi+232},
   isbn={978-3-319-26764-7},
   isbn={978-3-319-26765-4},
   doi={10.1007/978-3-319-26765-4},
}



\bib{TS}{article}{
   author={Sebastiani, M.},
   author={Thom, R.},
   title={Un r\'{e}sultat sur la monodromie},
   language={French},
   journal={Invent. Math.},
   volume={13},
   date={1971},
   pages={90--96},
   issn={0020-9910},
   doi={10.1007/BF01390095},
}

\bib{Seg}{book}{
   author={Segre, B.},
   title={The Non-singular Cubic Surfaces},
   publisher={Oxford University Press, Oxford},
   date={1942},
   pages={xi+180},
}

\bib{Tho}{article}{
   author={Thom, R.},
   title={Probl\`emes rencontr\'{e}s dans mon parcours math\'{e}matique: un
   bilan},
   language={French},
   journal={Inst. Hautes \'{E}tudes Sci. Publ. Math.},
   number={70},
   date={1989},
   pages={199--214 (1990)},
   issn={0073-8301},
}




\end{thebibliography}

\end{document}